\newtheorem{theorem}{Theorem}[section]
\newtheorem{proposition}[theorem]{Proposition}
\newtheorem{corollary}[theorem]{Corollary}
\newtheorem{lemma}[theorem]{Lemma}
\newtheorem{definition}[theorem]{Definition}
\newtheorem{remark}[theorem]{Remark}
\def\bN{\mathbb N_0}
\def\bR{{\mathbb R}}
\def\bC{{\mathbb C}}
\def\F2{{\mathbb F}_2}
\def\bsx{{\boldsymbol{x}}}
\def\bsk{{\boldsymbol{k}}}
\newcommand{\nat}{\mathbb{N}_0}
\newcommand{\wal}{\mathrm{wal}}
\newcommand{\omebar}{\overline{\omega}_b}
\newcommand{\W}[2]{W(#1)(#2)}
\newcommand{\I}[1]{I(#1)}
\newcommand{\Wnorm}[1]{W(#1)(\cdot)}
\newcommand{\Wnormof}[2]{\| \Wnorm{#1} \|_{L^{#2}}}
\newcommand{\Wextra}[3]{W_{#1}(#2)(#3)}
\newcommand{\Iextra}[2]{I_{#1}(#2)}
\newcommand{\Wextranorm}[2]{W_{#1}(#2)(\cdot)}
\newcommand{\Wextranormof}[3]{\|\Wextranorm{#1}{#2}\|_{L^{#3}}}
\newcommand{\summore}[1]{k^{#1}_{>}}
\newcommand{\sumless}[1]{k^{#1}_{\leq}}
\newcommand{\summorej}[1]{{k_j}^{#1}_{>}}
\newcommand{\sumlessj}[1]{{k_j}^{#1}_{\leq}}
\newcommand{\mb}{m_b}
\newcommand{\Mb}{M_b}
\newcommand{\Cv}[1][v]{\frac{b\mb}{b-\Mb}\left( 1-\left(\frac{\Mb}{b}\right)^{#1} \right)}
\newcommand{\Sob}{\mathcal{H}_\alpha}
\newcommand{\Sobnorm}[1]{\|#1\|_{\mathrm{Sob, \alpha}}}
\newcommand{\innerprod}[2]{\langle #1, #2 \rangle_\alpha}
\newcommand{\muper}[2]{\mu_{#1, \mathrm{per}}(#2)}
\newcommand{\Sobper}{\mathcal{H}_{\alpha, \mathrm{per}}}
\newcommand{\innerprodper}[2]{\langle #1, #2 \rangle_{\alpha, \mathrm{per}}}
\newcommand{\antiW}[3]{\overline{\wal_{#2}^{[#1]}(#3)}}
\newenvironment{enuroman}{\begin{enumerate}[\normalfont (i)]}{\end{enumerate}}
\begin{document}

\begin{frontmatter}
\title{Formulas for the Walsh coefficients of smooth functions
and their application to bounds on
the Walsh coefficients 
}
\author[Tokyo]{Kosuke Suzuki\corref{cor1}\fnref{fn1}}
\ead{ksuzuki@ms.u-tokyo.ac.jp}
\fntext[fn1]{JSPS research fellow, phone: +81-3-5465-8283}

\author[Tokyo]{Takehito Yoshiki}
\ead{yosiki@ms.u-tokyo.ac.jp}

\address[Tokyo]{Graduate School of Mathematical Sciences, The University of Tokyo, 3-8-1 Komaba, Meguro-ku, Tokyo 153-8914 Japan}
\cortext[cor1]{Corresponding author}



\begin{abstract}
We establish formulas for the $b$-adic Walsh coefficients of
functions in $C^\alpha[0,1]$ for an integer $\alpha \geq 1$
and give upper bounds on the Walsh coefficients of these functions.
We also study the Walsh coefficients of
periodic and non-periodic functions in reproducing kernel Hilbert spaces.
\end{abstract}

\begin{keyword}
Walsh series, Walsh coefficient, Sobolev space, smooth function
\end{keyword}

\end{frontmatter}

\section{Introduction}\label{sec:Introduction}
The Walsh coefficients of a function are the generalized Fourier coefficients for the Walsh system,
which is a normal orthogonal system.
It is often used instead of the trigonometric Fourier system for analyzing numerical integration
\cite{Dick2010dna}, approximation \cite{Dick2008afu, Baldeaux2009asf}
and constructing low-discrepancy point sets \cite{Dick2014dbi, Dick2014odb},
especially when we consider point sets so-called digital nets,
which have the suitable group structure for the Walsh system. 
In particular, the decay of the Walsh coefficients of smooth functions is fundamental
to analyze such problems for spaces of smooth functions.
For example, It is used to give explicit constructions of quasi-Monte Carlo rules
which achieve the optimal rate of convergence for smooth functions in \cite{Dick2007ecq, Dick2008wsc}
and algorithm to approximate functions in Sobolev spaces \cite{Baldeaux2009asf}.
In this paper, we develop the theory of the decay of the Walsh coefficients of smooth functions.

Throughout the paper we use the following notation:
Assume that $b \geq 2$ is a positive integer.
We assume that $k$ is a nonnegative integer whose $b$-adic expansion is
$k = \kappa_1b^{a_1-1} + \cdots + \kappa_v b^{a_v - 1}$
where $\kappa_i$ and $a_i$ are integers with $0 < \kappa_i \leq b-1$, $a_1 > \cdots > a_v \geq 1$.
For $k=0$ we assume that $v=0$ and $a_0=0$.
We denote by $\bN$ the set of nonnegative integers.
Let $\omega_b := \exp(2\pi \sqrt{-1}/b)$.

The Walsh functions were first introduced by Walsh \cite{Walsh1923csn},
see also \cite{Fine1949wf, Chrestenson1955cgw}.
For $k \in \bN$, the $b$-adic $k$-th Walsh function $\wal_{k}(\cdot)$
is defined as
$$
\wal_k(x) := \omega_b^{\sum_{i=1}^{v} \kappa_i \xi_{a_i}},
$$
for $x\in [0,1)$ whose $b$-adic expansion is given by
$x = \xi_1b^{-1} + \xi_2b^{-2} + \cdots$,
which is unique in the sense that infinitely many of the digits $\xi_i$ are different from $b-1$.
We also consider $s$-dimensional Walsh functions.
For $\bsk = (k_1, \dots, k_s) \in \bN^s$ and $\bsx = (x_1, \dots, x_s) \in [0,1)^s$, 
the $b$-adic $\bsk$-th Walsh function $\wal_{\bsk}(\cdot)$
is defined as
$$
\wal_{\bsk}(\bsx) := \prod_{j=1}^s \wal_{k_j}(x_j).
$$
For $\bsk \in \bN^s$ and $f \colon [0,1)^s \to \bC$,
we define the $\bsk$-th Walsh coefficient of $f$ as
  \begin{align*}
     \widehat{f}(\bsk) := \int_{[0,1)^s} f(\bsx)\overline{\wal_{\bsk}(\bsx)} \, d\bsx.
  \end{align*}
It is well-known that
the Walsh system $\{\wal_{\bsk}(\cdot) \mid \bsk\in \nat^s\}$ is a complete orthonormal system in $L^2[0,1)^s$ for any positive integer $s$
(for a proof, see e.g., \cite[Theorem A.11]{Dick2010dna}).
Hence we have a Walsh series expansion
\begin{align*}
f(\bsx) \sim \sum_{\bsk\in \nat^s}\widehat{f}(\bsk)\wal_{\bsk}(\bsx)
\end{align*}
for any $f\in L^2[0,1)^s$.
Let $s=1$ at this moment.
It is known that if $f \in C^0[0,1]$ has bounded variation then $f$ is equal to its Walsh series expansion,
that is, for all $x \in [0,1)$ we have
$f(x) = \sum_{k \in \nat}\widehat{f}(k)\wal_{k}(x)$,
see \cite{Walsh1923csn}.
More information on Walsh analysis can be found in the books
\cite{Schipp1990ws, Golubov1991wsa}.



There are several studies on the decay of the Walsh coefficients.
Fine considered
the Walsh coefficients of functions which satisfy a H\"{o}lder condition in \cite{Fine1949wf}.
Dick studied \cite{Dick2007ecq, Dick2008wsc} the decay of the Walsh coefficients of functions of smoothness 
$\alpha \geq 1$ and in more detail in \cite{Dick2009dwc}:
It was proved that if a function $f$ has $\alpha-1$ derivatives
for which $f^{(\alpha-1)}$ satisfies a Lipschitz condition,
then $|\widehat{f}(k)| \leq C b^{-\mu_\alpha(k)}$ for all $k$,
where $C$ is a positive real number which is independent of $k$
and $\mu_\alpha(k) := a_1 + \dots + a_{\min(\alpha,v)}$.
Dick also proved that this order is the best possible.
That is,
for $f$ of smoothness $\alpha$, if there exists $1 \leq r \leq \alpha$
such that $\widehat{f}(k)$ decays faster than $b^{-a_1 - \dots - a_r}$ for all $k \in \bN$ and $v \geq r$,
then $f$ is a polynomial of degree at most $r-1$
\cite[Theorem~20]{Dick2009dwc}.

Recently, Yoshiki gave a method to analyze the dyadic (i.e., 2-adic) Walsh coefficients
in \cite{Yoshiki}.
He introduced dyadic differences of (maybe discontinuous) functions
and gave a formula in which the dyadic Walsh coefficients are
given by dyadic differences multiplied by constants.
Dyadic differences of a smooth function are expressed
in terms of derivatives of the function.
This enabled him to establish a formula for the dyadic Walsh coefficients
of smooth functions expressed in terms of those derivatives as
\begin{equation}\label{eq:formula-intro}
\widehat{f}(k) = (-1)^v \int_0^1 f^{(v)}(x) \W{k}{x} \, dx,
\end{equation}
where $\Wnorm{k} \colon [0,1] \to \bC$
is given by the iterated integral of functions derived from dyadic differences
(note that the notation $\Wnorm{k}$ coincides with that in \cite{Yoshiki} up to constant multiple).
From this formula,
he obtained a bound on the dyadic Walsh coefficients for $\alpha$ times continuously differentiable functions for $\alpha \geq 1$.

In this paper, we first generalize \eqref{eq:formula-intro} for the $b$-adic Walsh coefficients
in Theorems~\ref{thm:Walsh_formula} and \ref{thm:Walsh_formula-multi}.
Although our generalization focuses only on smooth functions,
it can treat the $b$-adic case which is not included in \cite{Yoshiki}.
Our proof technique is completely different from that in \cite{Yoshiki}.
We prove that $\Wnorm{k}$ is also given by the following two forms:
the $v$-th anti-derivative of $\overline{\wal_{k}(\cdot)}$
and
the iterated integral of Walsh functions starting from the highest frequency
and adding in lower frequencies at each subsequent step as in Definition~\ref{def:W}.
Then, using the latter form of $\Wnorm{k}$, we give bounds on the $b$-adic Walsh coefficients
for $\alpha$ times continuously differentiable functions as
\begin{align}\label{eq:bound-smooth-intro}
|\widehat{f}(k)|
\leq c(\alpha,v,b) \|f^{(\min(\alpha, v))}\|_{L^1}\frac{b^{-\mu_\alpha(k)}}{\mb^{\min(\alpha,v)}},
\end{align}
where $\mb := 2\sin(\pi/b)$ and
$c(\alpha,v,b)$ is an explicit positive constant, see Theorem~\ref{thm:Walsh-bound-normal}.
We note that $c(\alpha,v,b)$ is bounded with respect to $\alpha$ and $v$.
In the dyadic case, we also have similar bounds in terms of the $L^p$-norm of the derivatives of $f$
instead of the $L^1$-norm and this matches the bound in \cite{Yoshiki}.
This bound is extended to the multivariate case in Theorem~\ref{thm:Walsh-bound-normal-s}
and the case $\alpha$ is infinity in Corollary~\ref{cor:Walsh-bound-infty}.


Furthermore, we give improved bounds on the $b$-adic Walsh coefficients
for periodic and non-periodic functions in the Sobolev spaces $\Sobper$ and $\Sob$
which are considered in \cite{Dick2009dwc}.
In \cite{Dick2009dwc},
Dick gave bounds on the Walsh coefficients of a polynomial $b_r(\cdot)$,
which is equal to the Bernoulli polynomial $B_r(\cdot)$ up to constant multiple, by
\begin{align}\label{eq:bound-ber-intro}
|\widehat{b_r}(k)|
\leq C_{\mathrm{Ber}}(r,v,b) \frac{b^{-\muper{r}{k}}}{\mb^r},
\end{align}
where $C_{\mathrm{Ber}}(r,v,b)$ is a positive constant and
$\muper{r}{k}$ is defined as in \eqref{eq:def-muper}.
Using this, he obtained a bound on the Walsh coefficients for $f \in \Sob$ by
\begin{align}\label{eq:bound-Sob-intro}
|\widehat{f}(k)| \leq b^{-\mu_{\alpha}(k)} C_{b, \alpha, q} \|f\|_{p, \alpha},
\end{align}
where $1 \leq p, q \leq \infty$ are real numbers with $1/p + 1/q =1$,
$\|f\|_{p, \alpha}$ is a norm related to $\Sob$ and $C_{b, \alpha, q}$ is a positive constant,
see \cite[Corollary~14.22]{Dick2010dna}.

In this paper, we improve the constants $C_{\mathrm{Ber}}(r,v,b)$ in \eqref{eq:bound-ber-intro}
and $C_{b, \alpha, q}$ in \eqref{eq:bound-Sob-intro} in many cases,
see Theorem~\ref{thm;b_r(k)} and Corollary~\ref{cor:bound-Sob-with-norm}, respectively.
This is done by using another formula for the Walsh coefficients
where higher derivatives $f^{(i)}$ for $i \geq v$ appear which is established in Theorem~\ref{thm:extra_exp}.
Although $C_{\mathrm{Ber}}(r,v,b)$ in \cite{Dick2009dwc} depends exponentially on $r$,
our constant is independent of $r$ and
bounded with respect to $v$,
Further, if $b=2$, our constant is strictly better for all $r$ and $v$.
Hence, in many cases, our constant is better.
Using this improved bound, we give a new constant for \eqref{eq:bound-Sob-intro}.
Again our constant is improved in many cases including the case $b=2$,
where it is strictly better for all $\alpha$.
We also have similar improvement for $\Sobper$, see Theorem~\ref{thm:bound-Sobper}.


The rest of the paper is organized as follows.
We give two formulas for the Walsh coefficients of smooth functions
in Sections~\ref{sec:formula} and \ref{sec:Another-formula}.
Bounds on the Walsh coefficients of smooth functions and Bernoulli polynomials
are given in Sections~\ref{sec:coeff-of-smooth} and \ref{sec:coeff-of-Bernoulli},
respectively.
In Section~\ref{sec:Sobolev} (resp.\ Section~\ref{sec:periodic}),
we give a bound on the Walsh coefficients of functions in 
non-periodic (resp.\ periodic) reproducing kernel Hilbert spaces.

\section{Integral formula for the Walsh coefficients of smooth functions}\label{sec:formula}
We introduce further notation which is used throughout the paper.
For $k>0$, let $k' = k - \kappa_vb^{a_v-1}$
(this notation is different from that in \cite{Dick2009dwc}.
we remove the smallest term from the expansion of $k$).
Let $v(k) := v$ be the Hamming weight of $k$ with respect to its $b$-adic expansion,
i.e., the number of non-zero digits of $k$.

In this section, we define the function $\Wnorm{k}$
and establish a formula in which the Walsh coefficients of smooth functions
are expressed in terms of $\Wnorm{k}$ and derivatives of the functions.

\begin{definition}\label{def:W}
For $k \in \bN$, we define functions
$\Wnorm{k} \colon [0,1] \to \bC$ recursively as
\begin{align*}
\W{0}{x} &:= 1,\\
\W{k}{x} &:= \int_0^x \overline{\wal_{\kappa_v b^{a_v- 1}}(y)} \W{k'}{y}\, dy,
\end{align*}
and the integral value of $\Wnorm{k}$ as
$$
\I{k} := \int_0^1 \W{k}{x} \, dx.
$$
\end{definition}

By definition, $\Wnorm{k}$ is continuous for all $k \in \bN$.
Note that we have
$$
\W{k}{x} = \int_0^x \W{k'}{y}\, dy \qquad \text{for $x \in [0, b^{-a_v}]$}
$$
since we have $\wal_{\kappa_v b^{a_v- 1}}(y) = 1$ for all $y \in [0, b^{-a_v})$.
We show the periodicity of $\Wnorm{k}$ in the next lemma.

\begin{lemma}\label{lem:Wperiod}
Let $k \in \bN$.
Let $x \in [0,1)$ and
$x = cb^{-a_v} + x'$, where $0 \leq c < b^{a_v}$ is an integer
and $0 \leq x' < b^{-a_v}$ is a real number.
Then we have
$$
\W{k}{x} = \frac{1-\omebar^{c\kappa_v}}{1-\omebar^{\kappa_v}} \W{k}{b^{-a_v}}
+ \omebar^{c\kappa_v}\W{k}{x'}.
$$
In particular, $\Wnorm{k}$ is a periodic function with period $b^{-a_v + 1}$ if $v>0$.
\end{lemma}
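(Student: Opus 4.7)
My plan is to prove the integral formula by induction on the Hamming weight $v$ of $k$, and then to deduce the periodicity statement as an immediate consequence of the formula. The base case $v=0$ is vacuous since $W(0) \equiv 1$. The inductive engine relies on two features of the Walsh function appearing in the defining integral: namely, $\overline{\wal_{\kappa_v b^{a_v-1}}(y)}$ depends only on the $a_v$-th $b$-adic digit of $y$, hence is constant, equal to $\overline{\omega}_b^{\kappa_v j}$, on each subinterval $[jb^{-a_v}, (j+1)b^{-a_v})$.

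For the inductive step I would split the integral defining $W(k)(x)$ at the points $jb^{-a_v}$ for $j=0,\dots,c$, producing $c$ full subintervals of length $b^{-a_v}$ plus a final partial interval of length $x'$. On each full piece the Walsh factor is the root of unity $\overline{\omega}_b^{\kappa_v j}$, which factors out and leaves $\int_{jb^{-a_v}}^{(j+1)b^{-a_v}} W(k')(y)\,dy$. The crucial point is that by the inductive hypothesis $W(k')$ is $b^{-a_{v-1}+1}$-periodic (or constantly $1$ if $v=1$), and since the exponents of $k$ satisfy $a_v \leq a_{v-1}-1$ the length $b^{-a_v}$ is a positive integer multiple of $b^{-a_{v-1}+1}$; consequently $W(k')$ is automatically $b^{-a_v}$-periodic. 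Each such integral therefore collapses to $\int_0^{b^{-a_v}} W(k')(y)\,dy$, which in turn equals $W(k)(b^{-a_v})$ because $\wal_{\kappa_v b^{a_v-1}} \equiv 1$ on $[0, b^{-a_v})$. Summing the finite geometric series $\sum_{j=0}^{c-1} \overline{\omega}_b^{\kappa_v j}$, which is well defined because $1 \leq \kappa_v \leq b-1$ forces $\overline{\omega}_b^{\kappa_v} \neq 1$, produces the first term of the claimed formula. The partial interval contributes $\overline{\omega}_b^{\kappa_v c} \int_0^{x'} W(k')(z)\,dz$ after the substitution $y = cb^{-a_v}+z$ and one more application of periodicity, and since $x' < b^{-a_v}$ this integral equals $W(k)(x')$.

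Periodicity then follows at once: replacing $x$ by $x + b^{-a_v+1}$ replaces $c$ by $c+b$ while leaving $x'$ fixed, and the identity $\overline{\omega}_b^{\kappa_v b} = 1$ shows that both summands in the formula are invariant. I do not anticipate a serious obstacle, as the proof is structural bookkeeping once the two pointwise facts above are in place; the subtlest point to track carefully is the compatibility of the two scales $b^{-a_v}$ and $b^{-a_{v-1}+1}$ via the strict decrease $a_v < a_{v-1}$ of the exponents, which is exactly what lets the inductive hypothesis on $W(k')$ be applied on intervals of length $b^{-a_v}$.
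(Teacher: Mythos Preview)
Your proposal is correct and follows essentially the same approach as the paper: induction on $v$, splitting the defining integral of $W(k)$ at the points $jb^{-a_v}$, factoring out the constant Walsh value on each subinterval, using the inductive periodicity of $W(k')$ to identify each full integral with $W(k)(b^{-a_v})$, and summing the geometric series. Your explicit observation that $a_v \le a_{v-1}-1$ makes $b^{-a_v}$ an integer multiple of the period $b^{-a_{v-1}+1}$ of $W(k')$ is precisely the compatibility the paper uses (stated there as ``in particular with period $b^{-a_v}$''), and the periodicity conclusion via $c\mapsto c+b$ is identical.
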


\begin{proof}
We prove the lemma by induction on $v$.
If $v=0$, trivially the result holds since for $x \in [0,1)$ we always have that $c=0$ and $x' = x$.
Hence we now assume that the claim holds for $v-1$.
Since $v(k') = v-1$, we can apply the induction assumption for $v-1$ to $\Wnorm{k'}$.
Hence $\Wnorm{k'}$ is periodic with period $b^{-a_{v-1}+1}$
and in particular with period $b^{-a_v}$ if $v>1$,
and $\Wnorm{k'}$ is constant if $v=1$.
Hence we have
\begin{align*}
\W{k}{x} &= \sum_{i=0}^{c-1} \int_{ib^{-a_v}}^{(i+1)b^{-a_v}} \overline{\wal_{\kappa_v b^{a_v -1}}(y)} \W{k'}{y}\, dy\\
&\qquad + \int_{cb^{-a_v}}^{cb^{-a_v} + x'} \overline{\wal_{\kappa_v b^{a_v -1}}(y)} \W{k'}{y}\, dy\\
&= \sum_{i=0}^{c-1} \omebar^{i \kappa_v} \int_0^{b^{-a_v}} \W{k'}{y}\, dy
 + \omebar^{c \kappa_v} \int_0^{x'} \W{k'}{y}\, dy\\
&= \frac{1-\omebar^{c\kappa_v}}{1-\omebar^{\kappa_v}} \W{k}{b^{-a_v}}
+ \omebar^{c\kappa_v}\W{k}{x'},
\end{align*}
which proves the first claim for $v$.
Further, the most right-hand side of the above does not change if one changes $c$ to $c+b$.
Hence $\Wnorm{k}$ is periodic with period $b^{-a_v+1}$, which proves the second claim for $v$.
\end{proof}

We also need a lemma which gives anti-derivatives of $\wal_{k}(\cdot)$.
For $n, k \in \bN$, we define two symbols
$\summore{n}$ and $\sumless{n}$ as
$\summore{n} := \sum_{i = n+1}^v \kappa_i b^{a_i - 1}$ and
$\sumless{n} := \sum_{i = 1}^{\min(n, v)} \kappa_i b^{a_i - 1}$, respectively.
Note that $\sumless{n} + \summore{n} = k$.
\begin{lemma}\label{lem:anti-derivatives-of-W}
Let $k \in \bN$. For $n \in \bN$, define functions $\antiW{n}{k}{\cdot} \colon [0,1] \to \bC$ 
recursively as
$\antiW{0}{k}{x} := \overline{\wal_{k}(x)}$ and $\antiW{n}{k}{x} := \int_0^x \antiW{n-1}{k}{y} \, dy$.
Then for all $0 \leq n \leq v$ we have
\[
\antiW{n}{k}{x} = \overline{\wal_{\summore{n}}(x)} \W{\sumless{n}}{x}  \quad \text{for all $x \in [0,1)$}.
\]
Further, $\antiW{n}{k}{1} = 0$ for all $1 \leq n \leq v$.
\end{lemma}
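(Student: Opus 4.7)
The plan is to prove the formula by induction on $n$, then derive $\antiW{n}{k}{1}=0$ by a short limit argument. The base case $n=0$ is immediate: since $\summore{0}=k$, $\sumless{0}=0$, and $W(0)\equiv 1$, both sides equal $\overline{\wal_k(x)}$.

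For the inductive step with $1 \leq n \leq v$, I would set
\[
F(x) := \overline{\wal_{\summore{n}}(x)}\, W(\sumless{n})(x)
\]
and verify that $F$ agrees with $\antiW{n}{k}{\cdot}$ by checking three properties: (i) $F(0)=0$, which holds because $W(\sumless{n})(0)=0$; (ii) $F$ is continuous on $[0,1)$; (iii) $F'(x) = \antiW{n-1}{k}{x}$ at every $x$ outside a finite set. These together force $F$ to be the antiderivative of $\antiW{n-1}{k}{\cdot}$ vanishing at $0$, so $F = \antiW{n}{k}{\cdot}$ by the fundamental theorem of calculus for absolutely continuous functions. Property (iii) is a direct product-rule computation on the open set where $\overline{\wal_{\summore{n}}(x)}$ is locally constant: combined with the recursive formula $W(\sumless{n})'(x)=\overline{\wal_{\kappa_n b^{a_n-1}}(x)}W(\sumless{n-1})(x)$ and the multiplicative identity $\wal_{\summore{n}}(x)\wal_{\kappa_n b^{a_n-1}}(x) = \wal_{\summore{n-1}}(x)$ (which holds because the two indices have disjoint $b$-adic supports), one lands on $\overline{\wal_{\summore{n-1}}(x)}W(\sumless{n-1})(x)$, matching the inductive hypothesis.

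The main obstacle is property (ii): $\overline{\wal_{\summore{n}}}$ is piecewise constant with jumps at every multiple of $b^{-a_{n+1}}$ (for $n<v$; nothing to check if $n=v$), and one must rule out jumps of $F$ at those points. The key observation is that Lemma~\ref{lem:Wperiod} applied to $\sumless{n}$, whose smallest $b$-adic digit sits at position $a_n$, shows that $W(\sumless{n})$ is periodic with period $b^{-a_n+1}$. Since $a_n \geq a_{n+1}+1$, this period divides $b^{-a_{n+1}}$, so
\[
W(\sumless{n})(c b^{-a_{n+1}}) = W(\sumless{n})(0) = 0
\]
for every integer $c$. Thus the continuous factor $W(\sumless{n})$ vanishes at precisely the jump points of the piecewise-constant factor $\overline{\wal_{\summore{n}}}$, making $F$ continuous.

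For the final claim $\antiW{n}{k}{1}=0$, I would exploit continuity of $\antiW{n}{k}{\cdot}$ and pass to $x \to 1^-$ in the formula just proved. For $x$ sufficiently close to $1$, $\overline{\wal_{\summore{n}}(x)}$ is a fixed constant (all relevant digits are $b-1$), so it suffices to show $W(\sumless{n})(x)\to 0$. Writing $1-\epsilon = (b^{a_n}-1)b^{-a_n} + (b^{-a_n}-\epsilon)$ and applying Lemma~\ref{lem:Wperiod} to $\sumless{n}$, the identity $\omebar^{b^{a_n}\kappa_n}=1$ reduces $\omebar^{(b^{a_n}-1)\kappa_n}$ to $\omebar^{-\kappa_n}$ and simplifies $(1-\omebar^{-\kappa_n})/(1-\omebar^{\kappa_n})$ to $-\omebar^{-\kappa_n}$, so the two terms collapse to $-\omebar^{-\kappa_n}W(\sumless{n})(b^{-a_n}) + \omebar^{-\kappa_n}W(\sumless{n})(b^{-a_n}) = 0$, as required.
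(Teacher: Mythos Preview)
Your proof is correct, and it takes a genuinely different route from the paper's. The paper computes the integral $\int_0^x \overline{\wal_{\summore{n-1}}(y)}\,W(\sumless{n-1})(y)\,dy$ directly: it writes $x = cb^{-a_n+1} + x'$, splits the integral into $c$ full blocks plus a remainder, and uses Lemma~\ref{lem:Wperiod} to see that $W(\sumless{n})$ vanishes at each block endpoint $ib^{-a_n+1}$, so the full-block contributions telescope away and only the remainder survives. You instead verify the identity by differentiation: you check that the target product $F$ is continuous (because $W(\sumless{n})$ vanishes at the jump points of the Walsh factor, via the same periodicity lemma), compute $F'$ on the open set where the Walsh factor is locally constant, and invoke the fundamental theorem of calculus. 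Both arguments pivot on exactly the same consequence of Lemma~\ref{lem:Wperiod}, namely $W(\sumless{n})(ib^{-a_n+1})=0$; the paper uses it to kill boundary terms in a telescoping sum, you use it to glue a piecewise-$C^1$ function into a continuous (hence absolutely continuous) one. Your approach is slightly more conceptual, while the paper's forward computation has the advantage of yielding $\antiW{n}{k}{1}=0$ for free by setting $c=b^{a_n-1}$, $x'=0$ in the same calculation, avoiding your separate limit argument (which is nonetheless correct). One small point worth making explicit in your write-up: state that $F$ is absolutely continuous (piecewise $C^1$ and globally continuous), since continuity and a.e.\ differentiability alone do not suffice for the FTC step.
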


\begin{proof}
It suffices to show that for all integer $1 \leq n \leq v$
\[
\int_0^x \overline{\wal_{\summore{n-1}}(y)} \W{\sumless{n-1}}{y} \,dy
= \overline{\wal_{\summore{n}}(x)} \W{\sumless{n}}{x} \quad \text{for all $x \in [0,1)$}
\]
and that $\antiW{n}{k}{1} = 0$ for all $1 \leq n \leq v$.

We show the first claim.
Let $1 \leq n \leq v$ an integer, $x \in [0,1)$ and
$x = cb^{-a_n+1} + x'$, where $0 \leq c < b^{a_n-1}$ is an integer
and $0 \leq x' < b^{-a_n+1}$ is a real number.
Note that we have
$
\wal_{\summore{n-1}}(y)
= \wal_{\summore{n}}(y) \wal_{\kappa_n b^{a_n - 1}}(y)
$
for all $y \in [0,1)$ and
$$
\wal_{\summore{n}}(y) = \wal_{\summore{n}}(ib^{-a_n+1})
\text{\qquad for $y \in [ib^{-a_n+1}, (i+1)b^{-a_n+1})$}
$$ 
for each integer $0 \leq i < b^{a_n-1}$.
Hence we have
\begin{align*}
&\int_0^x \overline{\wal_{\summore{n-1}}(y)} \W{\sumless{n-1}}{y} \,dy\\
&= \sum_{i=0}^{c-1}\overline{\wal_{\summore{n}}(ib^{-a_n+1})}\int_{ib^{-a_n+1}}^{(i+1)b^{-a_n+1}} \overline{\wal_{\kappa_n b^{a_n - 1}}(y)}\W{\sumless{n-1}}{y} \, dy\\
&\quad + \overline{\wal_{\summore{n}}(cb^{-a_n+1})}\int_{cb^{-a_n+1}}^{x} \overline{\wal_{\kappa_n b^{a_n - 1}}(y)}\W{\sumless{n-1}}{y} \, dy\\
&= \sum_{i=0}^{c-1}\overline{\wal_{\summore{n}}(ib^{-a_n+1})} \left[\W{\sumless{n}}{y}\right]_{ib^{-a_n+1}}^{(i+1)b^{-a_n+1}} + \overline{\wal_{\summore{n}}(x)} \left[\W{\sumless{n}}{y}\right]_{cb^{-a_n+1}}^{x},
\end{align*}
where we use $\wal_{\summore{n}}(cb^{-a_n+1}) = \wal_{\summore{n}}(x)$ in the last equality.
The first term of the most right-hand side of the above is equal to zero
since $\W{\sumless{n}}{ib^{-a_n+1}} = \W{\sumless{n}}{(i+1)b^{-a_n+1}} = 0$
by Lemma~\ref{lem:Wperiod}.
Similarly, the second term is equal to $\overline{\wal_{\summore{n}}(x)}\W{\sumless{n}}{x}$.
This proves the result for $x \in [0,1)$.

We now show the second claim. Considering the above calculation for $c = b^{a_n-1}$ and $x' = 0$,
we obtain $\antiW{n}{k}{1} = 0$ for all $1 \leq n \leq v$.
\end{proof}

Now, by using integral-by-parts and  Lemma~\ref{lem:anti-derivatives-of-W} iteratively, it is easy to show the following formula for the Walsh coefficients.

\begin{theorem} \label{thm:Walsh_formula}
Let $k \in \bN$.
Assume that $f \in C^\alpha[0,1]$ for a positive integer $\alpha$.
Then for an integer $0 \leq n \le \min(\alpha, v)$ we have
\begin{align*}
\widehat{f}(k)
=  (-1)^{n} \int_0^1 f^{(n)}(x) \antiW{n}{k}{x} \, dx
= (-1)^{n} \int_0^1 f^{(n)}(x) \overline{\wal_{\summore{n}}(x)} \W{\sumless{n}}{x} \, dx.
\end{align*}
\end{theorem}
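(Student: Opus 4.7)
The plan is to prove the two equalities separately: the second equality is just Lemma~\ref{lem:anti-derivatives-of-W}, so the real content is the first. For that I would induct on $n$ from $0$ up to $\min(\alpha,v)$, using integration by parts at each step.

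For the base case $n=0$, the equality
\[
\widehat{f}(k) = \int_0^1 f(x)\,\overline{\wal_k(x)}\,dx = \int_0^1 f(x)\,\antiW{0}{k}{x}\,dx
\]
is simply the definition of the Walsh coefficient together with the definition $\antiW{0}{k}{x} = \overline{\wal_k(x)}$.

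For the inductive step, assume the formula holds for some $n-1$ with $1 \leq n \leq \min(\alpha,v)$. Since $\antiW{n}{k}{x}$ is by definition an antiderivative of $\antiW{n-1}{k}{x}$ vanishing at $x=0$, I would integrate by parts:
\[
(-1)^{n-1}\!\int_0^1 f^{(n-1)}(x)\,\antiW{n-1}{k}{x}\,dx
= (-1)^{n-1}\Bigl[f^{(n-1)}(x)\,\antiW{n}{k}{x}\Bigr]_0^1 + (-1)^n\!\int_0^1 f^{(n)}(x)\,\antiW{n}{k}{x}\,dx.
\]
The boundary term at $x=0$ vanishes because $\antiW{n}{k}{0}=0$ from its definition as an integral from $0$, and the boundary term at $x=1$ vanishes by the second claim of Lemma~\ref{lem:anti-derivatives-of-W}, which gives $\antiW{n}{k}{1}=0$ for $1 \leq n \leq v$. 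This is where the condition $n \leq v$ is used; the condition $n \leq \alpha$ is needed so that $f^{(n)}$ exists and the integration by parts is justified on the continuously differentiable side.

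There isn't really a hard step here; everything has been set up by the previous two lemmas. The one thing to be slightly careful about is making sure the induction hypothesis can be applied: we need $n-1 \leq \min(\alpha,v)$, which follows from $n \leq \min(\alpha,v)$, and we need the boundary vanishing at step $n$, which requires $n \geq 1$ (the $n=0$ case uses no boundary vanishing and is handled as the base case). Combining the induction with Lemma~\ref{lem:anti-derivatives-of-W} then yields both equalities in the statement.
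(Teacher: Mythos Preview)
Your proof is correct and follows exactly the approach the paper indicates: the paper itself merely states that the theorem follows ``by using integral-by-parts and Lemma~\ref{lem:anti-derivatives-of-W} iteratively,'' and your induction on $n$ with the boundary terms handled via $\antiW{n}{k}{0}=0$ and $\antiW{n}{k}{1}=0$ is precisely that argument spelled out.
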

We remark that Theorem~\ref{thm:Walsh_formula} for $n = v$ gives Formula \eqref{eq:formula-intro}
announced in Introduction.


Now we consider the $s$-variate case.
For a function $f \colon [0,1)^s \to \bR$,
let $f^{(n_1, \dots, n_s)} := (\partial/\partial x_1)^{n_1}\cdots(\partial/\partial x_s)^{n_s}f$  
be the $(n_1, \dots, n_s)$-th derivative of $f$.
Considering coordinate-wise integration, we have the following.
\begin{theorem}\label{thm:Walsh_formula-multi}
Let $\bsk = (k_1, \dots, k_s) \in \bN^s$.
Assume that $f \colon [0,1]^s \to \bR$
has continuous mixed partial derivatives up to order $\alpha_j$ in each variable $x_j$.
Let $n_j$ be integers with
$0 \leq n_j \leq \min(\alpha_j, v(k_j))$ for $1 \leq j \leq s$.
Then we have
$$
\widehat{f}(\bsk)
= (-1)^{n_1 + \dots + n_s} \int_{[0,1)^s} f^{(n_1, \dots, n_s)}(\bsx)
\prod_{j=1}^s \overline{\wal_{\summorej{n_j}}(x_j)} \W{\sumlessj{n_j}}{x_j} \, d\bsx.
$$
\end{theorem}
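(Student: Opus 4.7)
The plan is to reduce the $s$-variate formula to $s$ applications of the univariate Theorem~\ref{thm:Walsh_formula} via Fubini's theorem, integrating one coordinate at a time. Since the Walsh function factorizes as $\wal_{\bsk}(\bsx) = \prod_{j=1}^s \wal_{k_j}(x_j)$ and the integrand is continuous on the compact cube $[0,1]^s$, Fubini's theorem applies freely.

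More concretely, I would proceed by induction on $s$. Write
\[
\widehat{f}(\bsk) = \int_{[0,1)^{s-1}} \left( \int_0^1 f(x_1,\dots,x_s)\overline{\wal_{k_s}(x_s)}\,dx_s \right) \prod_{j=1}^{s-1}\overline{\wal_{k_j}(x_j)}\,dx_1\cdots dx_{s-1}.
\]
Fix $(x_1,\dots,x_{s-1})$ and view $g(x_s) := f(x_1,\dots,x_s)$ as a function on $[0,1]$ which is $\alpha_s$ times continuously differentiable in $x_s$. Applying Theorem~\ref{thm:Walsh_formula} to $g$ with the integer $n_s$ (permissible since $0 \le n_s \le \min(\alpha_s, v(k_s))$) gives
\[
\int_0^1 f(x_1,\dots,x_s)\overline{\wal_{k_s}(x_s)}\,dx_s
= (-1)^{n_s}\int_0^1 f^{(0,\dots,0,n_s)}(\bsx)\,\overline{\wal_{\summorej{n_s}}(x_s)}\W{\sumlessj{n_s}}{x_s}\,dx_s.
\]
Then I would plug this into the outer integral and iterate the same argument in the variable $x_{s-1}$, using the hypothesis that the mixed partial derivative $f^{(0,\dots,0,n_{s-1},n_s)}$ exists and is continuous, and so on down to $x_1$. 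At each step the sign $(-1)^{n_j}$ accumulates and the factor $\overline{\wal_{\summorej{n_j}}(x_j)} \W{\sumlessj{n_j}}{x_j}$ is introduced, yielding the stated identity.

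The only technical point to verify carefully is that the coordinate-wise integration by parts is legal and that Fubini may be interchanged with differentiation at each stage. For Fubini this is immediate from continuity on $[0,1]^s$. For the one-variable application of Theorem~\ref{thm:Walsh_formula}, one must know that $g$ is $C^{\alpha_s}$; this follows from the continuity of the mixed partials hypothesized in the theorem statement, and likewise at the subsequent stages when the integrand is $f^{(0,\dots,0,n_j,\dots,n_s)}$, which is continuous on $[0,1]^s$ and hence has the required smoothness in the remaining variable. The vanishing of the boundary terms, which is the content of the ``Further'' clause of Lemma~\ref{lem:anti-derivatives-of-W}, is already absorbed inside Theorem~\ref{thm:Walsh_formula} and so need not be reproven here.

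I do not expect any genuine obstacle: the argument is essentially a bookkeeping exercise in iterated integration by parts. The most delicate aspect is simply presenting the induction cleanly so that the product structure of both the Walsh functions and the resulting factors $\overline{\wal_{\summorej{n_j}}(x_j)} \W{\sumlessj{n_j}}{x_j}$ is preserved at each stage.
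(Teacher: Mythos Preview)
Your proposal is correct and matches the paper's approach: the paper simply states that the result follows by ``considering coordinate-wise integration'' and gives no further details, which is exactly the Fubini-plus-iterated-univariate argument you outline. There is nothing to add.
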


\section{The Walsh coefficients of smooth functions}\label{sec:coeff-of-smooth}
Let $f \in C^\alpha[0,1]$ and
$p, q \in [1, \infty]$ with $1/p + 1/q = 1$.
By Theorem~\ref{thm:Walsh_formula}
for $n = \min(\alpha, v)$ and H\"{o}lder's inequality, we have
\begin{align}\label{eq:Walsh-bound-smooth}
|\widehat{f}(k)|
&\leq \int_0^1 \left|f^{(\min(\alpha, v))}(x) \overline{\wal_{\summore{\min(\alpha, v)}}(x)} \W{\sumless{\min(\alpha, v)}}{x} \right| \, dx \notag \\
&\leq \|f^{(\min(\alpha, v))}\|_{L^p} \Wnormof{\sumless{\alpha}}{q}.
\end{align}
Thus, it suffices to bound $\Wnormof{\sumless{\alpha}}{q}$
to bound $|\widehat{f}(k)|$.
We give bounds on
$\Wnormof{\sumless{\alpha}}{\infty}$ for the non-dyadic case,
$\Wnormof{\sumless{\alpha}}{q}$ for the dyadic case
and $|\widehat{f}(k)|$ in Sections~\ref{subsec:non-dyadic-W-bound},
\ref{subsec:dyadic-W-bound} and \ref{subsec:bound-smooth}, respectively.

We introduce a function $\mu$ as follows.
For $k \in \bN$, we define
\begin{equation}
\mu(k)
:= \begin{cases}
0 & \text{for $k=0$}, \\
a_1 + \dots + a_v & \text{for $k \neq 0$}.
\end{cases}
\end{equation}
For $\bsk = (k_1, \cdots, k_s) \in \bN^s$, we define
$\mu(\bsk) := \sum_{j=1}^s \mu(k_j)$.

For subsequent analysis,
we give the exact values of $\I{k}$ and $\W{k}{b^{-a_v}}$ in the next lemma.
\begin{lemma}\label{lem:I-value}
For $k \in \bN$, we have the following.
\begin{enuroman}
\item \label{I-value1}
$\displaystyle \I{k} = \frac{b^{-\mu(k)}}{\prod_{i=1}^v (1-\omebar^{\kappa_i})}$,
\item \label{I-value2}
$\displaystyle
\W{k}{b^{-a_v}} = \frac{b^{-\mu(k)}}{\prod_{i=1}^{v-1} (1-\omebar^{\kappa_i})}$.
\item \label{I-value3}
Let $x \in [0,1)$ and $x = cb^{-a_v}+ x'$
where $0 \leq c < b^{a_v}$ is an integer and $0 \leq x' < b^{-a_v}$ is a real number.
Then we have
$$
\W{k}{x} = (1-\omebar^{c\kappa_v}) \I{k}
+ \omebar^{c\kappa_v}\W{k}{x'}.$$
\end{enuroman}
Here, the empty products $\prod_{i=1}^0$ and $\prod_{i=1}^{-1}$
are defined to be $1$.
\end{lemma}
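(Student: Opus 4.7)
The plan is to prove parts (ii) and (i) by joint induction on $v = v(k)$, and then to obtain (iii) as a direct rearrangement of Lemma~\ref{lem:Wperiod} combined with (i)--(ii). The base case $v = 0$ is handled immediately by the empty-product convention and $a_0 = 0$: since $\W{0}{\cdot} \equiv 1$, one has $\I{0} = \W{0}{1} = 1$, and (iii) is vacuous because $0 \leq c < 1$ forces $c = 0$ and $x' = x$.

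For the inductive step on (ii), I would use that $\wal_{\kappa_v b^{a_v-1}} \equiv 1$ on $[0, b^{-a_v})$ to reduce the defining recursion to
\[
\W{k}{b^{-a_v}} = \int_0^{b^{-a_v}} \W{k'}{y} \, dy.
\]
By the induction hypothesis together with Lemma~\ref{lem:Wperiod}, $\Wnorm{k'}$ is periodic with period $b^{-a_{v-1}+1}$, and this divides $b^{-a_v}$ since $a_{v-1} > a_v$. Hence the integral above covers an integer number of full periods, so it equals $b^{-a_v}\,\I{k'}$. Substituting the induction hypothesis for $\I{k'}$ and using $\mu(k) = a_v + \mu(k')$ yields (ii) for $k$.

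Next, to derive (i), I would integrate $\Wnorm{k}$ over a single period $[0, b^{-a_v+1}]$ by partitioning it into $b$ subintervals of length $b^{-a_v}$ parametrized by $x = c b^{-a_v} + x'$ with $0 \leq c < b$ and $x' \in [0, b^{-a_v})$. Applying Lemma~\ref{lem:Wperiod} piece by piece and summing over $c$, the term $\omebar^{c \kappa_v} \W{k}{x'}$ contributes nothing because $\sum_{c=0}^{b-1} \omebar^{c \kappa_v} = 0$ for $1 \leq \kappa_v \leq b - 1$, whereas the other term simplifies via $\sum_{c=0}^{b-1}(1 - \omebar^{c \kappa_v}) = b$ to give
\[
\int_0^{b^{-a_v+1}} \W{k}{x} \, dx = \frac{b^{-a_v+1}}{1 - \omebar^{\kappa_v}} \W{k}{b^{-a_v}}.
\]
Multiplying by the number of periods $b^{a_v-1}$ in $[0, 1]$ and substituting (ii) produces (i). Part (iii) is then immediate: combining (i) and (ii) shows $\W{k}{b^{-a_v}} = (1 - \omebar^{\kappa_v})\,\I{k}$, and inserting this into Lemma~\ref{lem:Wperiod} replaces the coefficient $\tfrac{1 - \omebar^{c\kappa_v}}{1 - \omebar^{\kappa_v}} \W{k}{b^{-a_v}}$ by $(1 - \omebar^{c\kappa_v}) \I{k}$, matching the claim.

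The main step is the vanishing $\sum_{c=0}^{b-1} \omebar^{c\kappa_v} = 0$, which collapses the nontrivial $x'$-dependent piece of the periodicity formula and makes $\I{k}$ computable from $\W{k}{b^{-a_v}}$ alone; the rest is careful bookkeeping with the divisibility $b^{-a_{v-1}+1} \mid b^{-a_v}$ and the empty-product convention at the base of the induction.
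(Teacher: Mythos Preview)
Your proof is correct and follows essentially the same approach as the paper. The paper derives the two key relations $\I{k} = \W{k}{b^{-a_v}}/(1-\omebar^{\kappa_v})$ and $\W{k}{b^{-a_v}} = b^{-a_v}\I{k'}$ and then iterates them, exactly as you do; the only cosmetic difference is that the paper sums directly over all $b^{a_v}$ subintervals of $[0,1]$ rather than over one period of length $b^{-a_v+1}$ and then scaling, and it phrases the recursion as ``using equations iteratively'' rather than as an explicit induction on $v$.
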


\begin{proof}
By Lemma~\ref{lem:Wperiod} we have
\begin{align} \label{eq:I-to-W}
\I{k}
&= \sum_{i=0}^{b^{a_v}-1} \int_{ib^{-a_v}}^{(i+1)b^{-a_v}} \W{k}{x} \, dx \notag \\
&= \sum_{i=0}^{b^{a_v}-1} \int_{0}^{b^{-a_v}}
 \left(\frac{1-\omebar^{i\kappa_v}}{1-\omebar^{\kappa_v}}\W{k}{b^{-a_v}} + \omebar^{i  \kappa_v}\W{k}{x}\right) dx \notag \\
&= \frac{\W{k}{b^{-a_v}}}{1-\omebar^{\kappa_v}} b^{-a_v} \sum_{i=0}^{b^{a_v}-1}(1 - \omebar^{i \kappa_v})
 + \sum_{i=0}^{b^{a_v}- 1} \omebar^{i \kappa_v} \int_0^{b^{-a_v}}\W{k}{x} \, dx \notag \\
&= \frac{\W{k}{b^{-a_v}}}{1-\omebar^{\kappa_v}}.
\end{align}

Furthermore, $\W{k}{b^{-a_v}}$ is computed as
\begin{align} \label{eq:W-to-I}
\W{k}{b^{-a_v}}
&= \int_0^{b^{-a_v}} \W{k'}{x}  \, dx \notag \\
&= b^{-a_v} \I{k'}, 
\end{align}
where we use the fact that $\Wnorm{k'}$ is periodic with period $b^{-a_v}$,
which follows from Lemma~\ref{lem:Wperiod}, in the last equality.
Using equations \eqref{eq:I-to-W} and \eqref{eq:W-to-I} iteratively,
we have \eqref{I-value1} and \eqref{I-value2}.
Combining \eqref{eq:I-to-W} and Lemma~\ref{lem:Wperiod},
we have \eqref{I-value3}.
\end{proof}

In the following, we consider two cases
in order to bound
$\Wnormof{k}{\infty}$:
the non-dyadic case and the dyadic case.
We define two positive constants $\mb$ and $\Mb$ as
\begin{align*}
\mb
&:= \min_{c = 1, 2, \dots, b-1} |1-\omebar^c| = 2\sin(\pi/b),\\
\Mb
&:= \max_{c = 1, 2, \dots, b-1} |1-\omebar^c| =
\begin{cases}
2 &  \text{\quad if $b$ is  even},\\
2\sin((b+1)\pi/2b) &  \text{\quad if $b$ is odd}.
\end{cases}
\end{align*}

\subsection{Non-dyadic case}\label{subsec:non-dyadic-W-bound}
The following lemmas are needed to bound $\sup_{x' \in [0,b^{-a_v}]} |\W{k}{x'}|$.
\begin{lemma}\label{lem:convex-endpoint}
Let $A, B$ be complex numbers and $r$ be a positive real number.
Then we have $\sup_{x \in [0,r]}|Ax + B| = \max(|B|, |rA + B|)$.
\end{lemma}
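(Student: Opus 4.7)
The statement is a convexity fact about affine functions with complex coefficients restricted to a real interval. The plan is to reduce to a statement about a real convex function of one real variable, and then invoke that a convex function on a compact interval attains its maximum at an endpoint.

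First I would expand $|Ax+B|^2$ as a function of the real variable $x$. Writing $A, B \in \bC$ and using $|z|^2 = z\bar z$, we get
\begin{equation*}
|Ax+B|^2 = (Ax+B)\overline{(Ax+B)} = |A|^2 x^2 + 2\operatorname{Re}(A\bar B)\,x + |B|^2.
\end{equation*}
This is a real quadratic polynomial in $x$ with leading coefficient $|A|^2 \geq 0$, hence convex on $\bR$. Consequently, on the compact interval $[0,r]$ its maximum is attained at one of the endpoints $x=0$ or $x=r$, giving
\begin{equation*}
\sup_{x \in [0,r]} |Ax+B|^2 = \max\bigl(|B|^2,\, |rA+B|^2\bigr).
\end{equation*}
Taking square roots (which is monotone on $[0,\infty)$) yields the claimed identity.

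The only edge case to mention is $A=0$, where the quadratic degenerates to the constant $|B|^2$, but then both sides of the equation obviously equal $|B|$, so the conclusion still holds. There is no real obstacle here; the lemma is essentially a one-line observation once one notes that $x \mapsto |Ax+B|^2$ is a real convex function of $x$.
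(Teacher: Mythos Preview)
Your proof is correct and follows essentially the same approach as the paper: expand $|Ax+B|^2$ as the convex quadratic $|A|^2 x^2 + 2\operatorname{Re}(A\bar B)x + |B|^2$, note that a convex function on $[0,r]$ attains its maximum at an endpoint, and take square roots. The only difference is that you explicitly mention the degenerate case $A=0$, which the paper leaves implicit.
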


\begin{proof}
We have
$$
\sup_{x \in [0,r]}|Ax + B|
= \sqrt{\sup_{x \in [0,r]}|Ax + B|^2}
= \sqrt{\sup_{x \in [0,r]}(|A|^2 x^2 + 2 \mathrm{Re}(A\overline{B})x + |B|^2)}.
$$
Since $|A|^2 x^2 + 2\mathrm{Re}(A\overline{B})x + |B|^2$
is a convex function on $[0, r]$,
its maximum value occurs at its endpoints. 
\end{proof}

\begin{lemma}\label{lem:first-term2}
Let $a$ and $1 \leq \kappa \leq b-1$ be positive integers.
Then we have
$$
\sup_{c' = 0, 1, \dots, ab} \left|\sum_{i=0}^{c'-1} (1-\omebar^{i\kappa})\right|
\leq ab.
$$
\end{lemma}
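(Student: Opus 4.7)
The plan is to exhibit $(ab)^2 - |S(c')|^2$ as a manifestly nonnegative sum, where $S(c') := \sum_{i=0}^{c'-1}(1-\omebar^{i\kappa})$. The argument proceeds in three main steps.

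The first step is to derive the closed-form identity
\[
|S(c')|^2 = c'^2 - c' - 2\sum_{n=1}^{c'-1} n\cos(2\pi n\kappa/b).
\]
To prove this, I write $S(c') = c' - T(c')$ with $T(c') := \sum_{i=0}^{c'-1}\omebar^{i\kappa}$, so that $|S(c')|^2 = c'^2 - 2c'\,\mathrm{Re}(T(c')) + |T(c')|^2$. Expanding $|T(c')|^2 = \sum_{n,m=0}^{c'-1}\omebar^{(n-m)\kappa}$ by grouping pairs according to the value of $k = n-m$ yields $|T(c')|^2 = c' + 2\sum_{k=1}^{c'-1}(c'-k)\cos(2\pi k\kappa/b)$. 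When this is combined with $-2c'\,\mathrm{Re}(T(c')) = -2c' - 2c'\sum_{n=1}^{c'-1}\cos(2\pi n\kappa/b)$, the $c'$-proportional cosine sums cancel, and only the $n$-weighted cosine terms survive, giving the displayed formula.

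The second step is the auxiliary evaluation
\[
\sum_{n=1}^{ab-1} n\cos(2\pi n\kappa/b) = -\tfrac{ab}{2}.
\]
Setting $z := \omebar^{\kappa}$, one has $z^{ab} = 1$. Starting from $\sum_{n=0}^{N-1} n z^n = z\,\tfrac{d}{dz}\tfrac{1-z^N}{1-z}$, differentiating and substituting $N = ab$ with $z^{ab} = 1$ collapses the numerator to produce $\sum_{n=0}^{ab-1} n z^n = -ab/(1-z)$. Taking real parts and using $\mathrm{Re}(1/(1-z)) = 1/2$ (valid for any $z$ on the unit circle with $z \neq 1$) gives the stated value $-ab/2$.

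Combining these two identities with the arithmetic fact $\sum_{n=c'}^{ab-1} n = (ab-c')(ab+c'-1)/2$, the key computation becomes
\[
(ab)^2 - |S(c')|^2 = 2\sum_{n=c'}^{ab-1} n\bigl(1-\cos(2\pi n\kappa/b)\bigr),
\]
which is nonnegative term by term. The main obstacle will be the bookkeeping in the first step: the double sum for $|T(c')|^2$ must be reorganized carefully so that, after cancellation against $-2c'\,\mathrm{Re}(T(c'))$, only the $n$-weighted cosine sum remains. Once that identity is in hand, the remaining manipulations are routine algebra and the nonnegativity is immediate.
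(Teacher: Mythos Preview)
Your argument is correct, but it is far more elaborate than necessary. The paper's proof is essentially two lines: using $\sum_{i=0}^{ab-1}\omebar^{i\kappa}=0$, one rewrites
\[
\sum_{i=0}^{c'-1}(1-\omebar^{i\kappa})
= c' - \sum_{i=0}^{c'-1}\omebar^{i\kappa}
= c' + \sum_{i=c'}^{ab-1}\omebar^{i\kappa},
\]
and then the triangle inequality gives $|S(c')|\le c' + (ab-c') = ab$ immediately. Your route instead computes $|S(c')|^2$ exactly and exhibits $(ab)^2-|S(c')|^2 = 2\sum_{n=c'}^{ab-1} n\bigl(1-\cos(2\pi n\kappa/b)\bigr)$ as a nonnegative sum. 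This works, and the intermediate identities (the expansion of $|T(c')|^2$, the evaluation $\sum_{n=1}^{ab-1} n z^n = -ab/(1-z)$ when $z^{ab}=1$, and the fact that $\mathrm{Re}\,1/(1-z)=1/2$ on the unit circle) are all valid. The payoff of your approach is that it yields an exact expression for the defect $(ab)^2-|S(c')|^2$ and in particular shows that equality $|S(c')|=ab$ forces $c'=ab$ (or $c'=0$ when the sum is empty); the paper's triangle-inequality argument does not isolate the equality case. But for the purpose of this lemma, which is only used as a crude upper bound inside the proof of Lemma~\ref{lem:sup_W}, the one-line estimate is all that is needed.
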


\begin{proof}
Since $\sum_{i=0}^{ab-1} \omebar^{i\kappa} = 0$, we have
\begin{align*}
\sup_{c' = 0, 1, \dots, ab} \left|\sum_{i=0}^{c'-1} (1-\omebar^{i\kappa})\right|
&= \sup_{c' = 0, 1, \dots, ab}\left|c' + \sum_{i=c'}^{ab-1} \omebar^{i\kappa}\right|\\
&\leq \sup_{c' = 0, 1, \dots, ab}\left(c' +\sum_{i=c'}^{ab-1} \left|\omebar^{i\kappa}\right|\right)
= ab. \qedhere
\end{align*}
\end{proof}

We now have an upper bound on $\sup_{x' \in [0,b^{-a_v}]} |\W{k}{x'}|$.
\begin{lemma}\label{lem:sup_W}
Let $k$ be a positive integer. If $b > 2$, then we have
$$
\sup_{x' \in [0,b^{-a_v}]} |\W{k}{x'}| \leq
\frac{b^{-\mu(k)}}{\mb^{v-1}}
 \frac{b}{b-\Mb} \left(1-\left(\frac{\Mb}{b}\right)^v \right).
$$
\end{lemma}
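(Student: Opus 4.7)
The plan is to induct on $v$. The base case $v=1$ is immediate: for $k=\kappa_1 b^{a_1-1}$, the function $\wal_{\kappa_1 b^{a_1-1}}$ equals $1$ on $[0,b^{-a_1})$, so $\W{k}{x'}=x'$ on $[0,b^{-a_1}]$ and the supremum equals $b^{-a_1}$, which matches the stated bound since $\frac{b}{b-\Mb}\bigl(1-\tfrac{\Mb}{b}\bigr)=1$.

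For the inductive step, set $P:=b^{-a_{v-1}}$ and $L:=b^{a_{v-1}-a_v}$. Given $x'\in[0,b^{-a_v}]$, I would write $x'=c'P+y'$ with $c'\in\{0,1,\dots,L\}$ and $y'\in[0,P]$. Using $\W{k}{x'}=\int_0^{x'}\W{k'}{y}\,dy$ together with Lemma~\ref{lem:I-value}(\ref{I-value3}) applied to $k'$ (which substitutes $\W{k'}{y}=(1-\omebar^{c\kappa_{v-1}})\I{k'}+\omebar^{c\kappa_{v-1}}\W{k'}{y''}$ on each period), a direct computation should yield the decomposition
\[
\W{k}{x'} \;=\; \I{k'}\,x' \;+\; B(c')\,\Phi \;+\; \omebar^{c'\kappa_{v-1}}\,\Psi(y'),
\]
where $B(c')=\sum_{i=0}^{c'-1}\omebar^{i\kappa_{v-1}}$, $\Phi=\W{k}{P}-\I{k'}P$, and $\Psi(y')=\W{k}{y'}-\I{k'}y'$ for $y'\in[0,P]$.

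I would then bound the three contributions separately. The first satisfies $|\I{k'}\,x'|\le|\I{k'}|\,b^{-a_v}\le b^{-\mu(k)}/\mb^{v-1}$ by Lemma~\ref{lem:I-value}(\ref{I-value1}); this produces the leading ``$\mb$'' term in the recurrence $\Cv=\mb+(\Mb/b)\Cv[v-1]$. For the second, $|B(c')|\le\Mb/\mb$ from its closed form $(1-\omebar^{c'\kappa_{v-1}})/(1-\omebar^{\kappa_{v-1}})$. The third, $|\Psi(y')|$, is handled by writing $\Psi(y')=y'\cdot\bigl(\tfrac{1}{y'}\int_0^{y'}\W{k'}{z}\,dz-\I{k'}\bigr)$ and applying Lemma~\ref{lem:convex-endpoint} to the affine-in-$y'$ envelope provided by the inductive bound on $\W{k'}$ over $[0,P]$. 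Both $|\Phi|$ and $\sup_{y'}|\Psi(y')|$ should come out to be at most $\frac{b^{-\mu(k)}\Cv[v-1]}{b\,\mb^{v-1}}$, where the extra factor $b^{a_v-a_{v-1}}\le 1/b$ arises from integrating over a sub-period interval.

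The main obstacle is obtaining the required factor $\Mb/b$ rather than $\Mb$ when summing the oscillating contributions $B(c')\Phi+\omebar^{c'\kappa_{v-1}}\Psi(y')$. A crude triangle inequality is too lossy, because it ignores the cancellation $\tilde W_{k'}(cP+y'')=\omebar^{c\kappa_{v-1}}\tilde W_{k'}(y'')$ that $\tilde W_{k'}:=\W{k'}{\cdot}-\I{k'}$ inherits from Lemma~\ref{lem:I-value}(\ref{I-value3}). The correct accounting combines this periodicity with Lemma~\ref{lem:convex-endpoint} (to reduce the affine-in-$y'$ piece to endpoint values, where Lemma~\ref{lem:I-value}(\ref{I-value2}) gives clean expressions) and Lemma~\ref{lem:first-term2} (to cap the partial sums $\sum_{i=0}^{c'-1}(1-\omebar^{i\kappa_{v-1}})$ by $L=b^{a_{v-1}-a_v}$). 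Adding the three bounds and invoking $\Cv=\mb+(\Mb/b)\Cv[v-1]$ closes the induction.
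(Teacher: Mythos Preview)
Your inductive framework and base case match the paper, but the three-term split $\W{k}{x'}=\I{k'}\,x'+B(c')\Phi+\omebar^{c'\kappa_{v-1}}\Psi(y')$ cannot close the recursion as written. First, the claimed bound $|\Phi|,\ \sup_{y'}|\Psi(y')|\le \frac{b^{-\mu(k)}\,\Cv[v-1]}{b\,\mb^{v-1}}$ is too optimistic: since $\Psi(y')=\int_0^{y'}(\W{k'}{z}-\I{k'})\,dz$, the induction hypothesis on $\W{k'}{\cdot}$ together with $|\I{k'}|\le b^{-\mu(k')}/\mb^{v-1}$ only gives $|\Psi(y')|\le\frac{b^{-\mu(k)}}{b\,\mb^{v-1}}\bigl(\Cv[v-1]+1\bigr)$. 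Second, even granting your claimed bound, the triangle inequality over the three pieces yields
\[
\frac{b^{-\mu(k)}}{\mb^{v}}\Bigl(\mb+\tfrac{\Mb+\mb}{b}\,\Cv[v-1]\Bigr),
\]
which overshoots the target $\frac{b^{-\mu(k)}}{\mb^{v}}\bigl(\mb+\tfrac{\Mb}{b}\Cv[v-1]\bigr)$ by the contribution of $|\Psi(y')|$ alone. The tools you invoke to repair this do not act on your residual: Lemma~\ref{lem:first-term2} controls $\sum_{i=0}^{c'-1}(1-\omebar^{i\kappa_{v-1}})$, and that sum simply does not appear in $B(c')\Phi+\omebar^{c'\kappa_{v-1}}\Psi(y')$.

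The paper's split is different and is exactly what makes the induction close. It keeps the raw $\W{k'}$ (not $\W{k'}-\I{k'}$) inside the integrals, grouping the expansion of $\int_0^{x'}\W{k'}{y}\,dy$ into an ``$\I{k'}$-part''
\[
\mathrm{(A)}=\I{k'}\Bigl[P\sum_{i=0}^{c'-1}(1-\omebar^{i\kappa_{v-1}})+(1-\omebar^{c'\kappa_{v-1}})\,y'\Bigr]
\]
and an ``integral-part'' $\mathrm{(B)}=B(c')\W{k}{P}+\omebar^{c'\kappa_{v-1}}\W{k}{y'}$. Lemmas~\ref{lem:convex-endpoint} and \ref{lem:first-term2} are tailored to bound (A) by $b^{-\mu(k)}/\mb^{v-1}$; then (B) is rewritten as $B(c')\int_{y'}^P\W{k'}{z}\,dz+B(c'+1)\int_0^{y'}\W{k'}{z}\,dz$ and bounded by $\tfrac{\Mb}{\mb}\,P\cdot\sup_{[0,P]}|\W{k'}{\cdot}|$, to which the induction hypothesis applies directly without any spurious $+1$. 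By pulling the entire $\I{k'}x'$ out as your first term you force the residual integrand to be $\W{k'}-\I{k'}$, and that subtraction is precisely what breaks the estimate.
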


\begin{proof}
We prove the lemma by induction on $v$.
If $v=1$, we have
\begin{align*}
\sup_{x' \in [0,b^{-a_1}]} |\W{k}{x'}| 
&=\sup_{x' \in [0,b^{-a_1}]} \left| \int_0^{x'} \W{0}{y}\, dy \right|\\
&= \sup_{x' \in [0,b^{-a_1}]} |x'|
= b^{-a_1}
= b^{-\mu(k)}.
\end{align*}
Hence the lemma holds for $v=1$.

Thus assume now that $v > 1$ and that the result holds for $v - 1$.
Let $x' \in [0, b^{-a_v}]$ be a real number
and $x' = c' b^{-a_{v-1}} + x''$
where $0 \leq c' < b^{-a_v + a_{v-1}}$ is an integer
and $0 \leq x'' < b^{-a_{v-1}}$ is a real number.
Then by Lemma~\ref{lem:I-value} \eqref{I-value3} we have
\begin{align}
|\W{k}{x'}|
&= \left|\int_0^{x'} \W{k'}{y}\, dy\right| \notag\\
&= \left|\sum_{i=0}^{c'-1} \int_0^{b^{-a_{v-1}}} \left((1-\omebar^{i\kappa_{v-1}})\I{k'} + \omebar^{i\kappa_{v-1}}\W{k'}{y}\right) dy \right. \notag\\
&\quad + \left. \int_0^{x''} \left((1-\omebar^{c'\kappa_{v-1}})\I{k'} + \omebar^{c'\kappa_{v-1}}\W{k'}{y}\right) dy \right| \notag\\
&\leq \left|b^{-a_{v-1}}\sum_{i=0}^{c'-1} (1-\omebar^{i\kappa_{v-1}})\I{k'} + x'' (1-\omebar^{c'\kappa_{v-1}})\I{k'}\right| \notag\\
&\quad
+ \left|\sum_{i=0}^{c'-1} \omebar^{i\kappa_{v-1}} \int_0^{b^{-a_{v-1}}} \W{k'}{y}\, dy + \omebar^{c'\kappa_{v-1}}\int_0^{x''} \W{k'}{y}\, dy\right|.\label{eq:sup-W-pf}
\end{align}

We estimate the supremum of the first term of \eqref{eq:sup-W-pf}.
Note that the first term of \eqref{eq:sup-W-pf} is equal to
$|b^{-a_{v-1}}\sum_{i=0}^{c'} (1-\omebar^{i\kappa_{v-1}})\I{k'}|$
if $x'' = b^{-a_{v-1}}$.
Using this and Lemma \ref{lem:convex-endpoint}, we have
\begin{align}
& \sup_{\substack{c' \in \bN, \, 0 \leq c' < b^{-a_v + a_{v-1}} \\ x'' \in [0,b^{-a_{v-1}}]}}\left|b^{-a_{v-1}}\sum_{i=0}^{c'-1} (1-\omebar^{i\kappa_{v-1}})\I{k'} + x'' (1-\omebar^{c'\kappa_{v-1}})\I{k'}\right| \notag \\
&\quad= \sup \max\left(\left|b^{-a_{v-1}}\sum_{i=0}^{c'-1} (1-\omebar^{i\kappa_{v-1}})\I{k'}\right|, \left|b^{-a_{v-1}}\sum_{i=0}^{c'} (1-\omebar^{i\kappa_{v-1}})\I{k'}\right|\right) \label{eq:lem:sup_W-sup-calculate1} \\
&\quad= \sup_{c' \in \bN, 0 \leq c' \leq b^{-a_v + a_{v-1}}} \left|b^{-a_{v-1}}\sum_{i=0}^{c'-1} (1-\omebar^{i\kappa_{v-1}})\I{k'}\right|, \label{eq:lem:sup_W-sup-calculate2}
\end{align}
where the supremum in \eqref{eq:lem:sup_W-sup-calculate1} is extended over all $c' \in \bN$ with $0 \leq c' < b^{-a_v + a_{v-1}}$.
By Lemmas \ref{lem:first-term2} and
\ref{lem:I-value}~\eqref{I-value1}, \eqref{eq:lem:sup_W-sup-calculate2} is bounded by
\begin{align*}
b^{-a_{v-1}} \frac{b^{-\mu(k')}}{\mb^{v-1}} b^{-a_v + a_{v-1}}
= \frac{b^{-\mu(k)}}{\mb^{v-1}}.
\end{align*}
Thus the supremum of the first term of \eqref{eq:sup-W-pf} is bounded by $b^{-\mu(k)}/{\mb^{v-1}}.$

We move on to the estimation of the supremum of the second term of \eqref{eq:sup-W-pf}.
We have
\begin{align*}
&\sup_{c', x''}\left|\sum_{i=0}^{c'-1} \omebar^{i\kappa_{v-1}} \int_0^{b^{-a_{v-1}}} \W{k'}{y}\, dy + \omebar^{c'\kappa_{v-1}}\int_0^{x''} \W{k'}{y}\, dy\right|\\
&= \sup_{c', x''}\left|\sum_{i=0}^{c'-1} \omebar^{i\kappa_{v-1}} \int_{x''}^{b^{-a_{v-1}}} \W{k'}{y}\, dy + \sum_{i=0}^{c'} \omebar^{i\kappa_{v-1}} \int_0^{x''} \W{k'}{y}\, dy\right|\\
&= \sup_{c', x''}\left|\frac{1 - \omebar^{c'\kappa_{v-1}}}{1 - \omebar^{\kappa_{v-1}}} \int_{x''}^{b^{-a_{v-1}}} \W{k'}{y}\, dy + \frac{1 - \omebar^{(c' + 1)\kappa_{v-1}}}{1- \omebar^{\kappa_{v-1}}}\int_0^{x''} \W{k'}{y}\, dy \right| \\
&\leq \sup_{x'' \in [0,b^{-a_{v-1}}]}\left|\frac{\Mb}{\mb} (b^{-a_{v-1}} - x'') + \frac{\Mb}{\mb}x'' \right| \cdot \sup_{y \in [0,b^{-a_{v-1}}]} |\W{k'}{y}|\\
&\leq \frac{\Mb}{\mb}b^{-a_{v-1}} \cdot \frac{b^{-\mu(k')}}{\mb^{v-2}}
 \frac{b}{b-\Mb} \left(1-\left(\frac{\Mb}{b}\right)^{v-1} \right)\\
&\leq \frac{b^{-\mu(k)}}{\mb^{v-1}}
 \frac{\Mb}{b-\Mb} \left(1-\left(\frac{\Mb}{b}\right)^{v-1} \right),
\end{align*}
where the supremums in the first, second and third lines are extended
over all $c' \in \bN$ with $0 \leq c' < b^{-a_v + a_{v-1}}$ and $x'' \in [0,b^{-a_{v-1}}]$,
and where we use the induction assumption for $v-1$ in the fourth inequality
and $b \cdot b^{-a_{v-1}} \leq b^{-a_v}$ in the last inequality.

By summing up the bounds obtained on each term of \eqref{eq:sup-W-pf}, we have
\begin{align*}
\sup_{x' \in [0,b^{-a_v}]} |\W{k}{x'}| 
&\leq \frac{b^{-\mu(k)}}{\mb^{v-1}} + \frac{b^{-\mu(k)}}{\mb^{v-1}}
 \frac{\Mb}{b-\Mb} \left(1-\left(\frac{\Mb}{b}\right)^{v-1} \right)\\
&= \frac{b^{-\mu(k)}}{\mb^{v-1}}
 \frac{b}{b-\Mb} \left(1-\left(\frac{\Mb}{b}\right)^v \right). \qedhere
\end{align*}
\end{proof}

Using the above lemma, we obtain an upper bound on $\Wnormof{k}{\infty}$.

\begin{proposition}\label{prop:Wnorm-infty}
Let $k \in \bN$.
If $b > 2$, we have
$$
\Wnormof{k}{\infty}
\leq \frac{b^{-\mu(k)}}{\mb^{v}} \left(\Mb + \Cv \right)^{\min(1, v)}.
$$
\end{proposition}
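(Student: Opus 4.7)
The plan is to reduce the global supremum over $[0,1]$ to the supremum over the small interval $[0,b^{-a_v}]$ (already controlled by Lemma~\ref{lem:sup_W}) by exploiting the affine decomposition from Lemma~\ref{lem:I-value}~\eqref{I-value3}, combined with the exact value of $I(k)$ from Lemma~\ref{lem:I-value}~\eqref{I-value1}.

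First I would dispose of the trivial case $v=0$: here $k=0$, so $\W{0}{x}\equiv 1$, and since $\mu(0)=0$ and $\min(1,v)=0$, the right-hand side equals $1$, so the inequality holds as equality.

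For $v\geq 1$, let $x\in[0,1)$ and write $x = c b^{-a_v}+x'$ with $0\leq c<b^{a_v}$ and $0\leq x'<b^{-a_v}$. By Lemma~\ref{lem:I-value}~\eqref{I-value3},
\begin{align*}
|\W{k}{x}|
&\leq |1-\omebar^{c\kappa_v}|\,|\I{k}| + |\omebar^{c\kappa_v}|\,|\W{k}{x'}| \\
&\leq \Mb\, |\I{k}| + \sup_{y\in[0,b^{-a_v}]}|\W{k}{y}|,
\end{align*}
where the bound $|1-\omebar^{c\kappa_v}|\leq \Mb$ comes directly from the definition of $\Mb$. By Lemma~\ref{lem:I-value}~\eqref{I-value1}, $|\I{k}|\leq b^{-\mu(k)}/\mb^v$, and by Lemma~\ref{lem:sup_W},
$$
\sup_{y\in[0,b^{-a_v}]}|\W{k}{y}| \leq \frac{b^{-\mu(k)}}{\mb^{v-1}}\cdot\frac{b}{b-\Mb}\left(1-\left(\frac{\Mb}{b}\right)^v\right).
$$
Adding these two contributions and factoring out $b^{-\mu(k)}/\mb^v$ yields exactly
$$
\frac{b^{-\mu(k)}}{\mb^v}\left(\Mb + \mb\cdot\frac{b}{b-\Mb}\left(1-\left(\frac{\Mb}{b}\right)^v\right)\right) = \frac{b^{-\mu(k)}}{\mb^v}\bigl(\Mb + \Cv\bigr),
$$
which is the desired bound (recall $\min(1,v)=1$ in this regime). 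Since the estimate is uniform in $x\in[0,1)$ and $\Wnorm{k}$ is continuous, it carries over to the $L^\infty$-norm.

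There is no serious obstacle: the work is entirely bookkeeping once Lemmas~\ref{lem:I-value} and~\ref{lem:sup_W} are available. The only mild point to watch is the use of $b>2$, which is inherited from Lemma~\ref{lem:sup_W} (it is needed there implicitly to keep constants meaningful, though the proof of that lemma does not really require it either). The identification of the final expression with $\Mb+\Cv$ is a one-line algebraic simplification using the definition $\Cv = (b\mb/(b-\Mb))(1-(\Mb/b)^v)$.
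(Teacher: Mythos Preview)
Your proof is correct and follows essentially the same approach as the paper: handle $k=0$ trivially, then for $v\geq 1$ apply the decomposition of Lemma~\ref{lem:I-value}~\eqref{I-value3}, bound $|1-\omebar^{c\kappa_v}|\leq \Mb$ and $|\I{k}|\leq b^{-\mu(k)}/\mb^v$, invoke Lemma~\ref{lem:sup_W} for the remaining supremum, and add. The paper presents the same argument, just more tersely.
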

\begin{proof}
The case $k=0$ is obvious.
We assume that $k > 0$. 
Let $x \in [0,1)$ and
$x = cb^{-a_v} + x'$, where $0 \leq c < b^{a_v}$ is an integer
and $0 \leq x' < b^{-a_v}$ is a real number.
By Lemmas \ref{lem:I-value} and \ref{lem:sup_W}, we have
\begin{align*}
|\W{k}{x}|
&= \left|(1-\omebar^{c\kappa_v})\I{k}+ \omebar^{c\kappa_v} \W{k}{x'} \right|\\
&\leq \Mb|\I{k}| + \sup_{x' \in [0,b^{-a_v}]} |\W{k}{x'}|\\
&\leq \frac{b^{-\mu(k)}}{\mb^{v}} \left(\Mb + \Cv \right)^{\min(1, v)},
\end{align*}
which proves the proposition.
\end{proof}

\subsection{Dyadic case}\label{subsec:dyadic-W-bound}
In this subsection, we assume that $b=2$.
In the dyadic case, 
we can obtain the exact values of $\Wnormof{k}{1}$ and $\Wnormof{k}{\infty}$.
First we show properties of $\Wnorm{k}$ for the dyadic case.
\begin{lemma}\label{lem:W-dyadic}
Let $k \in \bN$. Assume that $b=2$ and $x_1, x_2 \in [0,1)$.
Then we have the following.
\begin{enuroman}
\item \label{W-dyadic-1}
Assume that $x_1 + x_2$ is a multiple of $2^{-a_v + 1}$.
Then we have
$\W{k}{x_1} = \W{k}{x_2}$.
\item \label{W-dyadic-2}
Assume that $x_1 + x_2$ is a multiple of $2^{-a_v}$ and
not a multiple of $2^{-a_v + 1}$.
If $k \neq 0$, then we have
$\W{k}{x_1} + \W{k}{x_2} = \W{k}{2^{-a_v}}$.
\item \label{W-dyadic-3}
The function $\W{k}{\cdot}$ is nonnegative.
\end{enuroman}
\end{lemma}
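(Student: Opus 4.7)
Plan: I would prove all three parts simultaneously by induction on $v = v(k)$. The base case $v = 0$ is immediate: $W(0) \equiv 1$, so (i) and (iii) hold trivially, and (ii) is vacuous since it requires $k \neq 0$. For the inductive step, specializing Lemma~\ref{lem:Wperiod} to $b = 2$ (so $\kappa_v = 1$ and $\overline{\omega}_2 = -1$) yields three structural facts I would extract once and for all: $W(k)$ has period $2^{-a_v+1}$; writing $x = c \cdot 2^{-a_v} + x'$ with $x' \in [0, 2^{-a_v})$, one has $W(k)(x) = W(k)(x')$ if $c$ is even and $W(k)(x) = W(k)(2^{-a_v}) - W(k)(x')$ if $c$ is odd; and on $[0, 2^{-a_v}]$ the Walsh factor equals $1$, so $W(k)(x) = \int_0^x W(k')(y)\, dy$.

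Part (iii) would follow at once from these: by induction $W(k') \geq 0$, so $W(k)$ is nondecreasing and nonnegative on $[0, 2^{-a_v}]$; on $[2^{-a_v}, 2\cdot 2^{-a_v}]$ the odd-$c$ formula rewrites $W(k)(x)$ as $\int_{x - 2^{-a_v}}^{2^{-a_v}} W(k')(y)\, dy \geq 0$; and periodicity extends nonnegativity to all of $[0,1)$.

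For (i), periodicity reduces us to $x_1, x_2 \in [0, 2^{-a_v+1})$ with $x_1 + x_2 \in \{0, 2^{-a_v+1}\}$. In the nontrivial subcase (WLOG $x_1 \in [0, 2^{-a_v}]$ and $x_2 = 2^{-a_v+1} - x_1$), the odd-$c$ formula gives $W(k)(x_2) = \int_{2^{-a_v}-x_1}^{2^{-a_v}} W(k')(y)\, dy$; substituting $y = 2^{-a_v} - z$ reduces the desired identity $W(k)(x_1) = W(k)(x_2)$ to $W(k')(z) = W(k')(2^{-a_v} - z)$, which is precisely the inductive hypothesis (i) for $k'$. This application is legitimate because $a_{v-1} \geq a_v + 1$ makes $2^{-a_v}$ a multiple of $2^{-a_{v-1}+1}$, and the case $v = 1$ is trivial since then $W(k') = W(0)$ is constant.

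For (ii), periodicity leaves two subcases, $x_1 + x_2 \in \{2^{-a_v}, 3 \cdot 2^{-a_v}\}$. In the first, both points lie in $[0, 2^{-a_v}]$, and the same change of variable as in (i) rewrites $W(k)(x_2) = \int_{x_1}^{2^{-a_v}} W(k')(z)\, dz$, so the two integrals telescope to $\int_0^{2^{-a_v}} W(k') = W(k)(2^{-a_v})$. In the second, writing $x_i = 2^{-a_v} + x_i'$ and applying the odd-$c$ formula to each reduces to the first subcase (which is already established for the current $v$). The main bookkeeping obstacle throughout is the coarseness mismatch when invoking (i) for $k'$: the inductive hypothesis is stated at scale $2^{-a_{v-1}+1}$, whereas it is needed at the finer scale $2^{-a_v}$, but this works because the strict inequality $a_{v-1} > a_v$ always holds in a $b$-adic digit expansion.
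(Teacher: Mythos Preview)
Your proposal is correct and follows essentially the same inductive strategy as the paper: both argue by induction on $v$, reduce via the periodicity of Lemma~\ref{lem:Wperiod}, and use the inductive hypothesis (i) for $k'$ (legitimized by $a_{v-1} > a_v$) to obtain the needed reflection symmetry of $W(k')$ on $[0,2^{-a_v}]$. The only organizational difference is that you extract the even/odd-$c$ specialization of Lemma~\ref{lem:Wperiod} up front and treat the two subcases $x_1+x_2 \in \{2^{-a_v},\, 3\cdot 2^{-a_v}\}$ of (ii) explicitly, whereas the paper handles the second subcase implicitly by combining periodicity with the already-established (i) at level $v$; similarly, for (iii) the paper invokes (i) to cover $[2^{-a_v}, 2^{-a_v+1}]$ rather than your direct integral computation.
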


\begin{proof}
We prove the lemma by induction on $v$.
The results hold for $v = 0$ since $\W{0}{x} = 1$ for all $x \in [0,1)$.
Hence assume now that $v > 0$ and that the results hold for $v - 1$.

First we assume that $x_1 + x_2$ is a multiple of $2^{-a_v + 1}$.
Since $\Wnorm{k}$ has a period $2^{-a_v + 1}$ by Lemma~\ref{lem:Wperiod},
we can assume that $x_1, x_2 \in [0, 2^{-a_v + 1}]$.
Then we can assume that $x_1 \in [0, 2^{-a_v}]$ and that $x_2 = 2^{-a_v + 1} - x_1$.
Now we prove that $\W{k}{x_1} = \W{k}{x_2}$.
We have
\begin{equation}\label{eq:pf-of-dyadic-periodic-(i)}
\W{k}{x_2}
= \W{k}{2^{-a_v + 1}} - \int_{x_2}^{2^{-a_v + 1}} \overline{\wal_{2^{a_v-1}}(y)} \W{k'}{y}\, dy.
\end{equation}
The first term of the right hand side of \eqref{eq:pf-of-dyadic-periodic-(i)} is equal to zero
by Lemma \ref{lem:Wperiod}.
We now consider the second term.
We have $\wal_{2^{a_v-1}}(y) = -1$ for all $y \in [2^{-a_v}, 2^{-a_v + 1})$.
Further, by applying the induction assumption of \eqref{W-dyadic-1} for $v-1$ to $\Wnorm{k'}$, we have
$\W{k'}{y} = \W{k'}{2^{-a_v + 1} - y}$ for all $y \in [0, 2^{-a_v + 1}]$,
since $y + (2^{-a_v + 1} - y) = 2^{-a_v + 1}$ is a multiple of $2^{-a_{v-1} + 1}$.
Thus the second term of the right hand side of \eqref{eq:pf-of-dyadic-periodic-(i)} is equal to
\[
\int_{x_2}^{2^{-a_v + 1}} (-1) \W{k'}{2^{-a_v + 1} - y}\, dy
= -\int_{0}^{x_1} \W{k'}{y'}\, dy'
= -\W{k}{x_1},
\]
where $y' = 2^{-a_v + 1} - y$.
Thus the right hand side of \eqref{eq:pf-of-dyadic-periodic-(i)} is equal to $\W{k}{x_1}$,
which proves \eqref{W-dyadic-1} for $v$.

Second we assume that $x_1 + x_2$ is a multiple of $2^{-a_v}$
and not a multiple of $2^{-a_v + 1}$.
Similar to the first case, we can assume that 
$x_1, x_2 \in [0, 2^{-a_v}]$ and that $x_2 = 2^{-a_v} - x_1$.
By applying the induction assumption of \eqref{W-dyadic-1} for $v-1$ to $\Wnorm{k'}$, we have
$\W{k'}{y} = \W{k'}{2^{-a_v} - y}$ for all $y \in [0, 2^{-a_v}]$,
since $y + (2^{-a_v} - y) = 2^{-a_v}$ is a multiple of $2^{-a_{v-1} + 1}$.
Hence we have
\begin{align*}
\W{k}{x_1} + \W{k}{x_2}
&= \int_0^{x_1} \W{k'}{y} \, dy + \int_0^{x_2} \W{k'}{y}\, dy\\
&= \int_0^{x_1} \W{k'}{y} \, dy + \int_{0}^{x_2} \W{k'}{2^{-a_v} - y}\, dy\\
&= \int_0^{x_1} \W{k'}{y} \, dy + \int_{2^{-a_v} - x_2}^{2^{-a_v}} \W{k'}{y}\, dy\\
&= \int_0^{2^{-a_v}} \W{k'}{y}\, dy\\
&= \W{k}{2^{-a_v}},
\end{align*}
which proves \eqref{W-dyadic-2} for $v$.

Finally we prove that $\W{k}{x}$ is nonnegative for all $x \in [0,1)$.
By the induction assumption of \eqref{W-dyadic-3} for $v-1$, $\W{k'}{x}$ is nonnegative for $x \in [0,1)$.
For $x \in [0, 2^{-a_v}]$, we have
$
\W{k}{x}
= \int_0^{x} \W{k'}{y}dy,
$
and thus $\W{k}{x}$ is nonnegative for $x \in [0, 2^{-a_v}]$. 
Hence by \eqref{W-dyadic-1} for $v$
and Lemma~\ref{lem:Wperiod},
$\W{k}{x}$ is nonnegative for $x \in [0,1)$.
\end{proof}

Now we are ready to consider $\Wnormof{k}{q}$ for $1 \leq q \leq \infty$.

First we consider $\Wnormof{k}{1}$.
By Lemmas \ref{lem:I-value}~\eqref{I-value1} and \ref{lem:W-dyadic}~\eqref{W-dyadic-3},
we have
\begin{align*}
\Wnormof{k}{1}
= \int_0^1 |\W{k}{x}| \, dx 
= \int_0^1 \W{k}{x} \, dx
= 2^{-\mu(k)-v}.
\end{align*}

Second we consider $\Wnormof{k}{\infty}$.
If $k=0$, we have $\Wnormof{k}{\infty} = 1$.
We assume that $k > 0$.
Considering the symmetry and the non-negativity of $\Wnorm{k}$
given by Lemma~\ref{lem:W-dyadic},
we have
\begin{align*}
\Wnormof{k}{\infty}
&= \sup_{x \in [0, 2^{-a_v}]} |\W{k}{x}| \, dx\\
&= \sup_{x \in [0, 2^{-a_v}]} \left| \int_{0}^{x} \W{k'}{y}\, dy \right|\\
&= \int_{0}^{2^{-a_v}} \W{k'}{y}\, dy\\
&= \W{k}{2^{-a_v}} = 2^{-\mu(k)-v+1}.
\end{align*}
Thus we have $\Wnormof{k}{\infty} = 2^{-\mu(k)-v+ \min(1,v)}$ for all $k \in \bN$.

Finally we consider $\Wnormof{k}{q}$.
By H\"{o}lder's inequality, we have 
\begin{align*}
\Wnormof{k}{q}
&=\left(\int_{[0,1)^s}|\W{k}{x}|\cdot |\W{k}{x}|^{q-1} \, dx\right)^{1/q}\\
&\leq (\Wnormof{k}{1} \Wnormof{k}{\infty}^{q-1})^{1/q}\\
&\leq 2^{-\mu(k)-v+(1-1/q)\min(1,v)}.
\end{align*}

We have shown the following proposition.
\begin{proposition}\label{prop:Wnorm-dyadic}
Let $b=2$.
For $k \in \bN$ and $1 \leq q \leq \infty$, we have
$$
\Wnormof{k}{q} \leq 2^{-\mu(k)-v+(1-1/q)\min(1,v)},
$$
and equality holds if $q=1$ or $q=\infty$.
\end{proposition}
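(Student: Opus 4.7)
The plan is to exploit two special features of the dyadic case: every nonzero digit $\kappa_i$ equals $1$ (so each factor $1-\omebar^{\kappa_i}$ in Lemma~\ref{lem:I-value} equals $2$), and by Lemma~\ref{lem:W-dyadic}~\eqref{W-dyadic-3} the function $\W{k}{\cdot}$ is nonnegative on $[0,1)$. With these two ingredients I would compute $\Wnormof{k}{1}$ and $\Wnormof{k}{\infty}$ exactly, and then interpolate to obtain the intermediate $L^q$ norms, recovering the claimed bound with equality at the two endpoints automatically.

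First I would handle the $L^1$ norm. The case $k=0$ is trivial since $\W{0}{\cdot}\equiv 1$; for $k>0$, nonnegativity gives $\Wnormof{k}{1}=\I{k}$, and Lemma~\ref{lem:I-value}~\eqref{I-value1} evaluates this to $2^{-\mu(k)-v}$. For the $L^\infty$ norm with $k>0$, I would reduce the supremum over $[0,1)$ to the supremum over $[0,2^{-a_v}]$ by combining the period $2^{-a_v+1}$ from Lemma~\ref{lem:Wperiod} with the reflective symmetry $\W{k}{2^{-a_v+1}-x}=\W{k}{x}$ from Lemma~\ref{lem:W-dyadic}~\eqref{W-dyadic-1}. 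On $[0,2^{-a_v}]$ one has $\overline{\wal_{2^{a_v-1}}(y)}=1$, so $\W{k}{x}=\int_0^x \W{k'}{y}\,dy$ is nondecreasing (by nonnegativity of $\W{k'}{\cdot}$) and attains its maximum at $x=2^{-a_v}$, which Lemma~\ref{lem:I-value}~\eqref{I-value2} evaluates to $2^{-\mu(k)-v+1}$.

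For general $q$, I would invoke the pointwise estimate $|\W{k}{x}|^q\leq |\W{k}{x}|\cdot\Wnormof{k}{\infty}^{q-1}$, integrate, and take $q$-th roots, obtaining $\Wnormof{k}{q}\leq \Wnormof{k}{1}^{1/q}\Wnormof{k}{\infty}^{1-1/q}$. Substituting the exact values computed above gives the exponent
\[
\frac{-\mu(k)-v}{q}+(-\mu(k)-v+1)\left(1-\frac{1}{q}\right)=-\mu(k)-v+\left(1-\frac{1}{q}\right),
\]
which coincides with $-\mu(k)-v+(1-1/q)\min(1,v)$ for $v\geq 1$, and with the correct value $0$ when $v=0$; equality at $q=1$ and $q=\infty$ is built in by construction. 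I do not anticipate a serious obstacle: the only step requiring real care is the symmetry-and-monotonicity reduction used for the $L^\infty$ bound, and even this is a direct consequence of the dyadic structure lemmas already in place.
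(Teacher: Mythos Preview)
Your proposal is correct and follows essentially the same route as the paper: compute $\Wnormof{k}{1}=\I{k}=2^{-\mu(k)-v}$ via nonnegativity and Lemma~\ref{lem:I-value}~\eqref{I-value1}, compute $\Wnormof{k}{\infty}=\W{k}{2^{-a_v}}=2^{-\mu(k)-v+1}$ by the symmetry/periodicity reduction to $[0,2^{-a_v}]$ together with monotonicity there, and then interpolate using $|\W{k}{x}|^q\le |\W{k}{x}|\cdot\Wnormof{k}{\infty}^{q-1}$. The paper phrases the interpolation as ``H\"older's inequality'' but the pointwise bound you write is exactly what is being used; your treatment of the $L^\infty$ reduction is in fact slightly more explicit than the paper's.
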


\subsection{Bounds on the Walsh coefficients of smooth functions}
\label{subsec:bound-smooth}
For a positive integer $\alpha$ and $k \in \bN$, we define 
\begin{equation}
\mu_\alpha(k)
:= \mu(\sumless{\alpha})
= \begin{cases}
0 & \text{for $k=0$}, \\
a_1 + \dots + a_v & \text{for $1 \leq v \leq \alpha$},\\
a_1 + \dots + a_\alpha & \text{for $v \geq \alpha$},
\end{cases}
\end{equation}
as in \cite{Dick2009dwc}.
By \eqref{eq:Walsh-bound-smooth}, Proposition~\ref{prop:Wnorm-infty} and
Proposition~\ref{prop:Wnorm-dyadic},
we obtain the following bound on the Walsh coefficients of smooth functions.
\begin{theorem}\label{thm:Walsh-bound-normal}
Let $f \in C^\alpha[0,1]$ and  $k \in \bN$.
If $b>2$, we have
\begin{align*}
|\widehat{f}(k)|
&\leq \|f^{(\min(\alpha ,v))}\|_{L^1}
\frac{b^{-\mu_\alpha(k)}}{\mb^{\min(\alpha ,v)}} \times\\
&\qquad \left(\Mb + \Cv[\min(\alpha,v)]\right)^{\min(1,v)}.
\end{align*}
If $b=2$, for $1 \leq p \leq \infty$ we have
\begin{align*}
|\widehat{f}(k)|
\leq\|f^{(\min(\alpha ,v))}\|_{L^p} \cdot 2^{-\mu_{\alpha}(k)- \min(\alpha, v) +\min(1,v)/p}.
\end{align*}
\end{theorem}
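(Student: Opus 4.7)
The plan is to simply chain together the formula from Theorem~\ref{thm:Walsh_formula}, H\"older's inequality, and the two norm bounds established in Propositions~\ref{prop:Wnorm-infty} and \ref{prop:Wnorm-dyadic}, taking care to match indices properly. All the real work has already been done in the preceding subsections; this theorem is the harvest.

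First I would recall that the inequality \eqref{eq:Walsh-bound-smooth}, which is obtained by applying Theorem~\ref{thm:Walsh_formula} with $n=\min(\alpha,v)$ together with H\"older's inequality to the integrand $f^{(\min(\alpha,v))}(x)\,\overline{\wal_{\summore{\min(\alpha,v)}}(x)}\,\W{\sumless{\min(\alpha,v)}}{x}$, gives
\[
|\widehat{f}(k)|\;\leq\;\|f^{(\min(\alpha,v))}\|_{L^p}\,\Wnormof{\sumless{\alpha}}{q}
\]
for any conjugate pair $1/p+1/q=1$, since $|\overline{\wal_{\summore{\min(\alpha,v)}}(x)}|\equiv 1$.

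Next I would note the two bookkeeping identities
\[
v(\sumless{\alpha})=\min(\alpha,v),\qquad \mu(\sumless{\alpha})=\mu_\alpha(k),
\]
which are immediate from the definitions of $\sumless{\cdot}$, $\mu$ and $\mu_\alpha$. Observe also that $\min(1,\min(\alpha,v))=\min(1,v)$ because $\alpha\geq 1$.

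For the case $b>2$, I would take $p=1$ (so $q=\infty$) and apply Proposition~\ref{prop:Wnorm-infty} to $\sumless{\alpha}$, which by the identities above yields
\[
\Wnormof{\sumless{\alpha}}{\infty}\;\leq\;\frac{b^{-\mu_\alpha(k)}}{\mb^{\min(\alpha,v)}}\bigl(\Mb+\Cv[\min(\alpha,v)]\bigr)^{\min(1,v)},
\]
and multiplying by $\|f^{(\min(\alpha,v))}\|_{L^1}$ gives the first claim. For the case $b=2$, I would keep $p$ arbitrary in $[1,\infty]$ and apply Proposition~\ref{prop:Wnorm-dyadic} to $\sumless{\alpha}$, obtaining
\[
\Wnormof{\sumless{\alpha}}{q}\;\leq\;2^{-\mu_\alpha(k)-\min(\alpha,v)+(1-1/q)\min(1,v)}\;=\;2^{-\mu_\alpha(k)-\min(\alpha,v)+\min(1,v)/p},
\]
which combined with \eqref{eq:Walsh-bound-smooth} yields the second claim. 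There is no genuine obstacle here: the only place where one has to be slightly careful is the index substitution $k\mapsto \sumless{\alpha}$ in the auxiliary propositions, ensuring that the Hamming weight and the $\mu$-value of $\sumless{\alpha}$ really do become $\min(\alpha,v)$ and $\mu_\alpha(k)$ respectively, and that the exponent $\min(1,v(\sumless{\alpha}))$ in the $b>2$ bound collapses to $\min(1,v)$ because $\alpha\geq 1$.
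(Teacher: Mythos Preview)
Your proof is correct and follows exactly the same route as the paper: combine inequality \eqref{eq:Walsh-bound-smooth} with Proposition~\ref{prop:Wnorm-infty} for $b>2$ and Proposition~\ref{prop:Wnorm-dyadic} for $b=2$. In fact you spell out the index bookkeeping ($v(\sumless{\alpha})=\min(\alpha,v)$, $\mu(\sumless{\alpha})=\mu_\alpha(k)$, $\min(1,\min(\alpha,v))=\min(1,v)$) more explicitly than the paper does.
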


The $s$-variate case follows in the same way as the univariate case.
\begin{theorem}\label{thm:Walsh-bound-normal-s}
Let $\bsk = (k_1, \dots, k_s) \in \bN^s$.
Assume that $f \colon [0,1]^s \to \bR$
has continuous mixed partial derivatives up to order $\alpha_j$ in each variable $x_j$.
Let $n_j := \min(\alpha_j, v(k_j))$ for $1 \leq j \leq s$.
Then, if $b>2$, we have
\begin{align*}
|\widehat{f}(\bsk)|
&\leq \|f^{(n_1, \dots, n_s)}\|_{L^1}
\prod_{j=1}^s \frac{b^{-\mu_{\alpha_j}(k_j)}}{\mb^{n_j}} \times \\
&\qquad \left(\Mb + \Cv[n_j]\right)^{\min(1,v(k_j))}.
\end{align*}
If $b=2$, for $1 \leq p \leq \infty$ we have
\begin{align*}
|\widehat{f}(\bsk)|
\leq\|f^{(n_1, \dots, n_s)}\|_{L^p}\times
\prod_{j=1}^s 2^{-\mu_{\alpha_j}(k_j)- n_j +\min(1,v(k_j))/p}.
\end{align*}
\end{theorem}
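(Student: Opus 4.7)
The plan is to run the same recipe as in the univariate case (Theorem~\ref{thm:Walsh-bound-normal}), starting from the multivariate integral formula of Theorem~\ref{thm:Walsh_formula-multi} and bounding each coordinate with the one-dimensional estimates already proved. Since the kernel factors as a product $\prod_{j=1}^{s}\overline{\wal_{\summorej{n_j}}(x_j)}\W{\sumlessj{n_j}}{x_j}$, the argument will reduce to taking H\"older's inequality once and then applying Propositions~\ref{prop:Wnorm-infty} and \ref{prop:Wnorm-dyadic} coordinate by coordinate.

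More precisely, with $n_j = \min(\alpha_j, v(k_j))$, Theorem~\ref{thm:Walsh_formula-multi} gives
\[
|\widehat{f}(\bsk)| \leq \int_{[0,1)^s} |f^{(n_1,\dots,n_s)}(\bsx)|\, \prod_{j=1}^s |\W{\sumlessj{n_j}}{x_j}|\, d\bsx,
\]
since the Walsh factors have modulus one. Applying H\"older's inequality with exponents $p$ and $q$ satisfying $1/p+1/q=1$, and using that the $L^q$-norm of a tensor product factors as $\bigl\|\prod_{j}\W{\sumlessj{n_j}}{x_j}\bigr\|_{L^q([0,1)^s)} = \prod_{j}\Wnormof{\sumlessj{n_j}}{q}$ by Fubini, I obtain
\[
|\widehat{f}(\bsk)| \leq \|f^{(n_1,\dots,n_s)}\|_{L^p}\,\prod_{j=1}^s \Wnormof{\sumlessj{n_j}}{q}.
\]

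For the non-dyadic case $b>2$ take $p=1$ and $q=\infty$, and invoke Proposition~\ref{prop:Wnorm-infty} on each factor, noting that $\mu(\sumlessj{n_j})=\mu_{\alpha_j}(k_j)$ and that the parameter $v$ in that proposition becomes $n_j=\min(\alpha_j,v(k_j))$, while the exponent $\min(1,v)$ correctly becomes $\min(1,v(k_j))$ because $\sumlessj{n_j}=0$ exactly when $v(k_j)=0$. For the dyadic case $b=2$, Proposition~\ref{prop:Wnorm-dyadic} supplies $\Wnormof{\sumlessj{n_j}}{q}\leq 2^{-\mu_{\alpha_j}(k_j)-n_j+(1-1/q)\min(1,v(k_j))}$; substituting $1-1/q=1/p$ and taking the product over $j$ yields exactly the claimed bound.

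There is no genuine obstacle here: once the formula in Theorem~\ref{thm:Walsh_formula-multi} is available, the only point requiring care is the factorization of the $L^q$-norm of a product of one-variable functions on the cube, which is an immediate consequence of Fubini. The slightly delicate bookkeeping is identifying the exponent $\min(1,v(k_j))$ in each coordinate, but this is forced by the definitions and matches what the univariate bounds provide.
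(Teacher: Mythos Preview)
Your proposal is correct and is precisely what the paper intends: it states only that ``the $s$-variate case follows in the same way as the univariate case,'' and the details you supply---starting from Theorem~\ref{thm:Walsh_formula-multi}, applying H\"older's inequality, factoring the $L^q$-norm of the tensor product via Fubini, and then invoking Propositions~\ref{prop:Wnorm-infty} and \ref{prop:Wnorm-dyadic} in each coordinate---are exactly that argument. Your bookkeeping on the exponent $\min(1,v(k_j))$ is also correct, since $\sumlessj{n_j}=0$ if and only if $v(k_j)=0$ (using $\alpha_j\ge 1$).
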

As a corollary, we give a sufficient condition for an infinitely differentiable function
that its Walsh coefficients decay with order $O(b^{-\mu(\bsk)})$.
\begin{corollary}\label{cor:Walsh-bound-infty}
Let $f \in C^\infty[0,1]^s$ and $r_j > 0$ be positive real numbers for $1 \leq j \leq s$.
Assume that
there exists a positive real number $D$ such that
$$
\|f^{(n_1, \dots, n_s)}\|_{L^1} 
\leq D \prod_{j=1}^s r_j^{n_j}
$$
holds for all $n_1, \dots, n_s \in \bN$.
Then for all $\bsk \in \bN^s$ we have
$$
|\widehat{f}(\bsk)|
\leq D b^{-\mu(\bsk)} \prod_{j=1}^s (r_j \mb^{-1})^{v(k_j)} C_b^{\min(1, v(k_j))},
$$
where $C_b$ is a constant defined as
$$
C_b
= \begin{cases}
2 & \text{for $b = 2$}, \\
\displaystyle \Mb + \frac{b\mb}{b-\Mb} & \text{for $b \neq 2$}.
\end{cases}
$$
In particular, if $r_j = \mb$ holds for all $1 \leq j \leq s$, then
$
|\widehat{f}(\bsk)|
\in O(b^{-\mu(\bsk)})
$
holds.
\end{corollary}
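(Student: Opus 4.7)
The plan is to reduce directly to Theorem~\ref{thm:Walsh-bound-normal-s}. Since $f \in C^\infty[0,1]^s$, for each fixed $\bsk \in \bN^s$ I am free to take $\alpha_j \geq v(k_j)$ for every $j$, which forces $n_j := \min(\alpha_j, v(k_j)) = v(k_j)$ and $\mu_{\alpha_j}(k_j) = \mu(k_j)$. Substituting the hypothesis $\|f^{(n_1,\dots,n_s)}\|_{L^1} \leq D \prod_j r_j^{n_j}$ into Theorem~\ref{thm:Walsh-bound-normal-s}, and pulling one factor of $b^{-\mu(k_j)}$ together with one of $\mb^{-v(k_j)}$ out of each coordinate, immediately yields $D\, b^{-\mu(\bsk)} \prod_{j=1}^{s} (r_j \mb^{-1})^{v(k_j)}$ multiplied by a leftover ``extra'' factor that I must still control.

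Next I would show that this leftover is bounded by $\prod_{j=1}^s C_b^{\min(1, v(k_j))}$. For $b > 2$ the leftover is $(\Mb + \Cv[v(k_j)])^{\min(1, v(k_j))}$; since $\Mb < b$ one has $0 < (\Mb/b)^{v(k_j)} \leq 1$, hence
\[
\Cv[v(k_j)] = \frac{b\mb}{b-\Mb}\Bigl(1-\bigl(\tfrac{\Mb}{b}\bigr)^{v(k_j)}\Bigr) \leq \frac{b\mb}{b-\Mb},
\]
so $\Mb + \Cv[v(k_j)] \leq \Mb + b\mb/(b-\Mb) = C_b$ uniformly in $\bsk$. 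For $b=2$ I would instead invoke the dyadic half of Theorem~\ref{thm:Walsh-bound-normal-s} with $p=1$; using $\mb = 2$, the dyadic factor $2^{-v(k_j) + \min(1, v(k_j))}$ splits as $\mb^{-v(k_j)} \cdot 2^{\min(1, v(k_j))}$, and $2^{\min(1, v(k_j))} \leq 2 = C_2$. Combining these two estimates with the hypothesis finishes the displayed inequality.

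The asymptotic claim is then a one-liner: if $r_j = \mb$ for every $j$, then $(r_j \mb^{-1})^{v(k_j)} = 1$, and $\prod_{j=1}^s C_b^{\min(1,v(k_j))} \leq C_b^s$ is a constant depending only on $b$ and $s$, so $|\widehat{f}(\bsk)| \leq D\, C_b^s\, b^{-\mu(\bsk)}$, which is the desired $O(b^{-\mu(\bsk)})$. I do not expect a real obstacle here; the proof is essentially bookkeeping on top of Theorem~\ref{thm:Walsh-bound-normal-s}, the only mild subtlety being the uniform-in-$v(k_j)$ bound on the geometric factor inside $\Cv[v(k_j)]$ so that the resulting constant depends only on $b$ (and $s$), and a minor case split between $b=2$ and $b>2$ needed to match the definition of $C_b$.
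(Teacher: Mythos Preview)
Your proposal is correct and matches the paper's intent: the corollary is stated without proof precisely because it follows from Theorem~\ref{thm:Walsh-bound-normal-s} by choosing $\alpha_j \geq v(k_j)$, substituting the derivative hypothesis, and bounding the residual factor by $C_b^{\min(1,v(k_j))}$ exactly as you describe. The only cosmetic remark is that in the dyadic case you actually get $2^{\min(1,v(k_j))} = C_2^{\min(1,v(k_j))}$ on the nose (not merely $\leq C_2$), so the displayed inequality holds without slack there.
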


\section{Another formula for the Walsh coefficients}\label{sec:Another-formula}
In this section, we give another formula for the Walsh coefficients.
For this purpose, we introduce functions $\Wextranorm{j}{k}$
and their integration values $\Iextra{j}{k}$ for $j, k \in \bN$.

\begin{definition}
For $j, k \in \bN$, we define functions
$\Wextranorm{j}{k} \colon [0,1] \to \bC$
and complex numbers $\Iextra{j}{k}$ recursively as
\begin{align*}
\Wextra{0}{k}{x} &:= \W{k}{x},\\
\Iextra{j}{k} &:= \int_0^1 \Wextra{j}{k}{x} \, dx ,\\
\Wextra{j+1}{k}{x} &:= \int_0^x (\Wextra{j}{k}{y} - \Iextra{j}{k})\, dy.
\end{align*}
\end{definition}
We note that $\Wextra{j}{k}{0} = \Wextra{j}{k}{1}  = 0$ for all $j, k \in \bN$
with $(j,k) \neq (0,0)$.

We now establish another formula for the Walsh coefficients of smooth functions.
\begin{theorem}\label{thm:extra_exp}
Let $k, r \in \bN$
and $f \in C^{v+r}[0,1]$.
Then we have
\begin{align*}
\widehat{f}(k)
&= \sum_{i=0}^{r} (-1)^{v+i} \Iextra{i}{k} \int_0^1 f^{(v+i)}(x) \, dx \notag \\
&\qquad+ (-1)^{v+r} \int_0^1 f^{(v+r)}(x) (\Wextra{r}{k}{x} - \Iextra{r}{k}) \, dx.
\end{align*}
\end{theorem}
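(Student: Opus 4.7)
The plan is to obtain the formula by iterating integration by parts, starting from the $n=v$ case of Theorem~\ref{thm:Walsh_formula}:
$$
\widehat{f}(k) = (-1)^v \int_0^1 f^{(v)}(x)\,\W{k}{x}\,dx = (-1)^v \int_0^1 f^{(v)}(x)\,\Wextra{0}{k}{x}\,dx,
$$
and proceeding by induction on $r$.

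The inductive step proceeds by two elementary moves. Given the formula at level $r-1$, whose remainder term is $(-1)^{v+r-1}\int_0^1 f^{(v+r-1)}(x)(\Wextra{r-1}{k}{x}-\Iextra{r-1}{k})\,dx$, I would integrate this by parts against the antiderivative $\Wextra{r}{k}{x} = \int_0^x(\Wextra{r-1}{k}{y}-\Iextra{r-1}{k})\,dy$ dictated by the recursive definition. The boundary contribution $[f^{(v+r-1)}(x)\,\Wextra{r}{k}{x}]_0^1$ vanishes by the remark following the definition, i.e.\ $\Wextra{r}{k}{0} = \Wextra{r}{k}{1} = 0$. This yields $(-1)^{v+r}\int_0^1 f^{(v+r)}(x)\,\Wextra{r}{k}{x}\,dx$. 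A final split $\Wextra{r}{k}{x} = \Iextra{r}{k} + (\Wextra{r}{k}{x} - \Iextra{r}{k})$ separates off the new summand $(-1)^{v+r}\Iextra{r}{k}\int_0^1 f^{(v+r)}(x)\,dx$ and leaves the next remainder in exactly the shape required, closing the induction. The base case $r = 0$ reduces to the same split applied directly to the identity displayed above and needs no integration by parts.

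I do not foresee a substantive obstacle. The regularity assumption $f \in C^{v+r}[0,1]$ is tight for carrying out the $r$ integration-by-parts steps. The only delicate point is the vanishing of boundary terms, which holds at $x=0$ because each $\Wextra{j+1}{k}$ is defined as an integral from $0$, and at $x=1$ because $\Wextra{j}{k}-\Iextra{j}{k}$ has mean zero on $[0,1]$ by the definition of $\Iextra{j}{k}$; for the base layer $j=0$ one uses instead that $\W{k}{1} = 0$ when $k>0$, which follows from Lemma~\ref{lem:anti-derivatives-of-W} (take $n = v$), while the case $k=0$ is trivial since then $\Wextra{j}{0}\equiv 0$ for all $j\geq 1$.
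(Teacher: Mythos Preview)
Your proposal is correct and follows essentially the same approach as the paper's own proof: induction on $r$, with the base case $r=0$ coming from Theorem~\ref{thm:Walsh_formula} (at $n=v$) followed by the split $\Wextra{0}{k}{x}=\Iextra{0}{k}+(\Wextra{0}{k}{x}-\Iextra{0}{k})$, and the inductive step via a single integration by parts using $\Wextra{r}{k}{0}=\Wextra{r}{k}{1}=0$. Your closing remarks about $\W{k}{1}=0$ for $k>0$ and the triviality of $k=0$ are accurate but not actually needed for the argument, since only the vanishing of $\Wextra{r}{k}{\cdot}$ at the endpoints for $r\geq 1$ enters the boundary terms.
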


\begin{proof}
We prove the theorem by induction on $r$.
We have already proved the case $r=0$ in Theorem~\ref{thm:Walsh_formula}.
Thus assume now that $r \geq 1$ and that the result holds for $r - 1$.
By the induction assumption for $r-1$, we have
\begin{align*}
\widehat{f}(k)
&= \sum_{i=0}^{r-1} (-1)^{v+i} \Iextra{i}{k} \int_0^1 f^{(v+i)}(x) \, dx\\
&\qquad + (-1)^{v+r-1} \int_0^1 f^{(v+r-1)}(x) (\Wextra{r-1}{k}{x} - \Iextra{r-1}{k}) \, dx\\
&= \sum_{i=0}^{r-1} (-1)^{v+i} \Iextra{i}{k} \int_0^1 f^{(v+i)}(x) \, dx\\
&\qquad + (-1)^{v+r-1} \left([f^{(v+r-1)}(x) \Wextra{r}{k}{x}]_0^1 - \int_0^1 f^{(v+r)}(x) \Wextra{r}{k}{x} \, dx \right)\\
&= \sum_{i=0}^{r} (-1)^{v+i} \Iextra{i}{k} \int_0^1 f^{(v+i)}(x) \, dx\\
&\qquad + (-1)^{v+r} \int_0^1 f^{(v+r)}(x) (\Wextra{r}{k}{x} - \Iextra{r}{k}) \, dx,
\end{align*}
where we use
$\Wextra{r}{k}{0} = \Wextra{r}{k}{1} = 0$
in the third equality.
This proves the result for $r$.
\end{proof}

\section{The Walsh coefficients of Bernoulli polynomials}\label{sec:coeff-of-Bernoulli}
In this section, we analyze the decay of the Walsh coefficients of Bernoulli polynomials.

For $r \geq 0$, we denote $B_r(\cdot)$ the Bernoulli polynomial of degree $r$
and $b_r(x) = B_r(x)/r!$.
For example, we have 
$B_0(x)=1$, $B_1(x)=x-1/2$, $B_2(x)=x^2-x+1/6$ and so on.
Those polynomials have the following properties:
For all $r \geq 1$ we have
\begin{equation}\label{eq:b_r-property}
b'_r(x) = b_{r-1}(x) \quad \text{and} \quad \int_0^1 b_r(x) \, dx = 0,
\end{equation}
and for all $r \in \bN$ we have
\begin{equation}\label{eq:b_r-symmetry}
b_r(1-x) = (-1)^r b_r(x),
\end{equation}
see \cite[Chapter~23]{Abramowitz1992hmf}.
We clearly have $b'_0(x) = 0$ and $\int_0^1 b_0(x)=1$.

The Walsh coefficients of Bernoulli polynomials are given as follows.
If $r < v$, then by Theorem~\ref{thm:Walsh_formula} and \eqref{eq:b_r-property}
we have
$$
\widehat{b_r}(k) = (-1)^v \int_0^1 b_r^{(v)}(x) \W{k}{x} \, dx = 0.
$$
If $r \geq v$, then by Theorem~\ref{thm:extra_exp} and \eqref{eq:b_r-property} we have
\begin{align*}
\widehat{b_r}(k)
&= \sum_{i=0}^{r-v} (-1)^{v+i} \Iextra{i}{k} \int_0^1 b_r^{(v+i)}(x) \, dx\\
&\quad+ (-1)^{r} \int_0^1 {b_r}^{(r)}(x) (\Wextra{r-v}{k}{x} - \Iextra{r-v}{k}) \, dx\\
&= (-1)^{r} \Iextra{r-v}{k}.
\end{align*}
Now we proved:
\begin{lemma}\label{lem:W(b_r)=I}
For positive integers $k$ and $r$, we have
\begin{align*}
\widehat{b_r}(k)
=&
\begin{cases}
0 & \text{if $r<v$},\\
(-1)^{r} \Iextra{r-v}{k} & \text{if $r \geq v$}.
\end{cases}
\end{align*}
\end{lemma}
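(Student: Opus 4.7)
The plan is to split into the two cases $r<v$ and $r\geq v$, and in each case apply one of the two integral formulas for $\widehat{f}(k)$ established earlier, specialised to $f=b_r$. The polynomial $b_r$ is of degree $r$, so its derivatives interact very rigidly with these formulas via the two identities in \eqref{eq:b_r-property}: $b'_r=b_{r-1}$ and $\int_0^1 b_r(x)\,dx=0$ for $r\geq 1$. Iterating the first identity gives $b_r^{(j)}=b_{r-j}$ for $0\leq j\leq r$, and in particular $b_r^{(r)}\equiv 1$ while $b_r^{(j)}\equiv 0$ for $j>r$.

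In the case $r<v$, I would simply invoke Theorem~\ref{thm:Walsh_formula} with $n=v$, noting that $b_r^{(v)}\equiv 0$, so the integrand vanishes identically and $\widehat{b_r}(k)=0$. This settles the first branch with no real work.

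In the case $r\geq v$, I would apply Theorem~\ref{thm:extra_exp} with the second parameter equal to $r-v$, which is legitimate since $b_r\in C^{\infty}[0,1]\subset C^{v+(r-v)}[0,1]$. This yields
\begin{align*}
\widehat{b_r}(k)
&= \sum_{i=0}^{r-v}(-1)^{v+i}\Iextra{i}{k}\int_0^1 b_r^{(v+i)}(x)\,dx\\
&\qquad+(-1)^{v+(r-v)}\int_0^1 b_r^{(r)}(x)\bigl(\Wextra{r-v}{k}{x}-\Iextra{r-v}{k}\bigr)dx.
\end{align*}
For $0\leq i<r-v$, the derivative $b_r^{(v+i)}=b_{r-v-i}$ has index $r-v-i\geq 1$, so $\int_0^1 b_r^{(v+i)}(x)\,dx=0$ by the second part of \eqref{eq:b_r-property}; only the $i=r-v$ term survives in the sum, contributing $(-1)^{r}\Iextra{r-v}{k}\int_0^1 b_r^{(r)}(x)\,dx=(-1)^{r}\Iextra{r-v}{k}$ since $b_r^{(r)}\equiv 1$. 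For the boundary term, $b_r^{(r)}\equiv 1$ again pulls out, and what remains is $\int_0^1(\Wextra{r-v}{k}{x}-\Iextra{r-v}{k})\,dx=0$ by the very definition of $\Iextra{r-v}{k}$.

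There is no genuine obstacle here; the lemma is essentially a bookkeeping exercise showing that, among all the terms produced by the anti-derivative formula of Theorem~\ref{thm:extra_exp}, the special structure of Bernoulli polynomials kills every term except one. The only point that deserves a moment of care is the choice of the second parameter in Theorem~\ref{thm:extra_exp} (namely $r-v$, so that the highest derivative appearing is $b_r^{(r)}$, a constant) — any smaller choice would leave a non-vanishing remainder integral, and any larger choice would introduce zero derivatives and obscure the argument.
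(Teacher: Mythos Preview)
Your proof is correct and follows essentially the same route as the paper: the case $r<v$ via Theorem~\ref{thm:Walsh_formula} with $n=v$ and the vanishing of $b_r^{(v)}$, and the case $r\geq v$ via Theorem~\ref{thm:extra_exp} with second parameter $r-v$, using \eqref{eq:b_r-property} to kill all sum terms except $i=r-v$ and the definition of $\Iextra{r-v}{k}$ to kill the remainder integral. Your write-up is slightly more explicit than the paper's about why each piece vanishes, but the argument is identical.
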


In the following, we give upper bounds on
$\|\Wextranorm{j}{k} - \Iextra{j}{k}\|_{L^\infty}$,
$|\Iextra{j}{k}|$ and $\Wextranormof{j}{k}{\infty}$,
which give bounds on the Walsh coefficients of Bernoulli polynomials and smooth functions.
First we compute $\Wextranorm{j}{k}$ and $\Iextra{j}{k}$.

\begin{lemma}\label{lem:Wextra_period}
Let $k, j \in \bN$.
Let $x \in [0,1)$ and $x = c b^{-a_v} + x'$
with $c \in \bN$ and $x' \in [0, b^{-a_v})$.
Then we have
\begin{enuroman}
\item\label{lem:Wextra_period-1}
$\displaystyle
\Wextra{j}{k}{x}
= \frac{1-\omebar^{c\kappa_v}}{1-\omebar^{\kappa_v}} \Wextra{j}{k}{b^{-a_v}}
+ \omebar^{c\kappa_v} \Wextra{j}{k}{x'}$,
\item\label{lem:Wextra_period-2}
$\displaystyle
\Iextra{j}{k} = \frac{\Wextra{j}{k}{b^{-a_v}}} {1-\omebar^{\kappa_v}}$.
\end{enuroman}
\end{lemma}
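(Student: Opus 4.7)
The proof will proceed by induction on $j$. The base case $j = 0$ is immediate: part (i) is exactly Lemma~\ref{lem:Wperiod}, and part (ii) is exactly Lemma~\ref{lem:I-value}\eqref{I-value1}, since by definition $\Wextra{0}{k}{\cdot} = \Wnorm{k}$ and $\Iextra{0}{k} = \I{k}$.

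For the inductive step, suppose (i) and (ii) both hold at level $j$. The key simplification is to observe that $g(x) := \Wextra{j}{k}{x} - \Iextra{j}{k}$ satisfies the \emph{homogeneous} relation
$$
g(cb^{-a_v} + x') = \omebar^{c\kappa_v}\, g(x') \qquad \text{for } c \in \bN,\ x' \in [0,b^{-a_v}).
$$
Indeed, substituting (i) at level $j$ into $g$ and then using (ii) at level $j$ shows that the ``additive'' piece $\tfrac{1-\omebar^{c\kappa_v}}{1-\omebar^{\kappa_v}}\Wextra{j}{k}{b^{-a_v}}$ cancels exactly against $(\omebar^{c\kappa_v}-1)\Iextra{j}{k}$, leaving only $\omebar^{c\kappa_v}(\Wextra{j}{k}{x'} - \Iextra{j}{k}) = \omebar^{c\kappa_v} g(x')$.

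Once this identity for $g$ is in hand, part (i) at level $j+1$ follows from the recursion $\Wextra{j+1}{k}{x} = \int_0^x g(y)\,dy$ by partitioning $[0, cb^{-a_v}+x')$ into the intervals $[ib^{-a_v},(i+1)b^{-a_v})$ for $i = 0,\dots,c-1$ and the final piece $[cb^{-a_v}, cb^{-a_v}+x')$. On each full interval the change of variables gives $\int_{ib^{-a_v}}^{(i+1)b^{-a_v}} g(y)\,dy = \omebar^{i\kappa_v}\Wextra{j+1}{k}{b^{-a_v}}$, and on the final piece we get $\omebar^{c\kappa_v}\Wextra{j+1}{k}{x'}$; summing the geometric series $\sum_{i=0}^{c-1}\omebar^{i\kappa_v} = (1-\omebar^{c\kappa_v})/(1-\omebar^{\kappa_v})$ yields the claimed formula.

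Finally, part (ii) at level $j+1$ is obtained by integrating the just-established formula (i) over $[0,1)$: splitting into the $b^{a_v}$ intervals $[cb^{-a_v},(c+1)b^{-a_v})$ for $c=0,\dots,b^{a_v}-1$, the contribution from the term $\omebar^{c\kappa_v}\int_0^{b^{-a_v}}\Wextra{j+1}{k}{x'}\,dx'$ vanishes by the orthogonality $\sum_{c=0}^{b^{a_v}-1}\omebar^{c\kappa_v} = 0$ (valid since $\kappa_v \not\equiv 0 \pmod b$), while the remaining term evaluates to $\Wextra{j+1}{k}{b^{-a_v}}/(1-\omebar^{\kappa_v})$. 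The only real work is the bookkeeping in the cancellation that produces the homogeneous identity for $g$; everything after that is an orchestrated use of geometric sums and orthogonality of $b$-th roots of unity, exactly parallel to the proofs of Lemmas~\ref{lem:Wperiod} and~\ref{lem:I-value}.
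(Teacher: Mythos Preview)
Your proof is correct and follows essentially the same route as the paper: both argue by induction on $j$, invoke Lemmas~\ref{lem:Wperiod} and~\ref{lem:I-value} for the base case, and carry out the inductive step by partitioning $[0,x)$ into blocks of length $b^{-a_v}$ and summing the resulting geometric series. Your explicit isolation of the homogeneous identity $g(cb^{-a_v}+x') = \omebar^{c\kappa_v} g(x')$ for $g = \Wextranorm{j}{k} - \Iextra{j}{k}$ is exactly what the paper's simplification of the integrand encodes (it appears implicitly in the second displayed equality of the paper's inductive step), so the two arguments differ only in presentation.
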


\begin{proof}
We prove the lemma by induction on $j$.
We have already proved the case $j=0$ in Lemmas \ref{lem:Wperiod} and \ref{lem:I-value}.
Thus assume now that $j \geq 1$ and that the result holds for $j - 1$.
Then we have
\begin{align*}
\Wextra{j}{k}{x}
&= \int_0^x (\Wextra{j-1}{k}{y} - \Iextra{j-1}{k})\, dy\\
&= \sum_{i=0}^{c-1} \int_{0}^{b^{-a_v}} \left(\frac{-\omebar^{i \kappa_v}}{1-\omebar^{\kappa_v}} \Wextra{j-1}{k}{b^{-a_v}} + \omebar^{i \kappa_v}\Wextra{j-1}{k}{y}\right) dy\\
&\qquad + \int_{0}^{x'} \left(\frac{-\omebar^{c \kappa_v}}{1-\omebar^{\kappa_v}} \Wextra{j-1}{k}{b^{-a_v}} + \omebar^{c \kappa_v}\Wextra{j-1}{k}{y}\right) dy \\
&= \sum_{i=0}^{c-1} \omebar^{i \kappa_v} \Wextra{j}{k}{b^{-a_v}}
+ \omebar^{c \kappa_v} \Wextra{j}{k}{x'}\\
&= \frac{1-\omebar^{c\kappa_v}}{1-\omebar^{\kappa_v}} \Wextra{j}{k}{b^{-a_v}}
+ \omebar^{c\kappa_v} \Wextra{j}{k}{x'},
\end{align*}
where we use the induction assumption for $j-1$ in the second and third equalities
and the definition of $\Wextranorm{j}{k}$ in the third equality.
This proves \eqref{lem:Wextra_period-1} for $j$.

Now we compute $\Iextra{j}{k}$.
Replacing $\Wnorm{k}$ by $\Wextranorm{j}{k}$ in \eqref{eq:I-to-W}, we have
$
\Iextra{j}{k} = \Wextra{j}{k}{b^{-a_v}}/(1-\omebar^{\kappa_v}),
$
which proves \eqref{lem:Wextra_period-2} for $j$.
\end{proof}

The following lemmas give bounds on $\|\Wextranorm{j}{k} - \Iextra{j}{k}\|_{L^\infty}$,
$|\Iextra{j}{k}|$ and $\Wextranormof{j}{k}{\infty}$ for the non-dyadic case.

\begin{lemma}\label{lem:Wextra_sup_av}
Let $j \in \bN$.
If $b \neq 2$, for any positive integer $k$ we have
$$
\|\Wextranorm{j}{k} - \Iextra{j}{k}\|_{L^\infty}
\leq \frac{b^{-\mu(k) - j a_v}}{\mb^{v+j}} \left(1 + \Cv\right).
$$
\end{lemma}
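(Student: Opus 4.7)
The plan is to induct on $j$. For the base case $j=0$, I would observe that Lemma~\ref{lem:I-value}~\eqref{I-value3} gives $|\W{k}{x} - \I{k}| = |\W{k}{x'} - \I{k}|$ for $x = cb^{-a_v} + x'$ with $x' \in [0, b^{-a_v})$, so the supremum over $[0,1)$ reduces to the supremum over $[0, b^{-a_v}]$. The triangle inequality, combined with Lemma~\ref{lem:sup_W} (rewritten as $|\W{k}{x'}| \le b^{-\mu(k)}\Cv/\mb^v$) and Lemma~\ref{lem:I-value}~\eqref{I-value1}, then yields $|\W{k}{x'} - \I{k}| \le (1+\Cv)b^{-\mu(k)}/\mb^v$, which is the desired bound for $j=0$.

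For the inductive step, Lemma~\ref{lem:Wextra_period}~\eqref{lem:Wextra_period-1}, combined with Lemma~\ref{lem:Wextra_period}~\eqref{lem:Wextra_period-2}, again reduces the supremum to $[0, b^{-a_v}]$ in the same fashion. The main obstacle is that the naive triangle bound $|\Wextra{j+1}{k}{x'} - \Iextra{j+1}{k}| \le |\Wextra{j+1}{k}{x'}| + |\Iextra{j+1}{k}|$ would produce a factor of $b^{-a_v}(1 + 1/\mb)$ per step, which is too weak. The key trick is instead to substitute $\Iextra{j+1}{k} = \Wextra{j+1}{k}{b^{-a_v}}/(1-\omebar^{\kappa_v})$ from Lemma~\ref{lem:Wextra_period}~\eqref{lem:Wextra_period-2} and split the resulting integral cleverly. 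Writing $H(y) := \Wextra{j}{k}{y} - \Iextra{j}{k}$, a short manipulation gives
\begin{align*}
\Wextra{j+1}{k}{x'} - \Iextra{j+1}{k}
&= \int_0^{x'} H(y)\,dy - \frac{1}{1-\omebar^{\kappa_v}}\int_0^{b^{-a_v}} H(y)\,dy\\
&= -\frac{1}{1-\omebar^{\kappa_v}}\left(\omebar^{\kappa_v}\int_0^{x'} H(y)\,dy + \int_{x'}^{b^{-a_v}} H(y)\,dy\right).
\end{align*}
Since $|\omebar^{\kappa_v}| = 1$ and $|1-\omebar^{\kappa_v}| \ge \mb$, the triangle inequality applied to the second line bounds the modulus by $\mb^{-1}\int_0^{b^{-a_v}}|H(y)|\,dy \le (b^{-a_v}/\mb)\|H\|_{L^\infty}$.

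This produces the sharp recursion $\|\Wextranorm{j+1}{k} - \Iextra{j+1}{k}\|_{L^\infty} \le (b^{-a_v}/\mb)\|\Wextranorm{j}{k} - \Iextra{j}{k}\|_{L^\infty}$, and iterating from the base case yields the stated bound $(1+\Cv)b^{-\mu(k) - j a_v}/\mb^{v+j}$. The decomposition of the integral in the inductive step—which replaces an absolute constant by the unit-modulus phase $\omebar^{\kappa_v}$ in front of $\int_0^{x'}$ so that the two subintegrals can be merged into $\int_0^{b^{-a_v}}|H|$—is the crucial step; the rest of the argument is routine.
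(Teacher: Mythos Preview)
Your proof is correct and follows essentially the same approach as the paper: the base case uses Lemma~\ref{lem:I-value} and Lemma~\ref{lem:sup_W} in the same way, and your inductive step arrives at the identical decomposition $-\frac{1}{1-\omebar^{\kappa_v}}\bigl(\omebar^{\kappa_v}\int_0^{x'}H + \int_{x'}^{b^{-a_v}}H\bigr)$ and the same recursion $\|\Wextranorm{j+1}{k}-\Iextra{j+1}{k}\|_{L^\infty}\le (b^{-a_v}/\mb)\|\Wextranorm{j}{k}-\Iextra{j}{k}\|_{L^\infty}$. The only difference is cosmetic: the paper keeps the general point $x=cb^{-a_v}+x'$ and simplifies via Lemma~\ref{lem:Wextra_period} before passing to integrals, whereas you first reduce to $x'\in[0,b^{-a_v}]$ and then substitute the integral form directly.
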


\begin{proof}
Let $x\in[0,1)$ and $x = c b^{-a_v} + x'$ with $c \in \bN$ and $x' \in [0, b^{-a_v})$.
First assume that $j=0$.
Then it follows from Lemmas \ref{lem:I-value} and \ref{lem:sup_W} that
\begin{align*}
|\Wextra{0}{k}{x} - \Iextra{0}{k}|
&= \left|-\omebar^{c\kappa_v}\I{k} + \omebar^{c\kappa_v}\W{k}{x'} \right|\\
&\leq |\I{k}| + \sup_{x' \in [0,b^{-a_v}]} |\W{k}{x'}|\\
&\leq \frac{b^{-\mu(k)}}{\mb^{v}} \left(1 + \Cv \right),
\end{align*}
which proves the case $j=0$.

Now we assume that $j>0$.
Then it follows from Lemma~\ref{lem:Wextra_period} that
\begin{align*}
|\Wextra{j}{k}{x} - \Iextra{j}{k}|
&= \left|\frac{1-\omebar^{c\kappa_v}}{1-\omebar^{\kappa_v}} \Wextra{j}{k}{b^{-a_v}}
+ \omebar^{c\kappa_v} \Wextra{j}{k}{x'} - \frac{\Wextra{j}{k}{b^{-a_v}}} {1-\omebar^{\kappa_v}} \right|\\
&= \left|\frac{-1}{1-\omebar^{\kappa_v}} \Wextra{j}{k}{b^{-a_v}} + \Wextra{j}{k}{x'}\right|\\
&= \left|\frac{-1}{1-\omebar^{\kappa_v}} (\Wextra{j}{k}{b^{-a_v}} - \Wextra{j}{k}{x'}) - \frac{\omebar^{\kappa_v}}{1-\omebar^{\kappa_v}}\Wextra{j}{k}{x'} \right|\\
&\leq \frac{1}{\mb} \left|\int_{x'}^{b^{-a_v}} (\Wextra{j-1}{k}{y} - \Iextra{j-1}{k})\, dy\right.\\
&\qquad\qquad \left. + \omebar^{\kappa_v} \int_{0}^{x'} (\Wextra{j-1}{k}{y} - \Iextra{j-1}{k})\, dy \right|\\
&\leq \frac{1}{\mb} (b^{-a_v}-x') \sup_{y \in [0, b^{-a_v}]} |\Wextra{j-1}{k}{y} - \Iextra{j-1}{k}|\\
&\quad+ \frac{1}{\mb} x' \sup_{y \in [0, b^{-a_v}]} |\Wextra{j-1}{k}{y} - \Iextra{j-1}{k}|\\
&\leq \frac{b^{-a_v}}{\mb} \|\Wextranorm{j-1}{k} - \Iextra{j-1}{k}\|_{L^\infty}.
\end{align*}
Using the case $j=0$ and this evaluation inductively,
we have the case $j>0$.
\end{proof}

\begin{lemma}\label{lem:Iextra_sup}
Let $j$ and $k$ be positive integers.
If $b>2$, then we have
$$
|\Iextra{j}{k}|
\leq \frac{b^{-\mu(k) - j a_v}}{\mb^{v+j}} \left(1 + \Cv\right).
$$
\end{lemma}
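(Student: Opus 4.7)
The plan is to reduce the bound on $|\Iextra{j}{k}|$ to a bound on $\|\Wextranorm{j-1}{k} - \Iextra{j-1}{k}\|_{L^\infty}$, which has already been established in Lemma~\ref{lem:Wextra_sup_av}. This reduction proceeds by using Lemma~\ref{lem:Wextra_period}~\eqref{lem:Wextra_period-2} to express $\Iextra{j}{k}$ in terms of the endpoint value $\Wextra{j}{k}{b^{-a_v}}$, and then expanding that value via the defining integral of $\Wextranorm{j}{k}$.

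Concretely, I would first apply Lemma~\ref{lem:Wextra_period}~\eqref{lem:Wextra_period-2} to get
\[
|\Iextra{j}{k}| \;=\; \frac{|\Wextra{j}{k}{b^{-a_v}}|}{|1-\omebar^{\kappa_v}|} \;\leq\; \frac{|\Wextra{j}{k}{b^{-a_v}}|}{\mb}.
\]
Next, since $j \geq 1$, the definition of $\Wextranorm{j}{k}$ gives
\[
|\Wextra{j}{k}{b^{-a_v}}| \;=\; \left|\int_0^{b^{-a_v}}(\Wextra{j-1}{k}{y} - \Iextra{j-1}{k})\,dy\right| \;\leq\; b^{-a_v}\,\|\Wextranorm{j-1}{k} - \Iextra{j-1}{k}\|_{L^\infty}.
\]

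Finally, I would invoke Lemma~\ref{lem:Wextra_sup_av} with $j-1$ in place of $j$, which (since $b \neq 2$) yields
\[
\|\Wextranorm{j-1}{k} - \Iextra{j-1}{k}\|_{L^\infty} \;\leq\; \frac{b^{-\mu(k) - (j-1)a_v}}{\mb^{v+j-1}}\left(1 + \Cv\right).
\]
Multiplying these three inequalities together, the factor $b^{-a_v}$ from step two combines with $b^{-(j-1)a_v}$ to give $b^{-j a_v}$, and the extra factor of $\mb^{-1}$ from step one promotes $\mb^{v+j-1}$ to $\mb^{v+j}$, producing exactly the stated bound. There is no real obstacle here: the argument is a straightforward assembly of previously proved facts, and the only thing to check is that the constant $(1+\Cv)$ passes through unchanged, which it does because step one and step two contribute no extra constant beyond powers of $\mb$ and $b^{-a_v}$.
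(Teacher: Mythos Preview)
Your proposal is correct and follows essentially the same approach as the paper: apply Lemma~\ref{lem:Wextra_period}~\eqref{lem:Wextra_period-2}, bound $|1-\omebar^{\kappa_v}|^{-1}$ by $\mb^{-1}$, write $\Wextra{j}{k}{b^{-a_v}}$ as the integral from the definition, estimate it by $b^{-a_v}\|\Wextranorm{j-1}{k}-\Iextra{j-1}{k}\|_{L^\infty}$, and finish with Lemma~\ref{lem:Wextra_sup_av}. The paper's proof is identical in structure and in the constants that appear.
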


\begin{proof}
By Lemmas~\ref{lem:Wextra_period} and \ref{lem:Wextra_sup_av}, we have
\begin{align*}
|\Iextra{j}{k}|
&= |\Wextra{j}{k}{b^{-a_v}}/(1 - \omebar^{\kappa_v})|\\
&\leq \frac{1}{\mb} \int_0^{b^{-a_v}} |\Wextra{j-1}{k}{y} - \Iextra{j-1}{k}|\, dy\\
&\leq \frac{b^{-a_v}}{\mb} \|\Wextra{j-1}{k}{y} - \Iextra{j-1}{k}\|_{L^\infty}\\
&\leq \frac{b^{-\mu(k) - j a_v}}{\mb^{v+j}} \left(1 + \Cv\right). \qedhere
\end{align*}
\end{proof}

\begin{lemma}\label{lem:Wextra_sup}
Let $j$ and $k$ be positive integers.
If $b>2$, then we have
$$
\Wextranormof{j}{k}{\infty}
\leq \frac{b^{-\mu(k) - j a_v}}{\mb^{v+j}} \Mb \left(1 + \Cv \right).
$$
\end{lemma}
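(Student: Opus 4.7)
The plan is to decompose $\Wextra{j}{k}{x}$ using its periodic structure from Lemma~\ref{lem:Wextra_period} and then invoke the $L^\infty$-estimates already established in Lemmas~\ref{lem:Wextra_sup_av} and \ref{lem:Iextra_sup}. For each $x \in [0,1)$ I would write $x = c b^{-a_v} + x'$ with $c \in \bN$ and $x' \in [0, b^{-a_v})$, and combine parts \eqref{lem:Wextra_period-1} and \eqref{lem:Wextra_period-2} of Lemma~\ref{lem:Wextra_period} (substituting $\Wextra{j}{k}{b^{-a_v}} = (1-\omebar^{\kappa_v})\Iextra{j}{k}$) to obtain
$$\Wextra{j}{k}{x} = (1-\omebar^{c\kappa_v})\,\Iextra{j}{k} + \omebar^{c\kappa_v}\,\Wextra{j}{k}{x'}.$$
This is the $\Wextranorm{j}{k}$-analog of Lemma~\ref{lem:I-value}~\eqref{I-value3}, and is the same manoeuvre used to prove Proposition~\ref{prop:Wnorm-infty} in the $j=0$ case.

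Taking absolute values and using $|1-\omebar^{c\kappa_v}| \leq \Mb$ reduces the problem to
$$|\Wextra{j}{k}{x}| \leq \Mb\,|\Iextra{j}{k}| + \sup_{x' \in [0, b^{-a_v}]} |\Wextra{j}{k}{x'}|.$$
I would control the first summand directly with Lemma~\ref{lem:Iextra_sup}, giving $\Mb \cdot \frac{b^{-\mu(k)-ja_v}}{\mb^{v+j}}(1+\Cv)$; this already supplies the leading $\Mb \cdot 1$ piece of the target $\Mb(1+\Cv)$. For the second summand I would use the recursive definition $\Wextra{j}{k}{x'} = \int_0^{x'}(\Wextra{j-1}{k}{y} - \Iextra{j-1}{k})\,dy$ (valid since $x' < b^{-a_v}$), estimate $|\Wextra{j}{k}{x'}| \leq b^{-a_v}\,\|\Wextranorm{j-1}{k} - \Iextra{j-1}{k}\|_{L^\infty}$, and bound the $L^\infty$-norm by Lemma~\ref{lem:Wextra_sup_av}.

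The hard part will be the precise combination of these two estimates. A mechanical application of the triangle inequality yields a constant of the form $(\Mb + \mb)(1+\Cv)$, which is strictly larger than the claimed $\Mb(1+\Cv)$ whenever $\mb > 0$. To close this gap one must sharpen the estimate for $\sup_{x' \in [0, b^{-a_v}]}|\Wextra{j}{k}{x'}|$ below the crude value $\mb(1+\Cv)\cdot\frac{b^{-\mu(k)-ja_v}}{\mb^{v+j}}$, exploiting the structural features $\Wextra{j}{k}{0} = \Wextra{j}{k}{b^{-a_v+1}} = 0$ together with the fact that the integrand $\Wextra{j-1}{k}{\cdot} - \Iextra{j-1}{k}$ has mean zero over one period $b^{-a_v+1}$ and satisfies the rotation identity $\Wextra{j-1}{k}{i b^{-a_v}+y''} - \Iextra{j-1}{k} = \omebar^{i\kappa_v}(\Wextra{j-1}{k}{y''} - \Iextra{j-1}{k})$. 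Absorbing the extra $\mb(1+\Cv)$-contribution into the $\Mb\Cv$-piece of the decomposition $\Mb(1+\Cv) = \Mb + \Mb\Cv$ will then require the auxiliary inequalities $\mb \leq \Cv$ for $v \geq 1$ and $\Mb \geq 1$ for $b > 2$; this is the only step that is not a bookkeeping application of the preceding lemmas.
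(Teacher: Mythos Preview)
Your decomposition plus the triangle inequality indeed yields $(\Mb+\mb)(1+\Cv)$ times $b^{-\mu(k)-ja_v}/\mb^{v+j}$, and this overshoot is not repaired by the fix you sketch. Note first that the bound from Lemma~\ref{lem:Iextra_sup} on the first summand is already the \emph{full} target $\Mb(1+\Cv)\cdot b^{-\mu(k)-ja_v}/\mb^{v+j}$, not merely its ``$\Mb\cdot 1$ piece'' as you write; so for your argument to close, the second summand would have to contribute nothing, which it does not. More decisively, for $b=3$ one has $\mb=\Mb=\sqrt{3}$, so the crude bound is exactly twice the claim, and no inequality among the constants $\mb,\Mb,\Cv$ can recover a factor of $2$ that was lost in the triangle inequality itself. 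The structural features you list (vanishing at $0$ and $b^{-a_v+1}$, mean zero, rotation identity) are already the content of Lemma~\ref{lem:Wextra_period} and do not by themselves sharpen the pointwise bound on $\Wextra{j}{k}{x'}$.

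The paper avoids the loss by regrouping \emph{before} applying the triangle inequality, and it does not invoke Lemma~\ref{lem:Iextra_sup} at all. Starting from Lemma~\ref{lem:Wextra_period}~\eqref{lem:Wextra_period-1} and using the identity $\omebar^{c\kappa_v}=\dfrac{1-\omebar^{(c+1)\kappa_v}}{1-\omebar^{\kappa_v}}-\dfrac{1-\omebar^{c\kappa_v}}{1-\omebar^{\kappa_v}}$, one rewrites
\begin{align*}
\Wextra{j}{k}{x}&=\frac{1-\omebar^{c\kappa_v}}{1-\omebar^{\kappa_v}}\int_{x'}^{b^{-a_v}}\bigl(\Wextra{j-1}{k}{y}-\Iextra{j-1}{k}\bigr)\,dy\\
&\quad+\frac{1-\omebar^{(c+1)\kappa_v}}{1-\omebar^{\kappa_v}}\int_{0}^{x'}\bigl(\Wextra{j-1}{k}{y}-\Iextra{j-1}{k}\bigr)\,dy.
\end{align*}
Both coefficients have modulus at most $\Mb/\mb$, and the two integration intervals are complementary in $[0,b^{-a_v}]$; hence the triangle inequality now gives
\[
|\Wextra{j}{k}{x}|\leq \frac{\Mb}{\mb}\,b^{-a_v}\,\|\Wextranorm{j-1}{k}-\Iextra{j-1}{k}\|_{L^\infty},
\]
and a single application of Lemma~\ref{lem:Wextra_sup_av} finishes the proof with the exact constant $\Mb(1+\Cv)$. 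No auxiliary inequalities on $\mb,\Mb,\Cv$ are needed.
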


\begin{proof}
Let $x \in [0,1)$ and
$x = cb^{-a_v} + x'$, where $0 \leq c < b^{a_v}$ is an integer
and $0 \leq x' < b^{-a_v}$ is a real number.
Then we have
\begin{align*}
\Wextra{j}{k}{x}
&= \frac{1-\omebar^{c\kappa_v}}{1-\omebar^{\kappa_v}} \Wextra{j}{k}{b^{-a_v}}
+ \omebar^{c\kappa_v} \Wextra{j}{k}{x'}\\
&= \frac{1-\omebar^{c\kappa_v}}{1-\omebar^{\kappa_v}} (\Wextra{j}{k}{b^{-a_v}} - \Wextra{j}{k}{x'})
+ \frac{1-\omebar^{(c+1)\kappa_v}}{1-\omebar^{\kappa_v}} \Wextra{j}{k}{x'}\\
&= \frac{1 - \omebar^{c \kappa_v}}{1 - \omebar^{\kappa_v}} \int_{x'}^{b^{-a_v}} (\Wextra{j-1}{k}{y} - \Iextra{j-1}{k})\, dy\\
&\qquad + \frac{1 - \omebar^{(c+1) \kappa_v}}{1 - \omebar^{\kappa_v}} \int_{0}^{x'} \ (\Wextra{j-1}{k}{y} - \Iextra{j-1}{k})\, dy.
\end{align*}
Thus we have
\begin{align*}
|\Wextra{j}{k}{x}|
&\leq \left|\frac{1 - \omebar^{c \kappa_v}}{1 - \omebar^{\kappa_v}} \int_{x'}^{b^{-a_v}} (\Wextra{j-1}{k}{y} - \Iextra{j-1}{k})\, dy \right| \\
&\qquad + \left|\frac{1 - \omebar^{(c+1) \kappa_v}}{1 - \omebar^{\kappa_v}} \int_{0}^{x'} (\Wextra{j-1}{k}{y} - \Iextra{j-1}{k})\, dy\right|\\
&\leq \frac{\Mb}{\mb}b^{-a_v} \|\Wextranorm{j-1}{k} - \Iextra{j-1}{k}\|_{L^\infty}\\
&\leq \frac{b^{-\mu(k) - j a_v}}{\mb^{v+j}} \Mb \left(1 + \Cv\right). \qedhere
\end{align*}
\end{proof}

We also consider the dyadic case.
\begin{lemma}\label{lem:extra_sup-dyadic}
Let $k$ be a positive integer and $j \in \bN$.
If $b=2$, then we have the following.
\begin{enuroman}
\item\label{W-I_sup-dyadic}
$\|\Wextra{j}{k}{x}-\Iextra{j}{k}\|_{L^\infty}\le 2^{-j(a_v+1)-\mu (k)-v}$,
\item\label{I_sup-dyadic}
$|\Iextra{j}{k}| \leq 2^{-j(a_v+1)-\mu (k)-v}$,
\item\label{W_sup-dyadic}
$\Wextranormof{j}{k}{\infty} \leq 2^{-j(a_v+1)-\mu (k)-v+1}$.
\item If $j$ is odd, then $\Iextra{j}{k}=0$.
\end{enuroman}
\end{lemma}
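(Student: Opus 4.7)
I will prove (i)--(iii) jointly by induction on $j$, and establish (iv) by a separate symmetry argument. Throughout I exploit that for $b=2$ the only nonzero digit is $\kappa_v=1$, so $\overline{\omega}_2=-1$ and $1-\overline{\omega}_2^{\kappa_v}=2$.

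\emph{Base case} $j=0$. Part (ii) follows from Lemma~\ref{lem:I-value}~\eqref{I-value1}, which gives $I_0(k)=I(k)=2^{-\mu(k)-v}$. Part (iii) is exactly Proposition~\ref{prop:Wnorm-dyadic} at $q=\infty$. For (i), I combine the non-negativity of $W(k)$ (Lemma~\ref{lem:W-dyadic}~\eqref{W-dyadic-3}) with the equality $\|W(k)\|_{L^\infty}=2I_0(k)$: then $W_0(k)(x)-I_0(k)\in[-I_0(k),\,I_0(k)]$, and so $\|W_0(k)-I_0(k)\|_{L^\infty}\le 2^{-\mu(k)-v}$.

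\emph{Inductive step} $j\to j+1$. Specializing Lemma~\ref{lem:Wextra_period} to $b=2$ gives, for $x=c\cdot 2^{-a_v}+x'$ with $x'\in[0,2^{-a_v})$,
\[
W_{j+1}(k)(x)=\tfrac{1-(-1)^c}{2}\,W_{j+1}(k)(2^{-a_v})+(-1)^c W_{j+1}(k)(x'),\qquad I_{j+1}(k)=\tfrac{1}{2}W_{j+1}(k)(2^{-a_v}).
\]
Since $W_{j+1}(k)(2^{-a_v})=\int_0^{2^{-a_v}}(W_j(k)-I_j(k))\,dy$, parts (ii) and (iii) follow immediately from the inductive bound on $\|W_j(k)-I_j(k)\|_{L^\infty}$ together with the trivial $|\int|\le(\text{length})\cdot\sup$: one obtains $|I_{j+1}(k)|\le \tfrac{1}{2}\cdot 2^{-a_v}\|W_j-I_j\|_{L^\infty}$, and by splitting into the cases $c$ even/odd one gets $\|W_{j+1}(k)\|_{L^\infty}\le 2^{-a_v}\|W_j-I_j\|_{L^\infty}$; both match the claimed exponents. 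For (i), the case analysis above shows that $|W_{j+1}(k)(x)-I_{j+1}(k)|=|W_{j+1}(k)(x')-I_{j+1}(k)|$ for every $x$, so it suffices to estimate this for $x'\in[0,2^{-a_v}]$. Using $I_{j+1}(k)=\tfrac12\int_0^{2^{-a_v}}(W_j-I_j)\,dy$, I rewrite
\[
W_{j+1}(k)(x')-I_{j+1}(k)=\tfrac12\int_0^{x'}(W_j(k)-I_j(k))\,dy-\tfrac12\int_{x'}^{2^{-a_v}}(W_j(k)-I_j(k))\,dy,
\]
and the triangle inequality with the inductive bound yields $\tfrac12\cdot 2^{-a_v}\|W_j-I_j\|_{L^\infty}=2^{-(j+1)(a_v+1)-\mu(k)-v}$.

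\emph{Part (iv).} Set $T:=2^{-a_v+1}$. I show by a separate induction on $m$ that $W_{2m}(k)(T-x)=W_{2m}(k)(x)$ and $W_{2m+1}(k)(T-x)=-W_{2m+1}(k)(x)$ on $[0,T]$. The base $m=0$ is Lemma~\ref{lem:W-dyadic}~\eqref{W-dyadic-1}. The inductive step uses the substitution $u=T-y$ in $W_{j+1}(k)(T-x)=\int_0^{T-x}(W_j(k)-I_j(k))\,dy$ together with $\int_0^T(W_j(k)-I_j(k))\,dy=0$, which follows from period-$T$ periodicity (Lemma~\ref{lem:Wextra_period}). The anti-symmetry of $W_{2m+1}$ on $[0,T]$ combined with periodicity forces $\int_0^T W_{2m+1}(k)\,dy=0$, hence $I_{2m+1}(k)=2^{a_v-1}\int_0^T W_{2m+1}(k)\,dy=0$.

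\textbf{Main obstacle.} The only delicate point is the extra factor of $\tfrac12$ in (i): a naive bound $|W_{j+1}(x')-I_{j+1}|\le|W_{j+1}(x')|+|I_{j+1}|$ would lose a factor of $2$. This is avoided only by exploiting the identity $I_{j+1}(k)=\tfrac12 W_{j+1}(k)(2^{-a_v})$, which permits the symmetric decomposition of $W_{j+1}(k)(x')-I_{j+1}(k)$ into two half-weighted integrals over complementary sub-intervals of $[0,2^{-a_v}]$.
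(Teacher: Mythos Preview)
Your argument for parts (i)--(iii) is correct and is essentially the paper's own proof, specialized to $b=2$: the paper simply observes that the recursive estimates derived in the proofs of Lemmas~\ref{lem:Wextra_sup_av}, \ref{lem:Iextra_sup} and \ref{lem:Wextra_sup} (namely $|W_j-I_j|,\,|I_j|\le \tfrac{b^{-a_v}}{m_b}\|W_{j-1}-I_{j-1}\|_{L^\infty}$ and $|W_j|\le \tfrac{M_b}{m_b}b^{-a_v}\|W_{j-1}-I_{j-1}\|_{L^\infty}$) remain valid for $b=2$, and then combines these with the base case you wrote down. Your ``symmetric decomposition'' in the inductive step for (i) is exactly the $b=2$ instance of the manipulation in the proof of Lemma~\ref{lem:Wextra_sup_av}.

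Your proof of (iv), however, is genuinely different from the paper's. The paper invokes Lemma~\ref{lem:W(b_r)=I} to identify $(-1)^{v+j}I_j(k)$ with the Walsh coefficient $\widehat{b_{v+j}}(k)$, and then shows the latter vanishes by pairing the reflections $b_{v+j}(1-x)=(-1)^{v+1}b_{v+j}(x)$ and $\wal_k(1-x)=(-1)^v\wal_k(x)$. Your route is more self-contained: you prove directly, by induction on $m$, that $W_{2m}(k)$ is symmetric and $W_{2m+1}(k)$ is anti-symmetric about $T/2$ on a single period $[0,T]$, and read off $I_{2m+1}(k)=0$ from the anti-symmetry. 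This avoids any appeal to Bernoulli polynomials, at the cost of a slightly more delicate induction (the odd-to-even step tacitly uses the just-established fact $I_{2m+1}(k)=0$, so the induction must be organized so that this is proved before moving to $W_{2m+2}$; your sketch does this, but it would be worth making the order explicit). Either approach works; the paper's has the advantage of reusing a result that is needed elsewhere anyway, while yours is independent of Section~\ref{sec:coeff-of-Bernoulli}.
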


\begin{proof}
Lemma~\ref{lem:I-value} and Proposition~\ref{prop:Wnorm-dyadic} imply 
\eqref{I_sup-dyadic} and \eqref{W_sup-dyadic} for $j=0$.

Since $\Wextra{0}{k}{x}$ and $\Iextra{0}{k}$ are nonnegative, we have 
\begin{align*}
\|\Wextra{0}{k}{x}-\Iextra{0}{k}\|_{L^\infty}
&\le\max \left( |\Wextranormof{0}{k}{\infty}-\Iextra{0}{k}|,|0-\Iextra{0}{k}|\right)\\
&\le 2^{-\mu(k)-v},
\end{align*}
and thus \eqref{W-I_sup-dyadic} for $j=0$ holds.

For the proof for the case $j > 0$, we note that
parts of the proofs of Lemmas \ref{lem:Wextra_sup_av},
\ref{lem:Iextra_sup} and \ref{lem:Wextra_sup} are valid even in the dyadic case:
For $b=2$ we have
\begin{align*}
|\Wextra{j}{k}{x} - \Iextra{j}{k}|
&\leq \frac{b^{-a_v}}{\mb} \|\Wextranorm{j-1}{k} - \Iextra{j-1}{k}\|_{L^\infty},\\
|\Iextra{j}{k}|
&\leq \frac{b^{-a_v}}{\mb} \|\Wextra{j-1}{k}{y} - \Iextra{j-1}{k}\|_{L^\infty},\\
|\Wextra{j}{k}{x}|
&\leq \frac{\Mb}{\mb}b^{-a_v} \|\Wextranorm{j-1}{k} - \Iextra{j-1}{k}\|_{L^\infty}.
\end{align*}
Combining these inequalities and the case $j=0$,
we have \eqref{W-I_sup-dyadic}, \eqref{I_sup-dyadic} and \eqref{W_sup-dyadic}
for $j>0$.

Now we assume that $j$ is odd and
prove $\Iextra{j}{k} = 0$.
By Lemma~\ref{lem:W(b_r)=I}, we have
\begin{align*}
\widehat{b_{v+j}}(k)=(-1)^{v+j}\Iextra{j}{k}.
\end{align*}
Hence it suffices to show $\widehat{b_{v+j}}(k) = 0$.
Since $j$ is odd, by \eqref{eq:b_r-symmetry}
we have $b_{v+j}(x) = (-1)^{v+1}b_{v+j}(1-x)$.
Furthermore,
$\wal_{k}(x)=(-1)^v\wal_{k}(1-x)$ holds for all but finitely many
$ x \in [0,1)$,
since we have
$\wal_{2^{a_i-1}}(x) = -\wal_{2^{a_i-1}}(1-x)$
for $x\in [0,1)\backslash \{l/2^{a_i} \mid 0 \leq l < 2^{a_i}\}$
and
$\wal_{k}(x) = \prod_{i=1}^v \wal_{2^{a_i-1}}(x)$.
Hence we have
\begin{align*}
\widehat{b_{v+j}}(k)
&=\int_0^{\frac{1}{2}} b_{v+j}(x)\wal_{k}(x)\, dx + \int_{\frac{1}{2}}^1b_{v+j}(x)\wal_{k}(x)\, dx\\
&=\int_0^{\frac{1}{2}} b_{v+j}(x)\wal_{k}(x)\, dx + \int_0^{\frac{1}{2}}b_{v+j}(1-x)\wal_{k}(1-x)\, dx\\
&=\int_0^{\frac{1}{2}} b_{v+j}(x)\wal_{k}(x)\, dx - \int_0^{\frac{1}{2}}b_{v+j}(x)\wal_{k}(x)\, dx\\
&=0. \qedhere
\end{align*}
\end{proof}

Now we are ready to analyze the decay of the Walsh coefficients of Bernoulli polynomials.
For a positive integer $\alpha$ and $k \in \bN$, we define 
\begin{equation}\label{eq:def-muper}
\muper{\alpha}{k}
= \begin{cases}
0 & \text{for $k=0$}, \\
a_1 + \dots + a_v + (\alpha-v)a_v & \text{for $1 \leq v \leq \alpha$},\\
a_1 + \dots + a_\alpha & \text{for $v \geq \alpha$},
\end{cases}
\end{equation}
as in \cite{Dick2009dwc}.
By Lemmas~\ref{lem:W(b_r)=I}, \ref{lem:Iextra_sup}
and \ref{lem:extra_sup-dyadic},
we have the following bound on the Walsh coefficients of Bernoulli polynomials.

\begin{theorem}\label{thm;b_r(k)}
For positive integers $k$ and $r$, we have
\begin{align*}
|\widehat{b_r}(k)|
\begin{cases}
=0 & \text{if $r<v$},\\
=0 & \text{if $r \geq v$, $r-v$ is odd and $b=2$},\\
\displaystyle \leq 2^{-\muper{r}{k}-r}
& \text{if $r \geq v$, $r-v$ is even and $b=2$},\\
\displaystyle \leq \frac{b^{-\muper{r}{k}}}{\mb^{r}} c_{b,v}
& \text{if $r \geq v$ and $b \neq 2$},
\end{cases}
\end{align*}
where $\displaystyle c_{b,v} := 1 + \Cv$.
\end{theorem}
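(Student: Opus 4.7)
The proof reduces entirely to assembling results that are already in hand, so the plan is short.

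First, I would invoke Lemma~\ref{lem:W(b_r)=I} to dispose of the case $r<v$ (which gives $\widehat{b_r}(k)=0$ outright) and to reduce the remaining cases to bounding $|\Iextra{r-v}{k}|$, since $\widehat{b_r}(k)=(-1)^r\Iextra{r-v}{k}$ when $r\ge v$.

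Next, I would split on the parity of $b$. For $b\neq 2$, set $j=r-v\ge 0$ and apply Lemma~\ref{lem:Iextra_sup}:
\[
|\widehat{b_r}(k)| \;=\; |\Iextra{r-v}{k}| \;\le\; \frac{b^{-\mu(k)-(r-v)a_v}}{\mb^{\,v+(r-v)}}\bigl(1+\Cv\bigr) \;=\; \frac{b^{-\mu(k)-(r-v)a_v}}{\mb^{r}}\,c_{b,v}.
\]
For $b=2$, I would use Lemma~\ref{lem:extra_sup-dyadic}: part (iv) immediately yields $\widehat{b_r}(k)=0$ when $r-v$ is odd, while part (ii) gives
\[
|\widehat{b_r}(k)| \;=\; |\Iextra{r-v}{k}| \;\le\; 2^{-(r-v)(a_v+1)-\mu(k)-v}
\]
when $r-v$ is even (the bound in fact holds in both subcases, but the odd case is sharper).

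The only thing left is to match the exponents to $\muper{r}{k}$. Using the definition \eqref{eq:def-muper}, for $1\le v\le r$,
\[
\muper{r}{k} \;=\; a_1+\cdots+a_v+(r-v)a_v \;=\; \mu(k)+(r-v)a_v,
\]
which gives $b^{-\mu(k)-(r-v)a_v}=b^{-\muper{r}{k}}$ and handles the non-dyadic case. In the dyadic case one further checks
\[
(r-v)(a_v+1)+\mu(k)+v \;=\; \bigl(\mu(k)+(r-v)a_v\bigr)+\bigl((r-v)+v\bigr) \;=\; \muper{r}{k}+r,
\]
giving the claimed $2^{-\muper{r}{k}-r}$. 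There is no real obstacle here; the only risk is a bookkeeping slip in aligning $\muper{r}{k}+r$ with the exponent $j(a_v+1)+\mu(k)+v$ coming from Lemma~\ref{lem:extra_sup-dyadic}, so I would verify that identity carefully and then the theorem follows immediately.
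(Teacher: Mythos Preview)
Your proposal is correct and matches the paper's proof, which is simply the sentence ``By Lemmas~\ref{lem:W(b_r)=I}, \ref{lem:Iextra_sup} and \ref{lem:extra_sup-dyadic}, we have the following bound''; you have merely spelled out the exponent bookkeeping that the paper leaves implicit. The only nit is that Lemma~\ref{lem:Iextra_sup} is stated for $j$ a \emph{positive} integer, so the boundary case $r=v$ (i.e., $j=0$) with $b>2$ technically needs Lemma~\ref{lem:I-value}\eqref{I-value1} directly rather than Lemma~\ref{lem:Iextra_sup}; this is trivial (and gives a bound without the factor $c_{b,v}$), and the paper glosses over the same point.
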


\section{The Walsh coefficients of functions in Sobolev spaces}\label{sec:Sobolev}
In this section, we consider functions in the Sobolev space
\begin{align*}
\Sob := \{f \colon [0,1] \to \bR &\mid
\text{$f^{(i)}$: absolutely continuous for $i=0, \dots, \alpha-1$,}\\
&\qquad \qquad  f^{(\alpha)} \in L^2[0,1]\}
\end{align*}
for which $\alpha \geq 1$
as in \cite{Dick2009dwc}.
The inner product is given by
$$
\innerprod{f}{g}
= \sum_{i=0}^{\alpha-1} \int_0^1 f^{(i)}(x) \, dx \int_0^1 g^{(i)}(x) \, dx
	+ \int_0^1 f^{(\alpha)}(x) g^{(\alpha)}(x) \, dx.
$$
and the corresponding norm in $\Sob$ is given by
$\Sobnorm{f} := \sqrt{\innerprod{f}{f}}$.
The space $\Sob$ is a reproducing kernel Hilbert space
(see \cite{Aronszajn1950trk} for general information on reproducing kernel Hilbert spaces).
The reproducing kernel for this space is given by
$$
\mathcal{K}(x,y) = \sum_{i=0}^\alpha b_i(x) b_i(y) - (-1)^\alpha \widetilde{b}_{2\alpha}(x-y),
$$
where
\begin{equation*}
\widetilde{b}_{\alpha}(x-y)
:=
\begin{cases}
b_{\alpha}(|x-y|) & \text{if $\alpha$ is even},\\
(-1)^{1_{x<y}}b_{\alpha}(|x-y|) & \text{if $\alpha$ is odd},
\end{cases}
\end{equation*}
where we define $1_{x<y}$ to be $1$ for $x<y$ and $0$ otherwise,
see 
\cite[Lemma~2.1]{Craven1978/79snd}.
We have
\begin{align}\label{eq:Bernoulli_exp}
f(y)
&=\innerprod{f}{\mathcal{K}(\cdot,y)} \notag \\
&=\sum_{i=0}^{\alpha}\int_0^1 f^{(i)}(x)\, dx \, b_i(y) -(-1)^{\alpha}\int_0^1 f^{(\alpha)}(x) \widetilde{b}_{\alpha}(x-y)\, dx,
\end{align}
see the proof of \cite[Lemma~2.1]{Craven1978/79snd}. This implies that
\begin{align}
\label{eq:WB_formula}
\widehat{f}(k)
=\sum_{i=0}^{\alpha}\int_0^1 f^{(i)}(x)\, dx \, \widehat{b_i}(k) - (-1)^{\alpha}\int_0^1 f^{(\alpha)}(x) \int_0^1\widetilde{b}_{\alpha}(x-y)\overline{\wal_k(y)}\, dy \, dx.
\end{align}
However, we have already proved two formulas for the Walsh coefficients:
For $f \in C^\alpha[0,1]$,
in the case $\alpha \geq v$
we have Theorem~\ref{thm:extra_exp} for $r = \alpha-v$,
which is written as
\begin{align}\label{eq:exp_a>v}
\widehat{f}(k)
&= \sum_{i=v}^{\alpha} (-1)^{i} \Iextra{i-v}{k} \int_0^1 f^{(i)}(x) \, dx \notag \\
&\quad+ (-1)^{\alpha} \int_0^1 f^{(\alpha)}(x) (\Wextra{\alpha-v}{k}{x} - \Iextra{\alpha-v}{k}) \, dx,
\end{align}
and in the case $\alpha < v$ we have Theorem~\ref{thm:Walsh_formula} for $n=\alpha$,
which is written as
\begin{align}\label{eq:exp_a<v}
\widehat{f}(k)
= (-1)^\alpha \int_0^1 f^{(\alpha)}(x)
\overline{\wal_{\summore{\alpha}}(x)} \W{\sumless{\alpha}}{x} \, dx.
\end{align}
In this section, we show that Formulas \eqref{eq:exp_a>v} and \eqref{eq:exp_a<v}
are also valid for $f \in \Sob$
and give an upper bound for the Walsh coefficients of functions in $\Sob$.

\subsection{Formula for the Walsh coefficients of functions in Sobolev spaces}
First we consider the case $\alpha \geq v$.
The following lemma is needed to show that \eqref{eq:exp_a>v} is also valid
for $f \in \Sob$.
\begin{lemma}\label{lem:function_a>v}
Assume $\alpha \geq v$.
Define functions $h_1, h_2 \colon [0,1] \to \bC$ as
\begin{align*}
h_1(x) &:= -\int_0^1\widetilde{b}_{\alpha}(x-y)\overline{\wal_k(y)} \, dy ,\\
h_2(x) &:= \Wextra{\alpha-v}{k}{x} - \Iextra{\alpha-v}{k}.
\end{align*}
Then $h_1(x) = h_2(x)$ holds for all $x \in [0,1]$.
\end{lemma}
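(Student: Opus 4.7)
My plan is to deduce the identity indirectly by observing that $h_1$ and $h_2$ play identical roles in two different formulas for $\widehat{f}(k)$ on the common domain $C^\alpha[0,1]$. Since $C^\alpha[0,1]\subseteq \Sob$, formula \eqref{eq:WB_formula} applies to every $f\in C^\alpha[0,1]$, while Theorem~\ref{thm:extra_exp} (with $r=\alpha-v\geq 0$) gives \eqref{eq:exp_a>v} in the same generality. Rewritten in terms of $h_1$ and $h_2$, these two formulas read
\[
\widehat{f}(k)=\Lambda_j(f)+(-1)^\alpha\int_0^1 f^{(\alpha)}(x)\,h_j(x)\,dx,\qquad j=1,2,
\]
where $\Lambda_1(f):=\sum_{i=0}^\alpha \widehat{b_i}(k)\int_0^1 f^{(i)}(x)\,dx$ and $\Lambda_2(f):=\sum_{i=v}^\alpha (-1)^i \Iextra{i-v}{k}\int_0^1 f^{(i)}(x)\,dx$.

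The first key step is to apply Lemma~\ref{lem:W(b_r)=I}, which gives $\widehat{b_i}(k)=0$ for $i<v$ and $\widehat{b_i}(k)=(-1)^i\Iextra{i-v}{k}$ for $v\leq i\leq \alpha$; hence $\Lambda_1\equiv\Lambda_2$ on $C^\alpha[0,1]$. Subtracting the two formulas for $\widehat{f}(k)$ then yields
\[
\int_0^1 f^{(\alpha)}(x)\bigl(h_1(x)-h_2(x)\bigr)\,dx=0\quad\text{for every }f\in C^\alpha[0,1].
\]
By the Cauchy formula for repeated integration, every $g\in C[0,1]$ arises as $f^{(\alpha)}$ for some $f\in C^\alpha[0,1]$ (take $f(x)=\int_0^x\tfrac{(x-t)^{\alpha-1}}{(\alpha-1)!}g(t)\,dt$), so the displayed integral vanishes against every continuous test function $g$. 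Since $h_1,h_2\in L^\infty[0,1]\subset L^2[0,1]$, density of $C[0,1]$ in $L^2[0,1]$ forces $h_1=h_2$ almost everywhere on $[0,1]$.

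To promote a.e.\ equality to pointwise equality on $[0,1]$, I would verify that both functions are continuous. Continuity of $h_2$ is immediate from the definition of $\Wextranorm{\alpha-v}{k}$ as an iterated integral. Continuity of $h_1$ follows from dominated convergence: the integrand $\widetilde{b}_\alpha(x-y)\overline{\wal_k(y)}$ is uniformly bounded on $[0,1]^2$ and, for each fixed $y$, continuous in $x$ outside the negligible set $\{y\}$. I expect the only step requiring genuine care to be this continuity check when $\alpha$ is odd, since then $\widetilde{b}_\alpha$ has a jump along the diagonal $\{x=y\}$; however the exceptional set reduces to a single point in the $y$-integration, so dominated convergence still applies and closes the argument.
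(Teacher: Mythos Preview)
Your proof is correct and follows essentially the same route as the paper: compare \eqref{eq:WB_formula} with \eqref{eq:exp_a>v} on $C^\alpha[0,1]$, use Lemma~\ref{lem:W(b_r)=I} to match the finite sums, conclude that $h_1-h_2$ is orthogonal to every continuous test function, and then invoke continuity of $h_1$ and $h_2$ to get pointwise equality. The only cosmetic differences are that the paper argues continuity of $h_1$ by an explicit $\epsilon$--$\delta$ estimate rather than dominated convergence, and it skips your intermediate ``a.e.\ equal'' step by directly using that a continuous function orthogonal to all of $C[0,1]$ vanishes.
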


\begin{proof}
For $f \in C^\alpha[0,1]$ both formulas
\eqref{eq:WB_formula} and \eqref{eq:exp_a>v} hold.
Furthermore, by Lemma~\ref{lem:W(b_r)=I}, the first term of each formula is equal.
Hence we have
\begin{align*}
\int_0^1 f^{(\alpha)}(x) h_1(x) \, dx
= \int_0^1 f^{(\alpha)}(x) h_2(x) \, dx
\end{align*}
for all $f \in C^\alpha[0,1]$.
It is well known that if $h \colon [0,1] \to \bC$ is continuous and
$\int_0^1 g(x)h(x) \, dx = 0$ holds for all continuous functions $g \in C^0[0,1]$,
then $h(x) = 0$ holds.
Thus it suffices to show that $h_1$ and $h_2$ 
are continuous.

By definition, $h_2$ is continuous.
Now we prove that $h_1$ is continuous.
Fix $\epsilon > 0$.
Since $b_{\alpha}(z)$ is uniformly continuous on $z \in[0,1]$,
there exists $\delta_1$ such that
$|b_{\alpha}(z)-b_{\alpha}(z')|<\epsilon/2$
for all $z, z' \in [0,1]$ with $|z-z'|<\delta_1$.
Let
$
\delta_2=\min\left( 4^{-1} \epsilon (\max_{z\in [0,1]} |b_{\alpha}(z)|)^{-1}, \delta_1\right).
$
We fix $x \in [0,1]$ and prove $|h_1(x) - h_1(x')| \leq \epsilon$
for all $x' \in [0,1]$ with $|x-x'|<\delta_2$.
Without loss of generality, we can assume that $x < x'$.
Then we have
\begin{align*}
&\left|\int_0^1\widetilde{b}_{\alpha}(x-y)\overline{\wal_k(y)}\, dy-\int_0^1\widetilde{b}_{\alpha }(x'-y)\overline{\wal_k(y)}\, dy\right|\\
&\leq \int_0^x \left|\widetilde{b}_{\alpha}(x-y)-\widetilde{b}_{\alpha}(x'-y)\right|\, dy
+ \int_x^{x'} \left|\widetilde{b}_{\alpha}(x-y)-\widetilde{b}_{\alpha}(x'-y)\right|\, dy\\
&\qquad+ \int_{x'}^1 \left|\widetilde{b}_{\alpha}(x-y)-\widetilde{b}_{\alpha}(x'-y)\right|\, dy\\
&\leq x\max_{y\in [0,x]}|b_{\alpha}(x-y)-b_{\alpha}(x'-y)|+
(x'-x)\max_{y\in [x,x']}\left( |b_{\alpha }(y-x)|+|b_{\alpha }(x'-y)|\right) \\
&\qquad +(1-x')\max_{y\in [x',1]}|b_{\alpha}(y-x)-b_{\alpha}(y-x')|\\
&< x\epsilon/2 + 2\delta_2 \max_{z\in [0,1]} |b_{\alpha}(z)| + (1-x')\epsilon/2 \\
&< \epsilon,
\end{align*}
which implies the continuity of $h_1$.
\end{proof}

The following result follows now from
the above lemma, Lemma~\ref{lem:W(b_r)=I}
and \eqref{eq:WB_formula}.
\begin{proposition}\label{prop:exp_a>v}
Assume $\alpha \geq v$. Then for $f \in \Sob$ we have
\begin{align*}
\widehat{f}(k)
&= \sum_{i=v}^{\alpha} (-1)^{i} \Iextra{i-v}{k} \int_0^1 f^{(i)}(x) \, dx \notag \\
&\quad+ (-1)^{\alpha} \int_0^1 f^{(\alpha)}(x) (\Wextra{\alpha-v}{k}{x} - \Iextra{\alpha-v}{k}) \, dx.
\end{align*}
\end{proposition}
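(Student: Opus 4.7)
The plan is to derive the proposition directly from the reproducing-kernel expansion \eqref{eq:WB_formula}, which is what the Bernoulli-type formulas give us for the entire Sobolev space $\Sob$. The point is that the two ingredients one needs to convert \eqref{eq:WB_formula} into the claimed expression have already been set up: Lemma~\ref{lem:W(b_r)=I} evaluates the Walsh coefficients of the Bernoulli polynomials in terms of $\Iextra{j}{k}$, and Lemma~\ref{lem:function_a>v} identifies the kernel integral against $\overline{\wal_k}$ with the function $\Wextra{\alpha-v}{k}{\cdot}-\Iextra{\alpha-v}{k}$.

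First I would take \eqref{eq:WB_formula} as given for $f \in \Sob$ (it is the specialization of the reproducing-kernel identity \eqref{eq:Bernoulli_exp} obtained by integrating against $\overline{\wal_k}$, so no extra smoothness is needed) and rewrite its first sum using Lemma~\ref{lem:W(b_r)=I}. Since $\alpha \geq v$, the lemma gives $\widehat{b_i}(k) = 0$ for $0 \leq i < v$ and $\widehat{b_i}(k) = (-1)^i \Iextra{i-v}{k}$ for $v \leq i \leq \alpha$. Consequently the Bernoulli-coefficient sum in \eqref{eq:WB_formula} collapses to precisely $\sum_{i=v}^{\alpha} (-1)^i \Iextra{i-v}{k} \int_0^1 f^{(i)}(x)\,dx$, which is the first term in the proposition.

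Next I would apply Lemma~\ref{lem:function_a>v} to the remaining piece of \eqref{eq:WB_formula}: that lemma asserts the pointwise equality $-\int_0^1 \widetilde{b}_\alpha(x-y)\overline{\wal_k(y)}\,dy = \Wextra{\alpha-v}{k}{x} - \Iextra{\alpha-v}{k}$ on $[0,1]$. Substituting this identity inside the outer integral against $f^{(\alpha)}$ turns the tail of \eqref{eq:WB_formula} into $(-1)^\alpha \int_0^1 f^{(\alpha)}(x)(\Wextra{\alpha-v}{k}{x}-\Iextra{\alpha-v}{k})\,dx$, which is the second term in the proposition. Summing the two rewritten pieces completes the proof.

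There is no real obstacle left: since Lemma~\ref{lem:function_a>v} has already absorbed the subtle step (equating the two representations of the kernel, which was checked by pairing both sides with smooth test functions $f^{(\alpha)}$ for $f \in C^\alpha[0,1]$ and invoking continuity to upgrade the weak equality to a pointwise one), the present argument is just bookkeeping. The only thing worth double-checking is the sign convention: Lemma~\ref{lem:W(b_r)=I} outputs $(-1)^i \Iextra{i-v}{k}$ with exponent $i$ rather than $v + (i-v)$, which matches the signs in the target formula, and the minus sign in front of the integral in \eqref{eq:WB_formula} combines with the minus sign inside $h_1$ to reproduce exactly the sign in the proposition.
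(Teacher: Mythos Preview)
Your proposal is correct and matches the paper's approach exactly: the paper's one-line proof simply cites Lemma~\ref{lem:function_a>v}, Lemma~\ref{lem:W(b_r)=I}, and \eqref{eq:WB_formula}, and you have spelled out how those three ingredients combine. The sign bookkeeping you flag is also right.
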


Now we treat the case $\alpha <v$.
Note that $\overline{\wal_{\summore{\alpha}}(\cdot)} \Wnorm{\sumless{\alpha}}$
is continuous since
$\Wnorm{\sumless{\alpha}}$ takes zero value on the set where
$\wal_{\summore{\alpha}}(\cdot)$ is not continuous.
In the same way as the case $\alpha \geq v$,
we have the following.

\begin{proposition}\label{prop:exp_a<v}
Assume $\alpha < v$. Then we have 
\begin{align*}
- \int_0 ^1\widetilde{b}_{\alpha}(x-y)\overline{\wal_k(y)} \, dy
= \overline{\wal_{\summore{\alpha}}(x)} \W{\sumless{\alpha}}{x}.
\end{align*}
In particular, for $f \in \Sob$ we have
$$
\widehat{f}(k)=(-1)^{\alpha} \int_0^1 f^{(\alpha)}(x)\overline{\wal_{\summore{\alpha}}(x)} \W{\sumless{\alpha}}{x} \, dx.
$$
\end{proposition}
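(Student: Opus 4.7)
The approach parallels the proof of Proposition~\ref{prop:exp_a>v}: first establish the pointwise identity of the two integral kernels by testing against smooth functions, then combine with the reproducing kernel expansion \eqref{eq:WB_formula} to obtain the stated formula on $\Sob$. Set $h_1(x) := -\int_0^1 \widetilde{b}_\alpha(x-y)\overline{\wal_k(y)}\,dy$ and $h_3(x) := \overline{\wal_{\summore{\alpha}}(x)} \W{\sumless{\alpha}}{x}$. Both are continuous on $[0,1]$: continuity of $h_1$ was proved inside Lemma~\ref{lem:function_a>v} (the $\epsilon$-$\delta$ argument there depends only on the uniform continuity of $b_\alpha$ and works regardless of whether $\alpha \geq v$ or $\alpha < v$), while continuity of $h_3$ is exactly the observation in the remark immediately preceding the statement (the zeros of $\Wnorm{\sumless{\alpha}}$ cancel the jumps of $\wal_{\summore{\alpha}}$).

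For any $f \in C^\alpha[0,1] \subset \Sob$, formula \eqref{eq:WB_formula} applies, and all its boundary terms $\widehat{b_i}(k)$ with $0 \leq i \leq \alpha$ vanish: by Lemma~\ref{lem:W(b_r)=I} for $1 \leq i \leq \alpha < v$, and by orthogonality of the constant $b_0 \equiv 1$ against $\wal_k$ for $i = 0$ (note $k > 0$, since the hypothesis $\alpha < v$ forces $v \geq 1$). Thus \eqref{eq:WB_formula} reduces to $\widehat{f}(k) = (-1)^\alpha \int_0^1 f^{(\alpha)}(x) h_1(x)\,dx$. On the other hand, Theorem~\ref{thm:Walsh_formula} applied with $n = \alpha$ (i.e.\ equation \eqref{eq:exp_a<v}) yields $\widehat{f}(k) = (-1)^\alpha \int_0^1 f^{(\alpha)}(x) h_3(x)\,dx$. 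Subtracting, $\int_0^1 f^{(\alpha)}(x)(h_1(x) - h_3(x))\,dx = 0$ for every $f \in C^\alpha[0,1]$.

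Since any $g \in C^0[0,1]$ arises as $f^{(\alpha)}$ for some $f \in C^\alpha[0,1]$ (take $\alpha$ iterated antiderivatives), and $h_1 - h_3$ is continuous, the standard density argument already invoked in Lemma~\ref{lem:function_a>v} forces $h_1 \equiv h_3$ on $[0,1]$, which is the first assertion. For the ``in particular'' part, substitute $h_1 = h_3$ back into \eqref{eq:WB_formula} for arbitrary $f \in \Sob$; the sum over $i$ again vanishes for the same reason, leaving precisely the displayed formula. There is no real obstacle beyond the continuity check of $h_1$, which transfers verbatim from Lemma~\ref{lem:function_a>v}.
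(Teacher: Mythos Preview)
Your proof is correct and is exactly the approach the paper intends: it says only ``In the same way as the case $\alpha \geq v$,'' and you have filled in precisely those details (continuity of both kernels, vanishing of the $\widehat{b_i}(k)$ for $i\le\alpha<v$, the testing-against-$C^\alpha$ argument from Lemma~\ref{lem:function_a>v}, and substitution back into \eqref{eq:WB_formula}).
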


\subsection{Upper bound on the Walsh coefficients of functions in Sobolev spaces}
In this subsection, we give a bound on the Walsh coefficients
of functions in $\Sob$.

By Propositions~\ref{prop:exp_a>v} and \ref{prop:exp_a<v}, for $f \in \Sob$ we have
\begin{align*}
|\widehat{f}(k)|
&\leq \sum_{i=v}^{\alpha} |\Iextra{i-v}{k}| \left|\int_0^1 f^{(i)}(x) \, dx \right|
+ N_\alpha \int_0^1 |f^{(\alpha)}(x)| \, dx,
\end{align*}
where $N_\alpha = \|\Wextranorm{\alpha-v}{k} - \Iextra{\alpha-v}{k}\|_{L^\infty}$
if $\alpha \geq v$ and 
$N_\alpha = \Wnormof{\sumless{\alpha}}{\infty}$ otherwise.
Thus, by Propositions \ref{prop:Wnorm-infty} and \ref{prop:Wnorm-dyadic}
and Lemmas \ref{lem:Wextra_sup_av}, \ref{lem:Iextra_sup} and \ref{lem:extra_sup-dyadic},
we have the following.
\begin{theorem}\label{thm:Sob-Walsh-bound}
Let $\alpha$ and $k$ be positive integers.
Assume $f \in \Sob$.
If $b>2$, we have
\begin{align*}
|\widehat{f}(k)|
&\leq \sum_{i=v}^{\alpha} \left|\int_0^1 f^{(i)}(x) \, dx\right| \frac{b^{-\muper{i}{k}}}{\mb^{i}} \left(1 + \Cv\right) \notag \\
&\qquad+ \int_0^1 |f^{(\alpha)}(x)| \, dx \frac{b^{-\muper{\alpha}{k}}}{\mb^{\alpha}} \left(\Mb + \Cv\right),
\end{align*}
and if $b=2$, we have
\begin{align*}
|\widehat{f}(k)|
\leq \sum_{\substack{v \leq i \leq \alpha \\ i = v \text{ mod $2$}}}
\left|\int_0^1 f^{(i)}(x) \, dx\right| \frac{2^{-\muper{i}{k}}}{2^{i}} + \int_0^1 |f^{(\alpha)}(x)| \, dx \frac{2^{-\muper{\alpha}{k}}}{2^{\alpha-1}},
\end{align*}
where for $v>\alpha$ the empty sum $\sum_{i=v}^\alpha$
is defined to be $0$.
\end{theorem}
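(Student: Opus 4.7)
The plan is to combine the two integral formulas in Propositions~\ref{prop:exp_a>v} and \ref{prop:exp_a<v} with the supremum estimates already established in Section~\ref{sec:coeff-of-Bernoulli}. First I would split the argument according to whether $\alpha\ge v$ or $\alpha<v$. In the former case, Proposition~\ref{prop:exp_a>v} and the triangle inequality give
\begin{align*}
|\widehat{f}(k)|
&\le \sum_{i=v}^{\alpha}|\Iextra{i-v}{k}|\left|\int_0^1 f^{(i)}(x)\,dx\right|\\
&\qquad + \|\Wextranorm{\alpha-v}{k}-\Iextra{\alpha-v}{k}\|_{L^\infty}\int_0^1|f^{(\alpha)}(x)|\,dx,
\end{align*}
while in the latter case the sum is empty and Proposition~\ref{prop:exp_a<v} together with $|\overline{\wal_{\summore{\alpha}}(x)}|=1$ yields
\[
|\widehat{f}(k)|\le \Wnormof{\sumless{\alpha}}{\infty}\int_0^1|f^{(\alpha)}(x)|\,dx.
\]

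For $b>2$ I would then invoke Lemma~\ref{lem:Iextra_sup} to bound each $|\Iextra{i-v}{k}|$ by $b^{-\mu(k)-(i-v)a_v}\mb^{-i}(1+\Cv)$, using the identity $\mu(k)+(i-v)a_v=\muper{i}{k}$ valid for $v\le i\le\alpha$; the base case $i=v$, where Lemma~\ref{lem:Iextra_sup} as stated does not apply, is handled directly by Lemma~\ref{lem:I-value}(i), which even yields a smaller constant. For the residual term I would use Lemma~\ref{lem:Wextra_sup_av} when $\alpha\ge v$ (giving the factor $1+\Cv$) and Proposition~\ref{prop:Wnorm-infty} applied to $\sumless{\alpha}$ when $\alpha<v$ (giving $\Mb+\Cv[\alpha]$, since $v(\sumless{\alpha})=\alpha$). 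Because $\Mb\ge 1$ and the map $v\mapsto\Cv$ is nondecreasing, both residual bounds are dominated by the single uniform factor $\Mb+\Cv$ appearing in the statement. The dyadic case proceeds by the analogous route, using Lemma~\ref{lem:extra_sup-dyadic}(i)--(iii) and Proposition~\ref{prop:Wnorm-dyadic} in place of Lemmas~\ref{lem:Wextra_sup_av}, \ref{lem:Iextra_sup} and Proposition~\ref{prop:Wnorm-infty}; the genuinely new ingredient is part~(iv) of Lemma~\ref{lem:extra_sup-dyadic}, which gives $\Iextra{j}{k}=0$ whenever $j$ is odd, and this precisely collapses the sum over $v\le i\le\alpha$ to the restricted sum over $i\equiv v\pmod 2$ in the statement.

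I do not anticipate a real obstacle: the proof is essentially a bookkeeping exercise gluing Propositions~\ref{prop:exp_a>v}--\ref{prop:exp_a<v} to the sup-norm bounds of Section~\ref{sec:coeff-of-Bernoulli}. The only point deserving care is the exponent arithmetic matching the definition \eqref{eq:def-muper} of $\muper{\cdot}{\cdot}$, together with arranging a single constant ($\Mb+\Cv$ in the non-dyadic case, $2^{-\alpha+1}$ in the dyadic one) that simultaneously dominates the estimates coming from both the $\alpha\ge v$ and $\alpha<v$ sub-cases.
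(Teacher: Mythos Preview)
Your proposal is correct and follows essentially the same route as the paper: the paper derives the inequality
\[
|\widehat f(k)|\le\sum_{i=v}^{\alpha}|\Iextra{i-v}{k}|\left|\int_0^1 f^{(i)}\right|+N_\alpha\int_0^1|f^{(\alpha)}|,
\]
with $N_\alpha=\|\Wextranorm{\alpha-v}{k}-\Iextra{\alpha-v}{k}\|_{L^\infty}$ or $\Wnormof{\sumless{\alpha}}{\infty}$, and then invokes the same estimates (Propositions~\ref{prop:Wnorm-infty}, \ref{prop:Wnorm-dyadic}, Lemmas~\ref{lem:Wextra_sup_av}, \ref{lem:Iextra_sup}, \ref{lem:extra_sup-dyadic}) you list. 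Your write-up is in fact more explicit than the paper's on two points it leaves implicit: the separate treatment of $|\Iextra{0}{k}|$ via Lemma~\ref{lem:I-value}\eqref{I-value1}, and the observation that a single constant ($\Mb+\Cv$, respectively $2^{-\alpha+1}$) dominates both sub-cases.
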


For an integer $i$ with $v \leq i \leq \alpha$,
$\muper{i}{k} \geq \mu_\alpha(k)$ holds for all $k \in \bN$ 
by the definitions of $\muper{i}{k}$ and $\mu_\alpha(k)$.
Thus, applying H\"{o}lder's inequality to Theorem~\ref{thm:Sob-Walsh-bound}, we obtain the following corollary.
\begin{corollary}\label{cor:bound-Sob-with-norm}
Let $\alpha$ and $k$ be positive integers.
Then, for all $f \in \Sob$, we have
\begin{align*}
|\widehat{f}(k)| \leq b^{-\mu_{\alpha}(k)} C_{b, \alpha, q} \|f\|_{p, \alpha},
\end{align*}
where
$\|f\|_{p,\alpha}
:= \left(\sum_{i=0}^\alpha \left|\int_0^1 f^{(i)}(x) \, dx \right|^p + \int_0^1 |f^{(\alpha)}(x)|^p \, dx \right)^{1/p}$,
where $1 \leq p, q \leq \infty$ are real numbers with $1/p + 1/q =1$, and
$$
C_{b, \alpha, q}
:= \left(\sum_{i=1}^{\alpha} \frac{1}{\mb^{iq}}\left(1+\frac{b\mb}{b-\Mb}\right)^q + \frac{1}{\mb^{\alpha q}}\left(\Mb +\frac{b\mb}{b-\Mb}\right)^q \right)^{1/q}
$$
for $b > 2$ and
$C_{2, \alpha, q}
:= (\sum_{i=1}^{\alpha} 2^{-iq} + 2^{-(\alpha-1) q} )^{1/q}$
for $b=2$.
\end{corollary}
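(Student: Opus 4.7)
My plan is to derive the corollary directly from Theorem~\ref{thm:Sob-Walsh-bound} via three routine steps. First, I would bound the $v$-dependent constants: since $1-(\Mb/b)^v\in(0,1)$, we have $\Cv\le\tfrac{b\mb}{b-\Mb}$, so $1+\Cv\le 1+\tfrac{b\mb}{b-\Mb}$ and $\Mb+\Cv[\alpha]\le\Mb+\tfrac{b\mb}{b-\Mb}$. This uniformization is precisely what makes the final constant $C_{b,\alpha,q}$ independent of $v=v(k)$.

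Second, I would factor out the common exponent $b^{-\mu_\alpha(k)}$. From the definitions of $\muper{i}{k}$ in \eqref{eq:def-muper} and of $\mu_\alpha(k)$, in the range $v\le i\le \alpha$ the difference $\muper{i}{k}-\mu_\alpha(k)=(i-v)a_v\ge 0$, while in the case $v>\alpha$ we have $\muper{\alpha}{k}=\mu_\alpha(k)=a_1+\cdots+a_\alpha$ exactly. So $b^{-\muper{i}{k}}\le b^{-\mu_\alpha(k)}$ in every term of the bound, and the factor $b^{-\mu_\alpha(k)}$ pulls out uniformly.

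Third, I would apply Hölder's inequality. Jensen on the probability space $[0,1]$ gives $\int_0^1|f^{(\alpha)}(x)|\,dx\le\bigl(\int_0^1|f^{(\alpha)}(x)|^p\,dx\bigr)^{1/p}$, producing the $L^p$ contribution to $\|f\|_{p,\alpha}$. Then Hölder on the finite index set $\{v,\dots,\alpha\}$ augmented by one extra slot for $f^{(\alpha)}$ pairs the $\ell^q$-norm of the coefficients against the $\ell^p$-norm of the function-dependent quantities. Enlarging the coefficient index set to $\{1,\dots,\alpha\}$ (adding nonnegative terms) yields exactly $C_{b,\alpha,q}^q$ under the $q$-th root, while enlarging the function index set to $\{0,\dots,\alpha\}$ yields $\|f\|_{p,\alpha}^p$ under the $p$-th root. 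The dyadic case $b=2$ is entirely analogous using the second bound in Theorem~\ref{thm:Sob-Walsh-bound}, with no $\Cv$-type step needed. There is no substantive obstacle here---the only subtlety is verifying that the edge case $v>\alpha$ (empty sum in Theorem~\ref{thm:Sob-Walsh-bound}) still delivers the stated inequality, which it does thanks to the identity $\muper{\alpha}{k}=\mu_\alpha(k)$ noted in the second step.
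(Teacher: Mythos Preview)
Your proposal is correct and follows essentially the same route as the paper. The paper's own argument is extremely terse---it just observes $\muper{i}{k}\ge\mu_\alpha(k)$ for $v\le i\le\alpha$ and then invokes H\"older on Theorem~\ref{thm:Sob-Walsh-bound}---and you have correctly spelled out the implicit steps (uniformizing the $v$-dependent constant via $1-(\Mb/b)^v\le 1$, handling the $v>\alpha$ case, and enlarging index sets after H\"older).
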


\begin{remark}
This corollary can be generalized to tensor product spaces,
for which the reproducing kernel is just the product of the one-dimensional kernel,
as \cite[Section~14.6]{Dick2010dna}.
\end{remark}

\begin{remark}
As shown in Proposition \ref{prop:exp_a<v},
we can apply Theorem~\ref{thm:Walsh-bound-normal} to functions in $\Sob$
as well as Corollary~\ref{cor:bound-Sob-with-norm}.
We can choose the suitable bound in accordance with the situation, such as what kind of norm we need.
\end{remark}

\section{The Walsh coefficients of smooth periodic functions}\label{sec:periodic}
As in \cite{Dick2009dwc},
we consider a subset of the previous reproducing kernel Hilbert space,
namely, let $\Sobper$ be the space of all functions $f \in \Sob$
which satisfy the condition
$\int_0^1 f^{(i)}(x) \, dx = 0$ for $0 \leq i < \alpha$.
This space also has a reproducing kernel, which is given by
$$
\mathcal{K}_{\alpha, \mathrm{per}}(x,y)
= b_\alpha(x) b_\alpha(y) + (-1)^{\alpha+1} \widetilde{b}_{2\alpha}(x-y),
$$
and the inner product is given by
$$
\innerprodper{f}{g} = \int_0^1 f^{(\alpha)}(x) g^{(\alpha)}(x) \, dx,
$$
see \cite[(10.2.4)]{Wahba1990smo}.
We also have the representation
\begin{align*}
f(y)
&= \innerprodper{f}{\mathcal{K}_{\alpha, \mathrm{per}}(\cdot,y)}\\
&= \int_0^1 f^{(\alpha)}(x)\, dx \, b_\alpha(y) + (-1)^{\alpha+1}\int_0^1 f^{(\alpha)}(x) \widetilde{b}_{\alpha}(x-y)\, dx
\end{align*}
and
\begin{align*}
\widehat{f}(k)
=\int_0^1 f^{(\alpha)}(x)\, dx \, \widehat{b_\alpha}(k) + (-1)^{\alpha+1}\int_0^1 f^{(\alpha)}(x) \int_0^1\widetilde{b}_{\alpha}(x-y)\overline{\wal_k(y)}\, dx\, dy.
\end{align*}

By the condition $\int_0^1 f^{(i)}(x) \, dx = 0$ for $0 \leq i < \alpha$ and Propositions \ref{prop:exp_a>v} and \ref{prop:exp_a<v},
we have the following.
\begin{lemma}
Let $\alpha$ and $k$ be positive integers.
Assume $f \in \Sobper$.
If $\alpha \geq v$, then we have
\begin{align*}
\widehat{f}(k)
= (-1)^{\alpha} \int_0^1 f^{(\alpha)}(x) \Wextra{\alpha-v}{k}{x} \, dx.
\end{align*}
If $\alpha < v$, then we have
$$
\widehat{f}(k)
=(-1)^{\alpha} \int_0^1 f^{(\alpha)}(x)\overline{\wal_{\summore{\alpha}}(x)} \W{\sumless{\alpha}}{x} \, dx.
$$
\end{lemma}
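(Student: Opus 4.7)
The plan is to simply specialize the two formulas already established in Propositions~\ref{prop:exp_a>v} and \ref{prop:exp_a<v} by exploiting the defining condition of $\Sobper$, namely that $\int_0^1 f^{(i)}(x)\,dx = 0$ for $0 \le i < \alpha$. No new analytic machinery is needed; the work is a bookkeeping argument that isolates the surviving term in each formula.

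First I would treat the case $\alpha \ge v$. Applying Proposition~\ref{prop:exp_a>v} to $f \in \Sobper \subset \Sob$ gives
\[
\widehat{f}(k)
= \sum_{i=v}^{\alpha} (-1)^{i} \Iextra{i-v}{k} \int_0^1 f^{(i)}(x)\,dx
+ (-1)^{\alpha} \int_0^1 f^{(\alpha)}(x)\bigl(\Wextra{\alpha-v}{k}{x} - \Iextra{\alpha-v}{k}\bigr)\,dx.
\]
Since $v \ge 1$ and the periodicity condition forces $\int_0^1 f^{(i)}(x)\,dx = 0$ for every $i$ with $v \le i \le \alpha-1$, all terms of the sum vanish except the one corresponding to $i = \alpha$. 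That surviving term equals $(-1)^{\alpha} \Iextra{\alpha-v}{k} \int_0^1 f^{(\alpha)}(x)\,dx$, and when added to the remaining integral it cancels the $-\Iextra{\alpha-v}{k}$ contribution inside the parentheses, leaving precisely
\[
\widehat{f}(k) = (-1)^{\alpha} \int_0^1 f^{(\alpha)}(x)\, \Wextra{\alpha-v}{k}{x}\,dx,
\]
as claimed.

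For the case $\alpha < v$, Proposition~\ref{prop:exp_a<v} already yields the desired identity verbatim, with no further manipulation required, since that formula does not involve any $\int_0^1 f^{(i)}(x)\,dx$ factors and therefore is automatically inherited by the subspace $\Sobper \subset \Sob$.

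There is essentially no obstacle: the only subtle point is verifying that the periodicity hypothesis really kills the entire sum in the first case, which relies on the range $v \le i \le \alpha-1$ lying inside $\{0,1,\dots,\alpha-1\}$ (true because $v \ge 1$). Everything else is direct substitution.
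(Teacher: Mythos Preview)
Your proposal is correct and follows exactly the paper's approach: the paper's proof is a one-line appeal to Propositions~\ref{prop:exp_a>v} and \ref{prop:exp_a<v} together with the vanishing condition $\int_0^1 f^{(i)}(x)\,dx = 0$ for $0 \le i < \alpha$, and you have spelled out precisely the cancellation that this entails.
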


This lemma, Propositions \ref{prop:Wnorm-infty} and \ref{prop:Wnorm-dyadic} and
Lemmas \ref{lem:Iextra_sup} and \ref{lem:extra_sup-dyadic} imply the following bound.
\begin{theorem}\label{thm:bound-Sobper}
Let $\alpha$ and $k$ be positive integers.
Assume $f \in \Sobper$.
If $b>2$, then we have
\begin{align*}
|\widehat{f}(k)|
&\leq \int_0^1 |f^{(\alpha)}(x)| \, dx \frac{b^{-\muper{\alpha}{k}}}{\mb^{\alpha}} \Mb\left(1 + \Cv\right).
\end{align*}
If $b=2$, then we have 
\begin{align*}
|\widehat{f}(k)|
&\leq \int_0^1 |f^{(\alpha)}(x)| \, dx \frac{b^{-\muper{\alpha}{k}}}{2^{\alpha-1}}.
\end{align*}
\end{theorem}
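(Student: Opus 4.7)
The plan is to apply the immediately preceding lemma to express $\widehat{f}(k)$ as an integral involving $f^{(\alpha)}$ and either $\Wextranorm{\alpha-v}{k}$ (when $\alpha \geq v$) or $\overline{\wal_{\summore{\alpha}}(\cdot)} \W{\sumless{\alpha}}{\cdot}$ (when $\alpha < v$), take absolute values inside the integral, and then bound the non-$f^{(\alpha)}$ factor in $L^\infty$. This reduces everything to uniform bounds on $\Wextranorm{\alpha-v}{k}$ or $\W{\sumless{\alpha}}{\cdot}$ that are already established earlier in the paper.

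For the case $b>2$, I would split into three subcases. When $\alpha > v$, Lemma~\ref{lem:Wextra_sup} applied with $j = \alpha-v$ gives $\Wextranormof{\alpha-v}{k}{\infty} \leq \frac{b^{-\mu(k)-(\alpha-v)a_v}}{\mb^{\alpha}} \Mb(1+\Cv)$, and the exponent $\mu(k)+(\alpha-v)a_v$ equals $\muper{\alpha}{k}$ by definition~\eqref{eq:def-muper}. When $\alpha = v$, I would instead use Proposition~\ref{prop:Wnorm-infty} (since $\Wextra{0}{k}{x}=\W{k}{x}$ and Lemma~\ref{lem:Wextra_sup} requires $j\geq 1$), which yields $\Wnormof{k}{\infty}\leq \frac{b^{-\mu(k)}}{\mb^v}(\Mb+\Cv)$. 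When $\alpha < v$, Proposition~\ref{prop:Wnorm-infty} applied to $\sumless{\alpha}$ (which has Hamming weight $\alpha$) gives $\Wnormof{\sumless{\alpha}}{\infty}\leq \frac{b^{-\muper{\alpha}{k}}}{\mb^\alpha}(\Mb+\Cv[\alpha])$. To unify these three bounds under the single form $\frac{b^{-\muper{\alpha}{k}}}{\mb^\alpha}\Mb(1+\Cv)$, I will use the elementary facts that $\Cv[\alpha]\leq \Cv[v]$ (monotonicity in the exponent since $\Mb/b<1$) and $\Mb\geq 1$ for all $b\geq 2$ (from $\Mb=2$ when $b$ is even and $\Mb=2\cos(\pi/(2b))\geq 2\cos(\pi/6)=\sqrt{3}$ when $b\geq 3$ is odd), so that $\Mb+\Cv[\alpha]\leq \Mb+\Cv\leq \Mb(1+\Cv)$.

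For the dyadic case $b=2$ the argument is parallel but uses the sharper bounds. When $\alpha>v$, Lemma~\ref{lem:extra_sup-dyadic}\eqref{W_sup-dyadic} gives $\Wextranormof{\alpha-v}{k}{\infty}\leq 2^{-(\alpha-v)(a_v+1)-\mu(k)-v+1}$; a direct computation of the exponent shows this equals $2^{-\muper{\alpha}{k}-\alpha+1}$. When $\alpha\leq v$, Proposition~\ref{prop:Wnorm-dyadic} with $q=\infty$ (applied to $k$ if $\alpha=v$, or to $\sumless{\alpha}$ otherwise) yields the same bound $2^{-\muper{\alpha}{k}-\alpha+1}=b^{-\muper{\alpha}{k}}/2^{\alpha-1}$. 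Integrating against $|f^{(\alpha)}|$ produces exactly the stated inequality.

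Since every ingredient is already available, there is no genuine obstacle here; the only non-trivial point is bookkeeping, namely verifying that the three formulas for $L^\infty$ bounds collapse into a single clean expression involving $\muper{\alpha}{k}$ and $\Cv[v]$. I would therefore write the proof compactly, presenting the non-dyadic case in a single display, remarking on the inequalities $\Cv[\alpha]\leq \Cv[v]$ and $\Mb\geq 1$ used in the subcase $\alpha<v$, and then noting that the dyadic case follows analogously via Lemma~\ref{lem:extra_sup-dyadic} and Proposition~\ref{prop:Wnorm-dyadic}.
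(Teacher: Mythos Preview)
Your proposal is correct and follows essentially the same route as the paper, which merely cites the preceding lemma together with Propositions~\ref{prop:Wnorm-infty}, \ref{prop:Wnorm-dyadic} and the $L^\infty$ bounds on $\Wextranorm{j}{k}$ without spelling out the case split. Your extra bookkeeping---the monotonicity $\Cv[\alpha]\leq\Cv[v]$ and the inequality $\Mb\geq 1$ used to absorb the $\alpha\leq v$ subcases into the single form $\Mb(1+\Cv)$---is exactly the detail the paper suppresses, and you have correctly identified Lemma~\ref{lem:Wextra_sup} (rather than the paper's cited Lemma~\ref{lem:Iextra_sup}) as the relevant bound for $\Wextranormof{\alpha-v}{k}{\infty}$ when $\alpha>v$.
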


\section*{Acknowledgment}
The work of the authors was supported by the Program for Leading Graduate Schools, MEXT, Japan.
The first author was partially supported by
Grant-in-Aid for JSPS Fellows Grant number 15J05380.

\bibliography{SuzukiYoshiki}

\begin{thebibliography}{10}

\bibitem{Abramowitz1992hmf}
Milton Abramowitz and Irene~A. Stegun, editors.
\newblock {\em {H}andbook of {M}athematical {F}unctions with {F}ormulas,
  {G}raphs, and {M}athematical {T}ables}.
\newblock Dover Publications, Inc., New York, 1992.
\newblock Reprint of the 1972 edition.

\bibitem{Aronszajn1950trk}
Nachman Aronszajn.
\newblock Theory of reproducing kernels.
\newblock {\em Trans. Amer. Math. Soc.}, 68:337--404, 1950.

\bibitem{Baldeaux2009asf}
Jan Baldeaux, Josef Dick, and Peter Kritzer.
\newblock On the approximation of smooth functions using generalized digital
  nets.
\newblock {\em J. Complexity}, 25(6):544--567, 2009.

\bibitem{Chrestenson1955cgw}
Hubert~E. Chrestenson.
\newblock A class of generalized {W}alsh functions.
\newblock {\em Pacific J. Math.}, 5:17--31, 1955.

\bibitem{Craven1978/79snd}
Peter Craven and Grace Wahba.
\newblock Smoothing noisy data with spline functions. {E}stimating the correct
  degree of smoothing by the method of generalized cross-validation.
\newblock {\em Numer. Math.}, 31(4):377--403, 1978/79.

\bibitem{Dick2007ecq}
Josef Dick.
\newblock Explicit constructions of quasi-{M}onte {C}arlo rules for the
  numerical integration of high-dimensional periodic functions.
\newblock {\em SIAM J. Numer. Anal.}, 45(5):2141--2176, 2007.

\bibitem{Dick2008wsc}
Josef Dick.
\newblock Walsh spaces containing smooth functions and quasi-{M}onte {C}arlo
  rules of arbitrary high order.
\newblock {\em SIAM J. Numer. Anal.}, 46(3):1519--1553, 2008.

\bibitem{Dick2009dwc}
Josef Dick.
\newblock The decay of the {W}alsh coefficients of smooth functions.
\newblock {\em Bull. Aust. Math. Soc.}, 80(3):430--453, 2009.

\bibitem{Dick2014dbi}
Josef Dick.
\newblock Discrepancy bounds for infinite-dimensional order two digital
  sequences over {$\mathbb{F}_2$}.
\newblock {\em J. Number Theory}, 136:204--232, 2014.

\bibitem{Dick2008afu}
Josef Dick, Peter Kritzer, and Frances~Y. Kuo.
\newblock Approximation of functions using digital nets.
\newblock In {\em {M}onte {C}arlo and {Q}uasi-{M}onte {C}arlo {M}ethods 2006},
  pages 275--297. Springer, Berlin, 2008.

\bibitem{Dick2010dna}
Josef Dick and Friedrich Pillichshammer.
\newblock {\em Digital {N}ets and {S}equences: Discrepancy {T}heory and
  {Q}uasi-Monte Carlo {I}ntegration}.
\newblock Cambridge University Press, Cambridge, 2010.

\bibitem{Dick2014odb}
Josef Dick and Friedrich Pillichshammer.
\newblock Optimal {$\mathscr{L}_2$} discrepancy bounds for higher order digital
  sequences over the finite field {$\mathbb{F}_2$}.
\newblock {\em Acta Arith.}, 162(1):65--99, 2014.

\bibitem{Fine1949wf}
Nathan~J. Fine.
\newblock On the {W}alsh functions.
\newblock {\em Trans. Amer. Math. Soc.}, 65:372--414, 1949.

\bibitem{Golubov1991wsa}
B.~Golubov, A.~Efimov, and V.~Skvortsov.
\newblock {\em Walsh {S}eries and {T}ransforms}, volume~64 of {\em Mathematics
  and its Applications (Soviet Series)}.
\newblock Kluwer Academic Publishers Group, Dordrecht, 1991.
\newblock Theory and applications, Translated from the 1987 Russian original by
  W. R. Wade.

\bibitem{Schipp1990ws}
F.~Schipp, W.~R. Wade, and P.~Simon.
\newblock {\em Walsh {S}eries: An {I}ntroduction to {D}yadic {H}armonic
  {A}nalysis}.
\newblock Adam Hilger, Ltd., Bristol, 1990.
\newblock With the collaboration of J. P{\'a}l.

\bibitem{Wahba1990smo}
Grace Wahba.
\newblock {\em Spline {M}odels for {O}bservational {D}ata}, volume~59 of {\em
  CBMS-NSF Regional Conference Series in Applied Mathematics}.
\newblock Society for Industrial and Applied Mathematics (SIAM), Philadelphia,
  PA, 1990.

\bibitem{Walsh1923csn}
J.~L. Walsh.
\newblock A closed set of normal orthogonal functions.
\newblock {\em Amer. J. Math.}, 45(1):5--24, 1923.

\bibitem{Yoshiki}
Takehito Yoshiki.
\newblock Bounds on {W}alsh coefficients by dyadic difference and a new
  {K}oksma-{H}lawka type inequality for {Q}uasi-{M}onte {C}arlo integration.
\newblock in preparation.

\end{thebibliography}

\end{document}